\lstdefinelanguage{Sage}[]{Python}
{morekeywords={False,sage,True},sensitive=true}
\definecolor{dblackcolor}{rgb}{0.0,0.0,0.0}
\definecolor{dbluecolor}{rgb}{0.01,0.02,0.7}
\definecolor{dgreencolor}{rgb}{0.2,0.4,0.0}
\definecolor{dgraycolor}{rgb}{0.30,0.3,0.30}
\DeclareMathOperator{\dlog}{dlog}
\newcommand{\Cpoo}{C_{p^\infty}}
\newcommand{\subconj}{\precsim}
\newcommand{\trans}{\pitchfork}
\newcommand{\Tamb}{\mathrm{Tamb}}
\DeclareSymbolFontAlphabet{\mathbb}{AMSb} 
\DeclareSymbolFontAlphabet{\mathbbl}{bbold}
\newcommand{\prism}{{\mathlarger{\mathbbl{\Delta}}}}
\newcommand{\>}{\right\rangle}
\newcommand{\psr}[1]{[\![#1]\!]}
\newcommand{\lf}{\left\lfloor}
\newcommand{\rf}{\right\rfloor}
\newcommand{\RU}{\mathit{RU}}
\newcommand{\m}{\underline}
\newcommand{\ol}{\overline}
\newcommand{\mpi}{\m\pi}
\newcommand{\blue}[1]{\textcolor{blue}{#1}}
\newcommand{\red}[1]{\textcolor{red}{#1}}
\newcommand{\green}[1]{\textcolor{Green}{#1}}
\newcommand{\gray}[1]{\textcolor{black!30}{#1}}
\newcommand{\id}{\mathrm{id}}
\renewcommand{\~}{\widetilde}
\newcommand{\defeq}{\mathrel{:=}}
\newcommand{\tops}{\texorpdfstring}
\DeclareMathOperator{\res}{res}
\DeclareMathOperator{\tr}{tr}
\newcommand{\can}{\mathrm{can}}
\newcommand{\pullback}{\ar@{}[dr]|<<{\lrcorner}}
\newcommand{\pushout}{\ar@<2pt>@{}[ul]|<{\mbox{\Huge$\ulcorner$}}}
\newcommand{\adjnctn}[4]{\xymatrix@1{
  #1 \ar@<1ex>[r]^-{#3} \ar@{}[r]|-{\bot} & #2 \ar@<1ex>[l]^-{#4}
}}
\newcommand{\mz}[1]{(\!(#1)\!)}
\newcommand{\mup}[1]{\ar@/_1em/[u]_-{#1}}
\newcommand{\mdown}[1]{\ar@/_1em/[d]_-{#1}}
\newcommand{\mt}{\ar@{-}[d]}
\newcommand{\CRing}{\mathrm{CRing}}
\newcommand{\E}{\mathbf E}
\newcommand{\F}{\mathbf F}
\newcommand{\cF}{\mathcal F}
\newcommand{\G}{\mathbf G}
\renewcommand{\H}{\mathrm H}
\newcommand{\bfH}{\mathbf H}
\newcommand{\N}{\mathbf N}
\renewcommand{\O}{\mathcal O}
\renewcommand{\P}{\mathcal P}
\newcommand{\bfP}{\mathbf P}
\newcommand{\Q}{\mathbf Q}
\let\sec=\S
\renewcommand{\S}{\mathbf S}
\newcommand{\T}{\mathbf T}
\newcommand{\W}{\mathbf W}
\newcommand{\Z}{\mathbf Z}
\newcommand{\BK}{\mathfrak S}
\newcommand{\WCart}{\mathrm{WCart}}
\newcommand{\g}{d}
\newcommand{\Ainf}{A_{\mathrm{inf}}}
\newcommand{\sfS}{\mathsf S}
\newcommand{\sfT}{\mathsf T}
\renewcommand{\hat}{\widehat}
\newcommand{\Fil}{\mathrm{Fil}}
\newcommand{\tnsr}{\otimes}
\newcommand{\ctnsr}{\mathbin{\widehat\otimes}}
\DeclareMathOperator{\Hom}{Hom}
\DeclareMathOperator{\End}{End}
\DeclareMathOperator{\Aut}{Aut}
\DeclareMathOperator{\Tor}{Tor}
\DeclareMathOperator{\Spf}{Spf}
\DeclareMathOperator{\THH}{THH}
\DeclareMathOperator{\TC}{TC}
\newcommand{\TCmin}{\TC^-}
\DeclareMathOperator{\TF}{TF}
\DeclareMathOperator{\TP}{TP}
\DeclareMathOperator{\TR}{TR}
\newcommand{\cont}{\mathrm{cont}}
\DeclareMathOperator{\im}{im}
\newcommand{\morph}{\mathop{\longrightarrow}\limits}
\newcommand{\xra}{\xrightarrow}
\renewcommand{\lim}{\mathop{\operatorname*{lim}\limits_{\longleftarrow}}\limits}
\newcommand{\lcm}{\mathop{\operatorname{lcm}}\limits}
\newcommand{\mono}{\hookrightarrow}
\newcommand{\iso}{\overset\sim\longrightarrow}
\newcommand{\isom}{\cong}
\newtheorem{theorem}{Theorem}[section]
\newtheorem*{theorem*}{Theorem}
\newtheorem{lemma}[theorem]{Lemma}
\newtheorem{corollary}[theorem]{Corollary}
\newtheorem{proposition}[theorem]{Proposition}
\theoremstyle{definition}
\newtheorem{definition}[theorem]{Definition}
\newtheorem{conjecture}[theorem]{Conjecture}
\newtheorem{remark}[theorem]{Remark}
\newtheorem{example}[theorem]{Example}
\newtheorem{warning}[theorem]{Warning}
\newtheorem{question}[theorem]{Question}
\newtheorem{construction}[theorem]{Construction}
\providecommand\@dotsep{5}
\renewcommand{\listoftodos}[1][\@todonotes@todolistname]{%
  \@starttoc{tdo}{#1}}
\begin{document}

\title{
  Prisms and Tambara functors I:\\
  Twisted powers, transversality, and the perfect sandwich
}

\author[Y.~J.~F.~Sulyma]{Yuri~J.~F. Sulyma}
\email{yuri.sulyma@protonmail.com}

\begin{abstract}
  We construct a faithful and conservative functor from prisms to $C_{p^\infty}$-Tambara functors; in appropriate situations, this gives an algebraic description of $\underline\pi_0{\mathrm{TC}^-}$. We also present two integral variants using the generalized $n$-series of Devalapurkar-Misterka. The construction is based on the ``twisted $I$-adic'' or ``$(p^\bullet)_q$'' filtration, and is closely related to $q$-divided powers. To verify the axioms, we introduce a new technique for constructing Tambara functors, inspired by transversal prisms. We apply this to give a conceptual construction of Molokov's de Rham-Witt comparison map, and generalize it to a triangle sandwiching prismatic theory between theories built from Witt vectors and adjunction of $p$-power roots.
\end{abstract}

\maketitle
\tableofcontents

\section{Introduction}
\label{sec:intro}

This paper concerns the interaction between arithmetic geometry and equivariant algebra. We will explain what we are doing first from an arithmetic perspective, then from the equivariant perspective. The necessary background can be found in \cite[Part 1]{SulSliceTHH} as well as \sec\ref{sec:background}.

\subsection{Arithmetic explanation}

A prism $(A, I)$ comes with many interesting filtrations. In the mainstream approach to prismatic cohomology, the emphasis is on the $I$-adic filtration and its Frobenius pullback, the Nygaard filtration. In this paper, we study the ``twisted $I$-adic filtration'', which is the filtration by the ideals
\[ I_n \defeq I\phi(I)\dotsm\phi^n(I). \]
Over the $q$-de Rham prism, this amounts to looking at $(p^i)_q$ rather than $(p)_q^i$. This filtration is used, for example, in defining Breuil-Kisin twists and the prismatic logarithm \cite[\sec2]{APC}.

We assert that this filtration is best understood through the lens of equivariant algebra (meaning $\pi_0$ of equivariant homotopy theory). Specifically, we show that the quotients $A/I_n$ fit together into a \emph{Tambara functor} (for the group $G=\Cpoo$). We review Tambara functors in \sec\ref{sec:background}; for now, let us just say that
\begin{itemize}
   \item a $G$-Tambara functor $\m A$ consists of commutative rings indexed by the subgroups $H\le G$, written $\m A(G/H)$ or $A^H$, along with various maps between them;
   \item Tambara functors are the equivariant analogue of commutative rings;
   \item free Tambara functors (in a suitable sense) are given by Witt vectors. 
\end{itemize}

The Tambara functor structure on Witt vectors amounts to the $F$, $V$, $N$ maps
\[\xymatrix{
   W_{n-1}(R) \ar@<1ex>[r]^-V \ar@<-1ex>[r]_-N & W_n(R) \ar[l]|-F
}\]
given in ghost components\footnote{We index our Witt vectors such that $W_n=\W_{p^n}$, i.e.\ starting from $0$, e.g.\ $W_1(\F_p)=\Z/p^2$.} by
\begin{align*}
   F(w_0,\dotsc,w_n) &= (w_1,\dotsc,w_n)\\
   V(w_0,\dotsc,w_{n-1}) &= (0, pw_0, \dotsc, pw_{n-1})\\
   N(w_0,\dotsc,w_{n-1}) &= (w_0, w_0^p, \dotsc, w_{n-1}^p)
\end{align*}
The $N$ map is less well-known than the $F$ and $V$ maps; it was originally described by Angeltveit \cite{AngeltveitNorm}. Let us note that $N$ satisfies
\begin{align*}
   FN(x) &= x^p\\
   N(x) &= x \bmod V,
\end{align*}
which characterize the Norm in ghost components, and that $N^n$ is the familiar multiplicative lift from $W_0$ to $W_n$. Like $F$ and $V$, $N$ exists as a map $N\colon W(R)\to W(R)$, where it is given by
\[ N(x) = x - V\delta(x). \]
This formula is due to Borger \cite{AngeltveitNorm}.

When $(A, I)$ is a perfect prism, corresponding to a perfectoid ring $R=A/I$, the Tambara functor $\m A$ we associate\footnote{Warning: in the literature, $\m A$ is usually used to denote the Burnside Mackey functor. We denote this by $\m W(\Z)$.} to $A$ is the Witt vectors Tambara functor $\m W(R)$. To extend this to general prisms, we translate the above formulas for $F$, $V$, and $N$ from $W(R)$ to $\Ainf(R)$, and show that the resulting formulas work for all prisms\footnote{We sketched this construction at the end of \cite[\sec3.3]{SulSliceTHH}, but we did not have the technology then to verify the Tambara axioms, nor to extend the construction to non-orientable prisms.}. The ``$F$'' and ``$V$'' parts of this story are more-or-less well-known to experts; the point is really the $N$ maps, which are more difficult to construct.

To understand the meaning of the $N$ maps, we turn to a variant of our construction. Consider the ring $D=\hat\Z\psr{q-1}$; this is a $\lambda$-ring with Frobenius lifts $\psi^n(f) = f(q^n)$, and contains the elements $\mu=q-1$ as well as the $q$-analogues\footnote{The notation $[n]_q$ is more common. We follow \cite{GLQqCrys} in using $(n)_q$ so that $f^{[n]_q}$ can be used for a $q$-divided power.}
\[ (n)_q \defeq \frac{q^n-1}{q-1}=\frac{\psi^n(\mu)}\mu. \]
We will study the structure of the quotients:
\begin{align*}
   \m D^-(G/C_n) &= D/\psi^n(\mu) = D/(q^n-1)\\
   \m D(G/C_n) &= D/(n)_q
\end{align*}
We show that $\m D^-$ forms a Tambara functor for $G=\Q/\Z$, and that $\m D$ is almost a Tambara functor. From the $q$-perspective, what we are doing is making sense of the symbol (called a \emph{$q$-twisted power} in \cite{GLQqCrys})
\[ f^{(m)_{q^n}} \]
for $m,n\in\N$. Essentially, we show it can be interpreted as a special case of the symbol $f^{C_{mn}/C_n}$, which already has a meaning in equivariant algebra\footnote{To avoid confusion, we note that this is not literally true with our choice of notation: for us, $f^{C_{mn}/C_n}$ is an operation on rings with $C_{mn}$-action, while $N^{mn}_n$ is a genuine analogue of this operation. We are saying that $f^{(m)_{q^n}}$ is a special case of $N^{mn}_n(f)$; since our actions are trivial, $f^{C_{mn}/C_n}$ in this case would be just $f^m$.}.

We note there are some existing candidates for this expression: 
\begin{itemize}
   \item for any $f$, the expression
   \[ \~N_1^p(f)\defeq\phi(f)-(p)_q\delta(f) \]
   is a good candidate for $f^{(p)_q}$. More generally,
   \[ \~N_{p^{n-1}}^{p^n}(f)\defeq\phi(f)-(p)_{q^{p^n}}\delta(f) \]
   is a good candidate for $f^{(p)_{q^{p^n}}}$.

   \item if $x$ and $y$ are of rank one (meaning $\delta(x)=\delta(y)=0$), then
   \[ (x-y)^{(m)_q} \defeq \prod_{i=0}^{m-1} (x-q^i y) \]
   is a good candidate for the left-hand side. More generally,
   \[ (x-y)^{(m)_{q^n}} \defeq \prod_{i=0}^{m-1} (x-q^{ni} y) \]
   is a good candidate for the left-hand side.
\end{itemize}
These options are not entirely satisfactory:
\begin{itemize}
   \item $(x-y)^{(m)_{q^n}}$ depends on both $x$ and $y$, not just on $x-y$;
   \item for $x$ and $y$ of rank one, $\~N_1^p(x-y)$ and $(x-y)^{(p)_q}$ are generally different;
   \item the second formula only works for differences of elements of rank one;
   \item it is not known how to extend the first formula to $f^{(m)_{q^n}}$ for general values of $m$ and $n$.
\end{itemize}
We offer an operation $N^{mn}_n$ with the following properties:
\begin{itemize}
   \item it is a multiplicative map
   \[ N^{mn}_n\colon D/(q^n-1) \to D/(q^{mn}-1); \]

   \item $N^{mn}_n(f)$ ``interpolates'' from $f^m$ to $\psi^m(f)$ via the various $\psi^g(f)^{m/g}$, and is defined by this property;

   \item for $x$ and $y$ of rank one, $(x-y)^{(m)_{q^n}}$ is a \emph{lift} of $N^{mn}_n(x-y)$ (Theorem \ref{thm:s-lift}). In particular, $(x-y)^{(m)_{q^n}}\bmod (q^{mn}-1)$ depends only on $(x-y)\bmod (q^n-1)$;

   \item for any $f$, $\~N_1^p(f)$ is a \emph{lift} of $N^p_1(f)$. We also give conjectural formulas for an operation $\~N^{mn}_n$ lifting $N^{mn}_n$ for all $m,n$, generalizing the expression for $\~N^p_1$ above (Theorem \ref{thm:vartheta-lift}). We prove the conjecture when $m$ is a power of a prime or the product of two primes (Proposition \ref{prop:vartheta-pq}).
\end{itemize}

This construction can be applied more generally to $q$-pd thickenings \cite[Definition 16.2]{Prismatic} (which we define integral versions of). For a $q$-pd thickening $(D,I)$, our norm $N^p_1$ is given by
\begin{align*}
   N^p_1\colon D/I &\to D/(p)_q I\\
   N^p_1(f) &= \phi(f) - (p)_q\delta(f)
\end{align*}
The well-definedness of this map is part of the \emph{definition} of $q$-pd thickening \cite[Definition 3.1]{GLQqCrys} (see also the Remarks following that Definition for a comparison with \cite[Definition 16.2]{Prismatic}). One can also replace $q$-analogues with any generalized $n$-series (GNS) in the sense of Devalapurkar-Misterka \cite{GNS} satisfying the assumptions of the $s$-Lucas theorem \cite[Theorem 2.4.2]{GNS}.

\begin{question}
   What can the lifts $\~N^{mn}_n$ or $(x-y)^{(m)_{q^n}}$ do that the norm $N^{mn}_n$ cannot? Are they simply extraneous data? Or do they play a similar role to Frobenius lifts?
\end{question}

The object $\m D$ is obtained from $\m D^-$ by killing the value of $\m D^-$ at the trivial subgroup, $D/\mu$, \emph{as a Mackey functor} (= equivariant abelian group). This process does not preserve norm maps: that is, the Norm generally does not descend to a map as follows:
   \[\xymatrix{
      D/(q^n-1) \ar[d] \ar[r]^-{N^{mn}_n} & D/(q^{mn}-1) \ar[d] \\
      D/(n)_q \ar@{..>}[r]_-{N^{mn}_n}|-\times & D/(mn)_q
   }\]
   Concretely, $N^6_2$ does not descend to a map $D/(2)_q\to D/(6)_q$. Therefore, the quotients $D/(n)_q$ do not form a Tambara functor.

   Blumberg and Hill introduced \emph{incomplete} Tambara functors to handle situations like this \cite{IncompleteTambara}. The only Norms that survive the quotienting procedure are $N^{p^{m+n}}_{p^n}$. However, $N^6_2$ \emph{does} descend to a map
   \[ N^6_2\colon D/(2)_q \to D/(6)_q[(3)_q^{-1}], \]
   and the existing framework of incomplete Tambara functors does not know about this map. In future work, we intend to give a more permissive framework that captures the structure formed by the $D/(n)_q$.

\begin{remark}
   Here is a rough legend of how arithmetic information is organized in the Tambara formalism.
   \begin{itemize}
      \item for the $\m A$ construction, the Mackey functor axiom $FV=p$ is equivalent to the ``prism condition'' $p\in I+\phi(I)A$. For the $\m D^-$ construction, this axiom amounts to $(n)_{q^m} \equiv n \bmod \psi^m(\mu)$. Note that $(D,\mu)$ is not a prism.

      \item since $\m\W(\Z)$ is the initial Tambara functor, $A/I\phi(I)$ and $D^-/(q^p-1)$ are $W_1(\Z)=\Z[x]/(x^2-px)$-algebras. This captures the congruences $\phi(I)^2 \equiv p\phi(I) \bmod I_1$ and $(p)_q^2 \equiv p(p)_q \bmod (q^p-1)$.

      \item consider the operation $\~N^p_1(f) = \phi(f) - (p)_q\delta(f)$ for $A=(\Z_p[q^{1/p^\infty}]^\wedge_{(p,q-1)}, (p)_{q^{1/p}})$, or any prism under it. The axioms of Tambara functors require that
      \[
         \~N^p_1(f) \equiv
         \begin{cases}
            f^p & \bmod (p)_{q^{1/p}}\\
            \phi(f) & \bmod (p)_q
         \end{cases}
      \]
      However, $\m A$ satisfies a stronger version of this congruence than a generic Tambara functor would:
      \[
         \~N^p_1(f) \equiv
         \begin{cases}
            f^p & \bmod (p)_{q^{1/p}}^p\\
            \phi(f) & \bmod (p)_q
         \end{cases}
      \]
      We refer to this phenomenon as \emph{refraction}, and say that $\m A$ is a \emph{refractive} Tambara functor. The exact definition of refractive is still in progress, but note that $\m D^-$ is not refractive, since $\deg(q-1)^p>\deg(p)_q$.

      In particular, we see that $\~N^p_1(f)$ is divisible by $(p^2)_{q^{1/p}}!$ whenever $f$ is divisible by $(p)_{q^{1/p}}$. This is essentially the ``fundamental lemma of $q$-crystalline cohomology'' \cite[Lemma 16.7]{Prismatic}.

      \item we advertise that equivariant homotopy theory can also produce the filtration
      \[ \Fil^i D = (pi)_q! D; \]
      see Figure \ref{fig:filtrations}. This is the main result of \cite{SulSliceTHH}, which studies an equivariant filtration called the \emph{slice filtration} on $\THH$ of perfectoid rings. (Since we have not yet globalized the results of that paper, this part is a bit impressionistic.) The slice filtration is the minimal filtration such that the norms $N^{mn}_n(f)$, i.e.\ $f\mapsto f^{(m)_{q^n}}$, scale slice filtration by $m$, just as ordinary powers $f^m$ scale Postnikov filtration by $m$. This additional scaling property of $N^{mn}_n$ seems to correspond to a lemma used for the convergence of the $q$-logarithm/prismatic logarithm \cite[Proposition 4.9]{AClBTrace}.
   \end{itemize}
\end{remark}

\begin{figure}
   \label{fig:filtrations}
   \begin{tabular}{lcc} 
      Name & $\Fil^i \Z_p\psr{q-1}$ & $\Fil^i A$\\\hline
   $I$-adic & $(p)_q^i$ & $I^i$\\
      twisted $I$-adic & $(p^i)_q$ & $I\phi(I)\dotsm\phi^{i-1}(I)$\\
      factorial & $(pi)_q!$ & $\prod_{k=0}^\infty \phi^k(I)^{\lf i/p^k\rf}$
   \end{tabular}
   \caption{Filtrations on the $q$-de Rham and general prisms}
\end{figure}

While this is all very conceptually appealing, it does not yet connect very concretely to existing work in prismatic cohomology. Our main application, which should give a sense of what this perspective is good for, is to de Rham-Witt theory. Molokov has constructed a natural comparison map
\[ W_m(A/I) \to A/I_m \]
for any prism $(A,I)$. Since Witt vectors are free Tambara functors, our results immediately give a new construction of this map, and characterize it by a universal property. In fact, we generalize this map in two ways: we show that there are maps
\begin{equation}
   \label{eq:sandwich}
   W_m(A/I_n) \to A/I_{m+n} \to W_m(A/\phi^m(I_n))
\end{equation}
for all $m,n\ge0$. We refer to this diagram as the \emph{perfect sandwich} since the outer terms are related to inverse and direct perfection, respectively. The second map here is induced by the map $A\to W(A)$ encoding the $\delta$-structure on $A$, so this triangle plays two different universal properties of Witt vectors off of each other.

The author has repeatedly claimed that the composite in \eqref{eq:sandwich} is $W_m(\phi^m)$. At the eleventh hour, we realized that this is false for $n>0$. Write the composite as $W_m(\phi^m) + \varepsilon_m$ for some correction term $\varepsilon_m$. We show that $\varepsilon_m$---which by definition is a difference of ring homomorphisms---vanishes when $n=0$, in its last coordinate, and on the image of $A\to W(A)\to W_m(A/I_n)$ (Theorem \ref{thm:epsilon-vanish}). This last property tells us that $\varepsilon_m$ measures the difference between $\delta_{W(-)}$ and $\delta_A$. We believe this $\varepsilon_m$ deserves further study.

\begin{remark}
   The second map also gives a concrete description of the quotients $A/I_n$: for example, over the Breuil-Kisin prism we have
\[ \BK/I_2 = \{(w_0,w_1,w_2)\in W_2(\O_K[\pi^{1/p^2}]) \mid w_2\in \O_K,\, w_1\in\O_K[\pi^{1/p}]\}. \]
We interpret this as saying that, compared to the perfectoid theory which adds infinitely much ramification, the prismatic theory adds ``just the right amount'' of ramification.
\end{remark}

\subsection{Equivariant explanation}

We now turn to the equivariant perspective. In many cases (but not all \cite{LurieMOPrisms}), relative prismatic cohomology over $A$ is related to relative topological Hochschild homology $\THH(-/\S_A)$ relative to a spherical lift $\S_A$ of $A$. When $A$ is a perfect prism, we can take $\S_A = W^+(A/p)$, where $W^+$ is the spherical Witt vectors (\cite[Example 5.2.7]{Ell2}, \cite[\sec2.1]{ChromaticNullstellensatz}, \cite{AntieauSWitt}); for the Breuil-Kisin prism $\BK=W(k)\psr z$, we can take $\S_\BK=W^+(k)\psr z$. Topological negative cyclic homology $\TCmin(-/\S_A)$ arises as the $\T$-fixed points of a genuine equivariant commutative ring spectrum
\[ T^h(-/\S_A)=T(-/\S_A)^{E\T_+}. \]
What we are doing is providing an algebraic description of $\mpi_0 T^h(\bar A/\S_A)$; we will abusively denote this by $\mpi_0{\TCmin(\bar A/\S_A)}$. However, we do not verify that our construction has the desired properties\footnote{It is known to work for perfect prisms, which follows by Angeltveit's work \cite{AngeltveitNorm}. It may be possible to reduce to this case, but functoriality of spherical lifts necessary to make this argument work is not yet known.}. We can do something similar for $\mpi_0{\TP}$, but the result is now an \emph{incomplete} Tambara functor \cite{IncompleteTambara}.

Since we generally do not have a cyclotomic lift available, we have to construct our Tambara functor by hand. This poses a technical challenge: while \emph{Mackey} functors can be constructed by hand, this is generally not feasible for Tambara functors. The reason is that the ``Tambara reciprocity'' identities which tell us how to move a norm past a sum or transfer are difficult even to generate, let alone verify. To get around this, we introduce \emph{transversal} Mackey and Tambara functors, inspired by transversal prisms.

The idea is as follows. Viewing a Mackey functor $\m M$ as a $G$-spectrum, we can reconstruct $\m M$ from its geometric fixed-point spectra $\{\Phi^H\m M\}_{H\le G}$ by work of Glasman and Ayala--Mazel-Gee--Rozenblyum \cite{Glasman,AMGRStratified}. \emph{Transversal} Mackey functors are essentially those $\m M$ which can be reconstructed from $\{\mpi_0\Phi^H\m M\}_{H\le G}$; in other words, they support a version of the stratified story that stays in the world of Mackey functors. This makes it feasible to construct norm maps (and verify Tambara reciprocity) by hand. 

\begin{example}
   A $C_p$-Tambara functor is defined to be transversal when the diagram
   \[\xymatrix{
      A^{C_p} \ar[r] \ar[d] & A^{\Phi C_p} \ar[d]\\
      \H^0(A^e) \ar[r] & \hat\H^0(A^e)
   }\]
   is a pullback, where $A^{\Phi C_p}\defeq A^{C_p}/\tr(A^e)$. A transversal $C_p$-Tambara functor is thus equivalent to the following data:
   \begin{itemize}
      \item a ring with $C_p$-action $A=A^e\in\CRing^{BC_p}$;
      \item a plain ring $B=A^{\Phi C_p}\in\CRing$;
      \item a factorization (via ring maps)
      \[\xymatrix{
         & B \ar@{..>}[d]\\
         A \ar@{..>}[ur]^-\phi \ar[r]_-{x^{C_p/e}} & \hat\H^0(C_p, A)
      }\]
      of $A$'s ``Tate-valued Frobenius''\footnote{This version of the Tate-valued Frobenius uses the $C_p$-action on $A$; see \cite[Remark IV.1.13]{NikolausScholze}.} through $B$. The map $\phi$ must be $C_p$-equivariant with respect to the trivial action on $B$ (this is automatic if $B\to\hat\H^0(A)$ is injective, but that is usually not the case).
      \end{itemize}
      Given this data, we define $A^{C_p}$ by pullback as above, and then $V$ and $N$ are induced by
      \[
         \vcenter{\xymatrix{
            A \ar[r]^-0 \ar[d]_-{C_p/e\cdot x} & B \ar[d]\\
            \H^0(A) \ar[r] & \hat\H^0(A)
         }}
         \quad\text{and}\quad
         \vcenter{\xymatrix{
            A \ar[r]^-\phi \ar[d]_-{x^{C_p/e}} & B \ar[d]\\
            \H^0(A) \ar[r] & \hat\H^0(A)
         }}
      \]
      respectively. This should be compared with the recent work of Yang \cite{NormedEooRingsCp}, which proves a similar characterization of $C_p$-$\E_\infty$-ring \emph{spectra}.

      The example to keep in mind is $A^{C_p}=W_1(A)$ is the Witt vectors of a $p$-torsionfree ring $A$ with trivial $C_p$-action, in which case $A=B$ and $\phi$ is the identity.
\end{example}

In this paper, we employ this technique in an ad hoc way. We hope to develop the theory of transversal Tambara functors systematically in future work.

For $n=0$, the first map in the perfect sandwich \eqref{eq:sandwich} is the map
\begin{align*}
   W_n(A/I) &= \pi_0\THH(A/I)^{C_{p^n}}\\
   &\isom \pi_0\THH(A/I|\S_A)^{C_{p^n}}\\
   &\to \pi_0\THH(A/I|\S_A)^{hC_{p^n}}\\
   &= A/I_n
\end{align*}
from genuine to homotopy fixed points. Taking the limit over $F$, we obtain the map
\[ W((A/I)^\flat) = \pi_0\TF(A/I|\S_A) \to \pi_0\TCmin(A/I|\S_A) = A \]
from $\TF$ to $\TCmin$. For example, over the Breuil-Kisin prism this is the inclusion
\[ W(\O_K^\flat) = W(k)\mono W(k)\psr z = \BK. \]
The second map in the perfect sandwich is somehow related to $\TR$, but we do not yet know the precise statement here. Thus, the perfect sandwich is relating two flavors of equivariant homotopy: one indexed on subgroups of compact Lie groups ($\TF$), the other indexed on quotients of profinite groups ($\TR$).

Of course, the most exciting aspect of our work is the possibility of contemplating $G$-prisms and $G$-prismatic cohomology for more general $G$. In order to do this, we need to identify the image of the functor $A\mapsto\m A$, as well as the additional structure on $\m A$ needed to reconstruct the $\delta$-structure on $A$. These turn out to be very difficult, and will be the subject of a sequel joint with Alex Frederick. We do not take a position on how generally $G$-prismatic cohomology can be defined, but are confident that $G$-crystalline cohomology can be defined for any $G$ (and should recover $q$-crystalline cohomology when $G=\Q/\Z$).

\subsection{Notation}
\label{sub:notation}

If $A$ is a ring with $G$-action, and $g\in G$, $x\in A$, we write either $g\cdot x$ or $x^g$ to denote the action of $g$ on $x$. If $K\le H$ and $x\in A^K$, we define
\begin{align*}
   H/K\cdot x &= \sum_{hK\in H/K} h\cdot x\\
   x^{H/K} &= \prod_{hK\in H/K} x^h
\end{align*}

We write $\phi$ for the Frobenius on a prism, and $\psi^m$ for the Frobenii on a $\lambda$-ring. Similar to the $x^h$ notation, we sometimes write $f^\phi$ or $f^{\psi^m}$ instead of $\phi(f)$ or $\psi^m(f)$.

Since the author is by now fully $W$-pilled, we will write $F$ and $V$ for the restriction and transfer in an arbitrary Tambara functor. In the $p$-typical case, $S^n_k$ means $S^{C_{p^n}}_{C_{p^k}}$, for $S\in\{F,V,N\}$. In the integral case, $S^n_k$ means $S^{C_n}_{C_k}$.

We continue to use the notation $\mz{x}\defeq\max(x,0)$ from \cite{SulSlopes}. We also make use of the Iverson bracket: for a proposition $P$,
\[
   [P] \defeq
   \begin{cases}
     1 & P\text{ is true}\\
     0 & P\text{ is false}
   \end{cases}
\]

\subsection{Overview}
We introduce Tambara functors and establish notation in \sec\ref{sec:background}. Our construction for prisms is given in \sec\ref{sec:main}. We study the ``perfect sandwich'' in \sec\ref{sec:drw}. Finally, we give the construction for $q$-pd thickenings/GNS's in \sec\ref{sec:gns}.

\subsection{Acknowledgments}
The author would like to thank Ben Antieau, Andrew Blumberg, Shachar Carmeli, Alex Frederick, Zhouhang Mao, Maxime Ramzi, and Noah Riggenbach for interesting and helpful conversations related to this work. We also thank Bhargav Bhatt for sharing an example of a non-orientable prism, Sanath Devalapurkar for answering questions about \cite{GNS}, and Jay Shah for comments on v1 of this paper. We especially thank Andrew Blumberg for encouraging us to split this project into two papers.

\section{Tambara functors}
\label{sec:background}

In this section we explain the notion of Tambara functors. We assume the reader is familiar with Mackey functors and prisms as covered in \cite[Part 1]{SulSliceTHH}. Our goal here is just to give the intuition and review the facts we need. A full treatment is given in \cite{HillHandbook} and \cite{IncompleteTambara}. We also recommend, and will rely on, \cite{MazurTambara} for an explicit description in the case of cyclic groups.

We informally define Tambara functors and survey relevant examples in \sec\ref{sub:idea-examples}. We give a more precise account of the axioms in \sec\ref{sub:axioms}.

\subsection{Idea and examples}
\label{sub:idea-examples}

If $A$ is a ring with $G$-action, we may consider the fixed-rings $A^H$, as $H$ ranges over subgroups of $G$. There is a great deal of structure relating these: if $K\le H$, then we have

\begin{itemize}
   \item a residual action of the Weyl group $W_H(K) = \frac{N_H(K)}K = \Aut_G(H/K)$ on $A^K$;
   \item a restriction map $\res^H_K\colon A^H \to A^K$ given by inclusion of fixed points;
   \item a transfer map $\tr^H_K\colon A^K \to A^H$ given by summing over conjugates, $\tr^H_K(x) = H/K \cdot x$;
   \item a norm map $N^H_K\colon A^K \to A^H$ given by multiplying over conjugates, $N^H_K(x) = x^{H/K}$.
\end{itemize}

There are also isomorphisms $g\colon A^H \iso A^{gHg^{-1}}$, so some version of the above also works when $K$ is merely subconjugate to $H$ (which we denote by $K\subconj H$). We will mostly ignore these in our discussion since ultimately we are only interested in $C_n$.

A $G$-Tambara functor is a collection of rings $A^H$ indexed by the (conjugacy classes of) subgroups of $G$, along with maps $\res^H_K$, $\tr^H_K$, $N^H_K$ connecting them in the same way as above. We will give more precise axioms as we need them; the important parts are
\begin{align*}
   \res^H_K \tr^H_K(x) &= H/K\cdot x\\
   \res^H_K N^H_K(x) &= x^{H/K}
\end{align*}
and that
\begin{align*}
   N^H_K\colon A^K \to{} &A^K/\P_K\\
   &\defeq A^K/\{\im\tr^K_J \mid J\le K\}
\end{align*}
is a ring homomorphism. Just as Mackey functors play the role of abelian groups in equivariant algebra, Tambara functors play the role of commutative rings. Mackey functors and Tambara functors are denoted with underlines, e.g.\ $\m A$ and $\m M$. As mentioned in \sec\ref{sub:notation}, we will often write $F^H_K$ instead of $\res^H_K$ and $V^H_K$ instead of $\tr^H_K$.

In the literature this data is often depicted by a Lewis diagram as follows:
\[\xymatrix{
   A^H \ar[d]_-{\res}\\
   A^K \ar@/^2em/[u]^-{N^H_K} \ar@/_2em/[u]_-{\tr^H_K} \ar@(dr,dl)[]^-{W_H(K)}
}\]

Since this gets too busy (and our Weyl actions will all be trivial) we will just write
\[\xymatrix{
   A^H \ar@{-}[d]\\
   A^K
}\]

\begin{warning}
   Many sources in the literature call the map
   \[ X_{hC_p} \to X^{hC_p} \]
   the ``norm'', but it is really a version of the \emph{trace}. (Here $X$ is a spectrum with $C_p$-action.)
\end{warning}

In our case, $G$ is the ind-finite group $\Cpoo=\Q_p/\Z_p$ or $\bigcup C_n = \Q/\Z$. We will mainly be interested in Tambara functors in which all restriction maps are surjective (which forces the Weyl actions to be trivial). These are very far from the motivating example coming from rings with $G$-action; more relevant examples for us are:

\begin{example}[Witt vectors]
   \label{ex:witt}
   If $A$ is a ring with $C_{p^2}$-action, we may define Witt vectors $W_2(A)$ via the ghost polynomials
   \begin{align*}
      w_0 &= a_0\\
      w_1 &= a_0^{C_p/e} + C_p/e\cdot a_1\\
      w_2 &= a_0^{C_{p^2}/e} + C_{p^2}/C_p\cdot a_1^{C_p/e} + C_{p^2}/e \cdot a_2
   \end{align*}
   When the action on $A$ is trivial, these become the usual ghost polynomials and recover the usual truncated Witt vectors.

   We can do something similar for any (pro)finite group $G$; we denote the resulting ring by $\W_G(A)$. For the profinite groups $\Z_p$, resp.\ $\hat\Z$, this recovers $W(A)$, resp.\ $\W(A)$, in the usual sense (giving $A$ the trivial action) \cite{DSBurnside}. In this paper, we will only be concerned with trivial Weyl actions; however, it is important that the theory naturally has this level of generality.

   As $H$ varies over subgroups of $G$, the rings $\W_H(A)$ form a $G$-Tambara functor, which we denote by $\m\W_G(A)$. When $G=\Z_p$ or $\hat\Z$, we denote this simply by $\m W(A)$ or $\m\W(A)$. When $A$ has trivial action, the formulas for $\m W(A)$ are
   \begin{align*}
      F(w_0,\dotsc,w_n) &= (w_1,\dotsc,w_n)\\
      V(w_0,\dotsc,w_{n-1}) &= (0, pw_0, \dotsc, pw_{n-1})\\
      N(w_0,\dotsc,w_{n-1}) &= (w_0, w_0^p, \dotsc, w_{n-1}^p)
   \end{align*}
   Integrally, when $A$ has trivial action, the formulas for $\m\W(A)$ are
   \begin{align*}
      F^{mn}_n(w)_{k\mid n} &= w_{mk}\\
      V^{mn}_n(w)_{k\mid mn} &= [m\mid k]w_{k/m}\\
      N^{mn}_n(w)_{k\mid mn} &= w_{k/g}^g, \quad g=\gcd(m,k)
   \end{align*}

   Brun \cite{BrunWitt} showed that $\m\W_G(A)$ is the \emph{free} Tambara functor on a ring with $G$-action: that is, $\m\W_G$ is the left adjoint
   \[\adjnctn{\CRing^{BG}}{\Tamb_G}{\m\W_G}{} \]
   For $G=C_{p^2}$, this amounts to the fact that in Witt coordinates,
   \[ (a_0, a_1, a_2) = N^2 a_0 + VN a_1 + V^2 a_2. \]
   Note that $\m\W_G(A)$ is \emph{not} the free Tambara functor on the (plain) ring $A$, which would be $\m\W_G(A^{\tnsr G})$.

   In particular, $\m\W_G(\Z)$ is the free Tambara functor on $\Z$ (even as a plain ring), and thus the initial Tambara functor as well as the tensor unit. The ring $\W_G(\Z)$ is known to agree with the \emph{Burnside ring} of finite $G$-sets \cite{DSBurnside}. For example, we have
   \[ C_p/e \times C_p/e = p\cdot C_p/e \]
   and thus
   \[ \W_{C_p}(\Z) = W_1(\Z) = \Z[x]/(x^2-px). \]
\end{example}

\begin{example}[Representation rings]
  For any finite group $G$, we have the representation ring $\RU(G)$ of complex virtual representations of $G$. A basis of this ring is given by the irreducible representations of $G$. For example, for finite cyclic groups we have $\RU(C_n)=\Z[q]/(q^n-1)$.

  As $H$ varies, the rings $\RU(H)$ assemble into a Tambara functor $\m{\RU}$. (If we restrict $H$ to subgroups of $G$, then this is a $G$-Tambara functor; but it is most naturally a ``global'' Tambara functor \cite{GlobalHomotopy}.) The norm and transfers are given by
   \begin{align*}
      \tr^H_K(V) &= V^{\oplus H/K}\\
      N^H_K(V) &= V^{\tnsr H/K}
   \end{align*}
\end{example}

These examples relate to our constructions as follows. When $A=\Ainf(R)$ is a perfect prism, our associated Tambara functor $\m A$ is simply $\m W(R)$. Our second construction $\m D^-$, built from the quotients $D/(q^n-1)$ of the ring $D=\hat\Z\psr{q-1}$, is the profinite completion of $\m{\RU}$.

\subsection{Axioms}
\label{sub:axioms}
We now explain how to generate the ``Tambara reciprocity'' identities which dictate how to move a norm past a sum or transfer. Let $\gamma$ denote a generator of $C_n$. To lighten notation, we adopt the convention
\[ \overline{a_0\dotsc a_{n-1}} = \prod_{i=0}^{n-1} a_i^{\gamma^i} \]
For example, $\overline{x^2y} = x^{\gamma^0} x^{\gamma^1} y^{\gamma^2}$ and $\overline{xyx} = x^{\gamma^0} y^{\gamma^1} x^{\gamma^2}$.

Suppose $K\le H$. Officially, the identities for $N^H_K(x+y)$ come from studying the orbit decomposition of the $G$-set
\[ \P(H/K) = \Hom(H/K, *\amalg *), \]
with a similar story for $N^H_K \tr^K_J$. We show how to do this using elementary formal manipulations.

\begin{example}
   We formally have
   \begin{align*}
      (C_2/e\cdot f)^{C_4/C_2}
         &= (f^{\gamma^0} + f^{\gamma^2})^{\gamma^0 + \gamma^1}\\
         &= (f^{\gamma^0} + f^{\gamma^2})(f^{\gamma^1} + f^{\gamma^3})\\
         &= \sum
         \begin{pmatrix}
            f^{\gamma^0} f^{\gamma^1} & f^{\gamma^0} f^{\gamma^3}\\
            f^{\gamma^2} f^{\gamma^1} & f^{\gamma^2} f^{\gamma^1} 
         \end{pmatrix}\\
         &= C_4/e\cdot(f^{\gamma^0} f^{\gamma^1})
   \end{align*}
   Thus, for a general $C_4$-Tambara functor, the Tambara reciprocity identity for transfers is
   \[ N^4_2 V^2_1(f) = V^4_1(\overline{f^2}). \]
\end{example}

\begin{example}
   We formally have
   \begin{align*}
      (x+y)^{C_4/e}
         &= (x+y)^{\gamma^0 + \gamma^1 + \gamma^2 + \gamma^3}\\
         &= \prod_{i=0}^3 (x^{\gamma^i} + y^{\gamma^i})\\
         &= \sum\begin{pmatrix}
               \ol{x^4} & \blue{\ol{x^3y}} & \blue{\ol{x^2yx}} & \red{\ol{x^2y^2}}\\[.5em]
               \blue{\ol{xyx^2}} & \ol{xyxy} & \red{\ol{xy^2x}} & \green{\ol{xy^3}}\\[.5em]
               \blue{\ol{yx^3}} & \red{\ol{yx^2y}} & \ol{yxyx} & \green{\ol{yxy^2}}\\[.5em]
               \red{\ol{y^2x^2}} & \green{\ol{y^2xy}} & \green{\ol{y^3x}} & \ol{y^4}
            \end{pmatrix}\\
         &= x^{C_4/e} + y^{C_4/e} + C_4/C_2\cdot(\ol{xy})^{C_2/e} + C_4/e\cdot(\blue{\ol{x^3y}}+\red{\ol{x^2y^2}}+\green{\ol{xy^3}})
   \end{align*}
   Thus, for a general $C_4$-Tambara functor, we have
   \[ N^4_1(x+y) = N^4_1(x) + V^4_1(\ol{x^3y}) + [V^4_1(\ol{x^2y^2}) + V^4_2N^2_1(\ol{xy})] + V^4_1(\ol{xy^3}) + N^4_1(y). \]
   Note that the usual binomial coefficient $6=\binom42$ gets broken up as $6=4+2$ here.
\end{example}

\begin{example}
   We formally have
   \begin{align*}
      (C_2/e\cdot f)^{C_6/C_2}
         &= (f^{\gamma^0} + f^{\gamma^3})^{\gamma^0 + \gamma^1 + \gamma^2}\\
         &= (f^{\gamma^0} + f^{\gamma^3})(f^{\gamma^1} + f^{\gamma^4})(f^{\gamma^2} + f^{\gamma^5})\\
         &= \sum
         \begin{pmatrix}
            \blue{f^{\gamma^0} f^{\gamma^1} f^{\gamma^2}} & \blue{f^{\gamma^0} f^{\gamma^1} f^{\gamma^5}}\\
            \red{f^{\gamma^0} f^{\gamma^4} f^{\gamma^2}} & \blue{f^{\gamma^0} f^{\gamma^4} f^{\gamma^5}}\\
            \blue{f^{\gamma^3} f^{\gamma^1} f^{\gamma^2}} & \red{f^{\gamma^3} f^{\gamma^1} f^{\gamma^5}}\\
            \blue{f^{\gamma^3} f^{\gamma^4} f^{\gamma^2}} & \blue{f^{\gamma^3} f^{\gamma^4} f^{\gamma^5}}
         \end{pmatrix}\\
         &= \blue{C_6/e\cdot(f^{\gamma^0} f^{\gamma^1} f^{\gamma^2})} + \red{C_6/C_3 \cdot (f^{\gamma^0} f^{\gamma^2} f^{\gamma^4})}
   \end{align*}
   Thus, for a general $C_6$-Tambara functor, we have the identity
   \[ N^6_2 V^2_1(f) = V^6_1(\overline{f^3}) + V^6_3 N^3_1(f). \]
\end{example}

In general, we have:

\begin{theorem}[{\cite[Theorems 2.4 and 2.5]{MazurTambara}}]
   Let $G$ be a finite abelian group. There is a set of words $\sfS$ in the letters $x$, $y$, and their conjugates such that
   \[ N^G_K(x+y) = \sum_{K\le H\le G} V^G_H \sum_{\omega\in\sfS} N^H_K(\omega). \]
   In our $p$-typical case, this becomes
   \[ N^m_k(x+y) = \sum_{k\le n\le m} V^m_n \sum_{\omega\in\sfS} N^n_k(\omega). \]
   In our integral case, it becomes
   \[ N^m_k(x+y) = \sum_{k\mid n\mid m} V^m_n \sum_{\omega\in\sfS} N^n_k(\omega). \]
\end{theorem}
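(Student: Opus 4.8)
The plan is to make rigorous the formal manipulations illustrated in the examples above. Since the statement is \cite[Theorems 2.4 and 2.5]{MazurTambara}, I would present this as a sketch and not track the precise shape of $\sfS$, which we will not need. First I would recall Tambara's construction of the norm: $N^G_K \colon A(G/K) \to A(G/G)$ is the norm attached to the projection $\pi \colon G/K \to G/G$, and, as for any Tambara functor, it interacts with sums and transfers through \emph{exponential diagrams}. Writing $x + y = \nabla_*(x,y)$ for the fold map $\nabla \colon G/K \amalg G/K \to G/K$, one has $N^G_K(x+y) = N_\pi \nabla_*(x,y)$, and the exponential diagram for $\nabla$ along $\pi$ computes this as
\[ G/K \amalg G/K \xleftarrow{e} E \xrightarrow{p} \Pi \xrightarrow{q} G/G, \qquad N^G_K(x+y) = q_* \, N_p \, e^*(x,y), \]
where $\Pi$ is the dependent product of $\nabla$ along $\pi$ — its underlying set is $\P(G/K) = \Hom(G/K, \{x,y\})$, the set of two-colorings of $G/K$, with $G$ acting by translating the source — and $E = G/K \times_{G/G} \Pi$ with $e$ the evaluation map.

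Next I would decompose $\Pi$ into $G$-orbits. Because $G$ is abelian it acts on $G/K$ through the quotient group $G/K$, so $K$ acts trivially on $G/K$ and therefore fixes every coloring; hence each orbit has the form $G/H$ with $K \le H \le G$, which accounts for the index range in the statement. Choosing for each orbit a representative coloring $\omega$ fixed by its stabilizer $H$, the transfer $q_*$ splits as $\sum_{K \le H \le G} V^G_H \circ (\text{restriction to the } H\text{-fixed colorings})$, while on the orbit through $\omega$ the norm $N_p$ restricts to $N^H_K$ applied to the monomial in $x$, $y$ and their $\gamma$-conjugates recorded by $\omega$. Here one uses that the fibre $G/K$ of $\pi$, viewed as an $H$-set, is a disjoint union of copies of $H/K$ — for $G = C_{p^n}$ exactly $[G:H]$ of them, enumerated in order by the powers $\gamma^0, \gamma^1, \dots$, which is what produces the bar-notation words; the integral case is the same. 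Collecting the pieces gives $N^G_K(x+y) = \sum_{K \le H \le G} V^G_H \sum_{\omega \in \sfS} N^H_K(\omega)$ with $\sfS$ the set of chosen representatives, and the $p$-typical and integral forms then fall out of the subgroup lattices of $C_{p^n}$ and $C_n$ respectively.

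The step I expect to be the main obstacle is the dictionary in the second paragraph between ``a $G$-orbit of colorings with stabilizer $H$'' and the term ``$V^G_H N^H_K(\omega)$'': unwinding Tambara's exponential diagram to verify that the evaluation-then-norm along $E$ restricted to a single orbit really outputs $N^H_K$ of the expected $\gamma$-decorated word is pure bookkeeping, but it is fiddly, and it is precisely the content of the worked examples above. Pinning down an explicit $\sfS$ — binary necklaces on $[G:K]$ beads, cut open at the bead fixed by $K$ — is the one genuinely combinatorial ingredient, and for these details I would simply refer to \cite{MazurTambara}.
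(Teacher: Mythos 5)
Your proposal is correct, and it is precisely the "official" argument the paper alludes to just before the theorem: decompose the $G$-set $\P(G/K)=\Hom(G/K,\{x,y\})$ appearing in the exponential diagram for $N_\pi T_\nabla$ into $G$-orbits $G/H$ with $K\le H\le G$, and read off $V^G_H N^H_K(\omega)$ on each orbit. The paper itself only illustrates this with worked examples and then cites Mazur for the general statement, which is the same level of detail you reach.
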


\begin{theorem}[{\cite[Theorems 2.8 and 2.9]{MazurTambara}}]
   Let $G$ be a finite abelian group. There is a set of words $\sfT$ in the conjugates of the letter $x$ such that
   \[ N^G_H V^H_K(x) = \sum_{K=H\cap J} V^G_J \sum_{\omega\in\sfT} N^J_K(\omega). \]
   In our $p$-typical case, this becomes
   \[ N^m_n V^n_k(x) = V^m_k \sum_{\omega\in\sfT} \omega. \]
   In our integral case, it becomes
   \[ N^m_n V^n_k (x) = \sum_{\substack{j\mid m\\k=\gcd(n,j)}} V^m_j \sum_{\omega\in\sfT} N^j_k(\omega). \]
\end{theorem}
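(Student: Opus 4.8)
The plan is to run in full generality the orbit-decomposition argument alluded to in the text, then read off the two cyclic specializations. Analogously to the way the identities for $N^G_K(x+y)$ are controlled by the orbit decomposition of $\P(G/K)=\Hom(G/K,*\amalg*)$, the composite $N^G_H V^H_K$ is controlled by the single $G$-set
\[ E \;=\; \{\, s\colon G/H \to G/K \ \mid\ q\circ s = \id \,\}, \qquad q\colon G/K \to G/H, \]
of sections of the quotient $q$, with diagonal $G$-action. Concretely, I would first expand $V^H_K(x)$ formally as the $|H/K|$-term sum $H/K\cdot x$ (matching $\res^H_K \tr^H_K(x)=H/K\cdot x$), then expand $N^G_H$ of this sum as a product over the $|G/H|$ cosets of $G/H$; the monomials that result are indexed exactly by $E$, with a section $s$ contributing the word $\omega_s=\prod_{\bar g\in G/H} x^{(s(\bar g))}$ in the $\gamma$-conjugates of $x$. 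This is the very manipulation carried out by hand in the $C_4$- and $C_6$-examples above, now with the outer product ranging over $G/H$ and the inner sum over $H/K$; the case $H=G$ recovers those examples verbatim.

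Next I would package the $G$-orbits. Writing $E\cong\coprod_i G/J_i$ with $J_i=\mathrm{Stab}(s_i)$, one has $\omega_{g\cdot s_i}=g\cdot\omega_{s_i}$, so the subsum of monomials over a single orbit $G\cdot s_i$ consists precisely of the $G$-conjugates of $\omega_{s_i}$, and these reassemble — via $\res^G_J V^G_J(w)=G/J\cdot w$ together with multiplicativity of the norm maps — into the single term $V^G_{J_i}\,N^{J_i}_{H\cap J_i}(\omega_{s_i})$. Rigorously, this is the content of the standard exponential (distributive) diagram for $q$ and $p\colon G/H\to G/G$, which yields $N^G_H V^H_K(x)=\tr_{\bar p}\,N_{\pi}\,\res_{\ev}(x)$ for $\ev\colon G/H\times E\to G/K$, $(\bar g,s)\mapsto s(\bar g)$, and $\pi,\bar p$ the two structure maps; restricting $\ev,\pi,\bar p$ over each orbit $G/J_i\subseteq E$ gives the asserted term, and for abelian $G$ every orbit of $G/H\times G/J_i$ is isomorphic to $G/(H\cap J_i)$, so the fibrewise norm collapses to one copy of $N^{J_i}_{H\cap J_i}$. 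The remaining ingredient is elementary group theory: since $G$ is abelian the $H$-stabilizer of any coset $hK\in H/K$ is $hKh^{-1}=K$, so $\mathrm{Stab}_H(s)=K$ for \emph{every} section, whence $K\subseteq J_i$ and $H\cap J_i=K$ for all $i$; thus the sum is naturally indexed by subgroups $J$ with $H\cap J=K$, with inner norm $N^J_K$, which is the general identity.

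Finally I would specialize. For $G=C_m$, $H=C_n$, $K=C_k$ with $k\mid n\mid m$, a subgroup $J$ is some $C_j$ with $j\mid m$ and $H\cap J=C_{\gcd(n,j)}$, so the constraint $H\cap J=K$ reads $\gcd(n,j)=k$ and the inner norm becomes $N^j_k$: this is the integral formula. In the $p$-typical case $m,n,k$ are powers of $p$ and the divisor lattice is a chain, so (when $K$ is a proper subgroup of $H$, the only case of interest) the equation $\gcd(n,j)=k$ forces $j=k$; hence every orbit of $E$ is free over $C_m/C_k$, the inner norm $N^k_k$ is the identity, and the sum collapses to $N^m_n V^n_k(x)=V^m_k\sum_{\omega\in\sfT}\omega$. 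I expect the only genuine obstacle to be the purely combinatorial one: exhibiting a concrete set of orbit representatives for $E$, and thereby the explicit word set $\sfT$ — this enumeration is exactly the content of \cite[Theorems~2.8 and 2.9]{MazurTambara}, which I would cite rather than reprove, the rest being the formal expansion above promoted from $C_4$ and $C_6$ to general abelian, and then cyclic, $G$ by means of the exponential diagram.
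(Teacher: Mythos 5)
The paper does not supply its own proof of this statement: it is quoted directly from \cite[Theorems~2.8 and 2.9]{MazurTambara} and used as a black box (the surrounding remark makes clear that all that matters later is that the word set $\sfT$ is the same in every $G$-Tambara functor). So there is no ``paper's own proof'' to compare against line by line; what you have done is reconstruct, in outline, the standard exponential-diagram argument that is carried out in the cited source. That reconstruction is essentially correct, and your identification of the relevant $G$-set $E$ of sections of $G/K\to G/H$, the identity $N^G_H V^H_K = T_{\bar p} N_\pi R_{\ev}$, and the stabilizer computation showing $H\cap\mathrm{Stab}(s)=K$ for abelian $G$ are all right.

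Two small points of care. First, when you restrict over an orbit $G/J\subseteq E$, the $G$-set $G/H\times G/J$ generally breaks into \emph{several} orbits, each isomorphic to $G/(H\cap J)$ (there are $|G|\,|H\cap J|/(|H|\,|J|)$ of them); the fibrewise norm is therefore a product of that many copies of $N^J_{H\cap J}$, not literally ``one copy.'' This is harmless because the norm is multiplicative, so the result is still a single $N^J_{H\cap J}$ applied to a monomial $\omega$ in conjugates of $x$, but a careful write-up should say so, since this multiplicity is exactly what produces nontrivial words $\omega$. Second, in the $p$-typical specialization the exponents in $N^m_n V^n_k$ index the groups $C_{p^m}\supseteq C_{p^n}\supseteq C_{p^k}$, so the constraint $H\cap J=K$ becomes $\min(n,j)=k$; for $k<n$ this forces $j=k$ as you say, and for $k=n$ the statement is vacuous, so it is worth handling that degenerate case explicitly rather than setting it aside as uninteresting. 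Your decision to defer the actual enumeration of $\sfT$ to the cited reference is exactly what the paper itself does.
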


Remarkably, we will be able to verify these identities without knowing the exact values of $\sfS$ and $\sfT$: it suffices to know that they are the same for every $G$-Tambara functor. This is the power of transversality.

\begin{remark}
   Since our Weyl actions are trivial, in the $p$-typical case, we only need to establish the identities
   \[ FVx = px,\quad FNx = x^p,\quad NVx = p^{p-2}V^2 x^p, \]
   and
   \[ N(x+y) = N(x) + N(y) + \frac Vp\sum_{i=1}^{p-1} \binom pi x^{p-i} y^p. \]
   (We are omitting other identities like $F(x+y)=F(x)+F(y)$ which are trivial to verify.)
\end{remark}

\section{Prisms to Tambara functors}
\label{sec:main}

In this section, we present our main construction. In \sec\ref{sub:main}, we functorially associate a Tambara functor $\m A$ to every prism $(A, I)$. In \sec\ref{sub:subcats}, we identify the images of various special types of prisms under this functor. 

\subsection{Main construction}
\label{sub:main}

We give a few descriptions of the construction. Recall that
\[ I_n \defeq I\phi(I)\dotsm\phi^n(I)A. \]

On objects, we set
\[ A^{C_{p^n}} = A/I_n. \]
The restriction maps are the natural quotient maps. To define the transfers and norms, recall that there exist elements $\pi_n\in\phi^n(I)A$ with $\pi_n \equiv p\bmod I_{n-1}$ \cite[Proposition 3.25]{SulSliceTHH}, \cite[Lemma 2.2.8]{APC}. We define
\begin{align*}
  V, N\colon A/I_{n-1} &\to A/I_n\\
  V(x) &= \pi_n x\\
  N(x) &= \phi(x) - \pi_n\delta(x)
\end{align*}
Observe that $FVx = x^p$, $FNx = x^p$, and that $N$ is a ring homomorphism (namely $\phi$) mod $V$. When $I=(d)$ is principal, we may take $\pi_n = u_n \phi^n(d)$ for some unit $u_n\in A^\times$.

\begin{figure}[h]
  \[\xymatrix{
    A/I_2 \ar@{-}[d] \ar[r] & A/\phi(I_1) \ar@{-}[d] \ar[r] & A/\phi^2(I)\\
    A/I_1 \ar@{-}[d] \ar[r] & A/\phi(I)\\
    A/I
  }\]
  \caption{The Tambara functor $\m A$ and its quotients by transfers.}
\end{figure}

To make this construction independent of choices, we will Kan extend from the case of transversal prisms. Recall that for a transversal prism, we have an injection
\[ c_\trans\colon A/I_n \mono \prod_{i=0}^n A/\phi^i(I) \]
by \cite[Lemma 3.7]{AClBTrace}. We will call these \emph{transversal coordinates} and write them as $(t_0,\dotsc,t_{n})$. To work effectively with transversal coordinates, we need to characterize the image of $A/I_n$.

\begin{lemma}
\label{lem:trans-coords}
A vector $(t_0,\dotsc,t_n)$ is in the image of $c_\trans$ if and only if $(t_0,\dotsc,t_{n-1})=c_\trans(t)$ is in the image of $c_\trans$, and $t=t_n$ in $A/(I_{n-1},\phi^n(I))=A/(I_{n-1},p)$.
\end{lemma}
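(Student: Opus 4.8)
The claim is an inductive characterization of the image of the transversal-coordinates map $c_\trans\colon A/I_n\mono\prod_{i=0}^n A/\phi^i(I)$. The plan is to prove both implications directly by chasing the definitions, using the fact that $I_n=I_{n-1}\cap\phi^n(I)\cdot(\text{something})$—more precisely, that in a transversal prism the ideals $I_{n-1}$ and $\phi^n(I)$ interact well. The key structural input is the short exact sequence relating $A/I_n$, $A/I_{n-1}$, and $A/\phi^n(I)$: since $I_n = I_{n-1}\phi^n(I)$ (as ideals, using that $I$ is locally principal) and the prism is transversal, one expects a pullback square
\[\xymatrix{
  A/I_n \ar[r] \ar[d] & A/\phi^n(I) \ar[d]\\
  A/I_{n-1} \ar[r] & A/(I_{n-1},\phi^n(I)).
}\]
Granting this, the lemma is essentially the assertion that this square is a pullback, combined with the inductive hypothesis that $c_\trans$ on level $n-1$ has the asserted image.

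**Main steps.**

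First I would establish the identification $A/(I_{n-1},\phi^n(I)) = A/(I_{n-1},p)$. This is where transversality enters: one has $\phi^n(I)\equiv (p)\bmod I_{n-1}$ because of the element $\pi_n\in\phi^n(I)A$ with $\pi_n\equiv p\bmod I_{n-1}$ recalled just before the statement, together with the fact that $\phi^n(I)$ is principal locally and $\pi_n$ generates it up to a unit (when $I=(d)$, $\pi_n = u_n\phi^n(d)$). Actually one needs both containments: $p\in(I_{n-1},\phi^n(I))$ since $p\equiv\pi_n$, and $\phi^n(d)\in(I_{n-1},p)$ since $\phi^n(d) = u_n^{-1}\pi_n\equiv u_n^{-1}p$. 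Second, I would prove the pullback square is cartesian. The map $A/I_n\to A/I_{n-1}\times_{A/(I_{n-1},p)} A/\phi^n(I)$ is clearly well-defined; surjectivity follows because $I_{n-1}+\phi^n(I)$ surjects appropriately, and injectivity is exactly the statement $I_{n-1}\cap\phi^n(I) = I_n = I_{n-1}\phi^n(I)$, which holds for transversal prisms (the regular sequence condition forces the intersection of these ideals to be their product). Third, with the pullback square in hand, the lemma follows formally: $c_\trans$ at level $n$ is the composite of the projection $A/I_n\to A/I_{n-1}$ followed by $c_\trans$ at level $n-1$ in the first $n$ coordinates, and the projection $A/I_n\to A/\phi^n(I)$ in the last coordinate; the pullback description says a vector $(t_0,\dots,t_n)$ lifts iff $(t_0,\dots,t_{n-1})$ lifts to $A/I_{n-1}$ (i.e.\ is in the image of $c_\trans$ by induction) and the image of that lift in $A/(I_{n-1},p)$ agrees with the image of $t_n$—which is precisely the stated compatibility condition $t = t_n$ in $A/(I_{n-1},\phi^n(I)) = A/(I_{n-1},p)$.

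**The main obstacle.**

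The crux is verifying that $I_{n-1}\cap\phi^n(I) = I_{n-1}\cdot\phi^n(I)$, equivalently that the square above is genuinely cartesian and not merely commutative; this is the one place where the transversality hypothesis does real work rather than bookkeeping. I expect this to reduce to \cite[Lemma 3.7]{AClBTrace} or the regular-sequence characterization of transversal prisms: transversality says $(p, d, \phi(d), \dots)$ (or the appropriate locally-principal version) forms a regular sequence on $A$, which exactly guarantees that consecutive ideals in the tower meet in their product. I would also need to be slightly careful that $c_\trans$ at level $n$ really does restrict to $c_\trans$ at level $n-1$ on the first $n$ coordinates—this is immediate from the definition since both are built from the quotient maps $A/I_m\to A/\phi^i(I)$—and that the case $n=0$ (base case) is vacuous, with $A/I_0 = A/I$ mapping isomorphically to $\prod_{i=0}^0 A/\phi^i(I) = A/I$.
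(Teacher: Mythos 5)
The paper states this lemma without proof, evidently treating it as a direct consequence of \cite[Lemma 3.7]{AClBTrace} and elementary ideal arithmetic; your argument correctly supplies the missing reasoning and almost certainly matches the author's intent. Two points are worth sharpening so that the proof stands on its own rather than gesturing. First, the pullback square is a purely formal fact: for any ideals $J,K\subset A$ the canonical map
\[ A/(J\cap K) \longrightarrow A/J \times_{A/(J+K)} A/K \]
is always an isomorphism (injectivity is immediate from the definition of $J\cap K$, and surjectivity follows by writing $a-b=j+k$ with $j\in J$, $k\in K$, so that $c=a-j=b+k$ is a common lift). Hence no regularity input is needed there; the only nontrivial content in your step~2 is the single equality $I_n = I_{n-1}\cap\phi^n(I)A$. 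Second, the step you phrased tentatively (``I expect this to reduce to \cite[Lemma 3.7]{AClBTrace}'') does reduce cleanly and you should state it as such: injectivity of $c_\trans$ at levels $n-1$ and $n$ says precisely $I_{n-1}=\bigcap_{i=0}^{n-1}\phi^i(I)A$ and $I_n=\bigcap_{i=0}^{n}\phi^i(I)A$, whence $I_n = I_{n-1}\cap\phi^n(I)A$ with no further appeal to regular sequences. With those two observations inserted the proof is complete; your verification of $(I_{n-1},\phi^n(I)A)=(I_{n-1},p)$ from $\pi_n=u_n\phi^n(d)$ and $\pi_n\equiv p\bmod I_{n-1}$, and your handling of the base case $n=0$, are exactly right.
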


\begin{remark}
For any prism $(A, I)$, there is a comparison map
\[ W_m(A/I) \to A/I_m. \]
If we use ghost coordinates on the source and transversal coordinates on the target, this map is given by
\[ (w_0,\dotsc,w_m) \mapsto (w_m,\phi(w_{m-1}),\dotsc,\phi^m(w_0)). \]
We will construct this map and derive this formula in \sec\ref{sec:drw}, but seeing it now is helpful for understanding the ``backward'' formulas in Construction \ref{cons:main}.
\end{remark}

We are ready to give the official definition of $\m A$.

\begin{construction}
\label{cons:main}
Let $A$ be a transversal prism. We define a Tambara functor $\m A$ as follows. On objects,
\[ \m A(G/C_{p^n}) = A/I_n. \]

We define the Tambara structure maps using transversal coordinates as
\begin{align*}
  F(t_0,\dotsc,t_n) &= (t_0,\dotsc,t_{n-1})\\
  V(t_0,\dotsc,t_{n-1}) &= (pt_0,\dotsc,pt_{n-1},0)\\
  N(t_0,\dotsc,t_{n-1}) &= (t_0^p,\dotsc,t_{n-1}^p,\phi(t_{n-1}))
\end{align*}
We need to verify that this definition satisfies the conditions of Lemma \ref{lem:trans-coords}. This is clear for $F$. If $(t_0,\dotsc,t_{n-1})=c_\trans(t)$, then $(pt_0,\dotsc,pt_{n-1})=c_\trans(pt)$ and $(t_0^p,\dotsc,t_{n-1}^p)=c_\trans(t^p)$. Finally, we have $pt_{n-1}\equiv0$ and $\phi(t_{n-1})\equiv t_{n-1}^p$ mod $I_{n-1}+\phi^n(I)A$ since $p\in I_{n-1}+\phi^n(I)A$ by the extended prism condition.

From the description in transversal coordinates, we see that every identity between $F,V,N$ that holds in the Witt Tambara functor $\m W$, for example
\[ NVx=p^{p-2}V^2x^p, \]
is also satisfied in $\m A$. We have thus indeed defined a Tambara functor (if this does not satisfy the reader, we give a ``fully coherent'' proof of an analogous fact in \sec\ref{sec:gns}, which can be $p$-typified to cover the present situation). We now animate this construction \cite[5.1.4]{CSPurity}, associating an \emph{animated} Tambara functor $\m A$ to every animated prism $A$.

It remains to show that $\m A$ is a static Tambara functor when $A$ is a static (not necessarily transversal) prism. For this, we must show that for any transversal prism $(B,J)$ mapping to $(A,I)$, the map
\[ A\tnsr_B B/J_n \to A/I_n \]
is an isomorphism. Note that $\pi_1(A\tnsr_B B/J_n)=\Tor^B_1(A,B/J_n)$ is the $J_n$-torsion in $A$. Since $I_n$ is the pushforward of $J_n$ \cite[Lemma 3.5]{Prismatic}, and $A$ is $I_n$-torsionfree by definition of prism, this Tor term is zero. The same lemma shows that the map is an isomorphism on $\pi_0$.
\end{construction}

\begin{warning}
In the literature, ``derived Mackey functors'' refers to something different \cite{KaledinMackey,SpecDerivedMackey,AMGRMackey}. We really want animated $\m W(\Z)$-modules.
\end{warning}

\begin{remark}
For perfect prisms, we additionally have an $R$ map given by
\[ R(t_0,\dotsc,t_n) = (\phi^{-1}(t_1),\dotsc,\phi^{-1}(t_n)) \]
in transversal coordinates. This map does not exist for general prisms (although the quotient map $A/I_n \to A/\phi(I_{n-1})$ is nearly as good).
\end{remark}

Recall from above that the norm $N^{n+1}_n(f\bmod I_n)$ is lifted by the expression $\~N^{n+1}_n(f) = \phi(f)-\pi_{n+1}\delta(f)$. It is useful to have a similar explicit expression for iterates of the norm.

\begin{definition}
  In a $\delta$-ring, the operations $\theta_n$ are given by
  \[ \phi(f^{p^{n-1}}) = f^{p^n} + p^n\theta_n(f). \]
  In particular, $\theta_1=\delta$.
\end{definition}

\begin{proposition}
  In a $\delta$-ring, we have
  \[ \phi^n(f) = f^{p^n} + \sum_{i=1}^n p^i \theta_i(f)^{\phi^{n-i}}. \]
\end{proposition}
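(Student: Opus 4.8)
The plan is to prove the identity $\phi^n(f) = f^{p^n} + \sum_{i=1}^n p^i\theta_i(f)^{\phi^{n-i}}$ by induction on $n$, using the defining relation $\phi(f^{p^{n-1}}) = f^{p^n} + p^n\theta_n(f)$ for the operations $\theta_n$. The base case $n=1$ is exactly the definition $\phi(f) = f^p + p\,\delta(f) = f^p + p\,\theta_1(f)$.

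For the inductive step, I would assume the formula holds for $n$ and apply $\phi$ to both sides, using that $\phi$ is a ring homomorphism:
\[
  \phi^{n+1}(f) = \phi(f^{p^n}) + \sum_{i=1}^n p^i\,\phi\bigl(\theta_i(f)^{\phi^{n-i}}\bigr)
  = \phi(f^{p^n}) + \sum_{i=1}^n p^i\,\theta_i(f)^{\phi^{n+1-i}}.
\]
Now rewrite the first term using the definition of $\theta_{n+1}$: $\phi(f^{p^n}) = f^{p^{n+1}} + p^{n+1}\theta_{n+1}(f)$. Substituting, the right-hand side becomes $f^{p^{n+1}} + p^{n+1}\theta_{n+1}(f) + \sum_{i=1}^n p^i\,\theta_i(f)^{\phi^{n+1-i}}$, and since $\phi^{n+1-(n+1)} = \phi^0 = \id$, the term $p^{n+1}\theta_{n+1}(f)$ is precisely the $i = n+1$ summand. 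This gives $\phi^{n+1}(f) = f^{p^{n+1}} + \sum_{i=1}^{n+1} p^i\,\theta_i(f)^{\phi^{n+1-i}}$, completing the induction.

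The only point requiring a little care—and the closest thing to an obstacle—is making sure that the operations $\theta_n$ are indeed well-defined by the stated formula, i.e.\ that $\phi(f^{p^{n-1}}) - f^{p^n}$ is divisible by $p^n$ in a way that pins down $\theta_n(f)$ uniquely; this needs $p$-torsionfreeness of the $\delta$-ring (or working with the universal such ring and specializing), which is the standard convention for such constructions. Granting that, the argument is a direct telescoping induction with no further subtleties, and the placement of the $\phi$-twists works out exactly because applying $\phi$ once shifts each exponent $n-i \mapsto n+1-i$.
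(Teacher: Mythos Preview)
Your induction argument is correct and is precisely the natural way to prove this identity. The paper actually states this proposition without proof, so there is nothing to compare against; your write-up would serve perfectly well as the omitted argument. Your side remark about well-definedness of $\theta_n$ is handled in the usual way (define $\theta_n$ as an integral polynomial in iterated $\delta$-operations, checked in the universal $p$-torsionfree case), and does not affect the validity of the induction.
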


\begin{lemma}
  \label{lem:n-lift-p}
  The expressions
  \begin{align*}
    \~V^{m+n}_n(f) &= \pi_{n+1}\dotsm \pi_{m+n}\\
    \~N^{m+n}_n(f) &= f^{\phi^m} - \sum_{i=1}^m \~V^{m+n}_{m+n-i}\theta_i(f)^{\phi^{m-i}} 
  \end{align*}
  are lifts of $V^{m+n}_n(f\bmod I_n)$ and $N^{m+n}_n(f\bmod I_n)$.
\end{lemma}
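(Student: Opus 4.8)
The plan is to handle the two formulas separately. The one for $\~V^{m+n}_n$ is immediate: the single transfer $V^{j}_{j-1}\colon A/I_{j-1}\to A/I_j$ is multiplication by $\pi_j$, so the iterate $V^{m+n}_n=V^{m+n}_{m+n-1}\circ\dots\circ V^{n+1}_n$ is multiplication by $\pi_{n+1}\dotsm\pi_{m+n}$, and $\~V^{m+n}_n(f)$ as written is the corresponding lift; likewise $\~V^{m+n}_{m+n-i}$ (appearing in the norm formula) is multiplication by $\pi_{m+n-i+1}\dotsm\pi_{m+n}$.

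For $\~N^{m+n}_n$ I would induct on $m$. The case $m=1$ is the definition $N^{n+1}_n(x)=\phi(x)-\pi_{n+1}\delta(x)$ together with $\theta_1=\delta$. For the inductive step, factor $N^{m+1+n}_n=N^{m+1+n}_{m+n}\circ N^{m+n}_n$. By the inductive hypothesis $\~N^{m+n}_n(f)$ lifts $N^{m+n}_n(f\bmod I_n)$, so---since the structure map $N^{m+1+n}_{m+n}\colon A/I_{m+n}\to A/I_{m+1+n}$ is well defined by Construction~\ref{cons:main}---applying its formula $x\mapsto\phi(x)-\pi_{m+1+n}\delta(x)$ to $\~N^{m+n}_n(f)$ yields a lift of $N^{m+1+n}_n(f\bmod I_n)$. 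It therefore suffices to prove the congruence
\[
\phi\!\left(\~N^{m+n}_n(f)\right)-\pi_{m+1+n}\,\delta\!\left(\~N^{m+n}_n(f)\right)\equiv\~N^{m+1+n}_n(f)\pmod{I_{m+1+n}}.
\]
I would expand the left side using: that $\phi$ is a ring homomorphism, which carries $f^{\phi^m}$ to $f^{\phi^{m+1}}$ and each $\theta_i(f)^{\phi^{m-i}}$ to $\theta_i(f)^{\phi^{m+1-i}}$; the defining identity $\phi(f^{p^m})=f^{p^{m+1}}+p^{m+1}\theta_{m+1}(f)$, which produces the new $\theta_{m+1}(f)$-term; the telescoping relation $\pi_{n+1}\dotsm\pi_{m+1+n}=\pi_{m+1+n}\cdot(\pi_{n+1}\dotsm\pi_{m+n})$ and its shifts, for the coefficients; and the $\delta$-ring identities for $\delta(x+y)$ and $\delta(xy)$ to compute $\delta(\~N^{m+n}_n(f))$.

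The hard part will be showing that everything not already accounted for lies in $I_{m+1+n}$. The ingredients are the congruences $\pi_j\equiv p\pmod{I_{j-1}}$, the inclusions $\pi_jI_{j-1}\subseteq I_j$ (immediate from $\pi_j\in\phi^j(I)A$ and $I_j=I_{j-1}\phi^j(I)A$), and a sharpened inductive hypothesis: rewriting $\~N^{m+n}_n(f)$ via the preceding Proposition as $f^{p^m}+\sum_{i=1}^m\bigl(p^i-\pi_{m+n-i+1}\dotsm\pi_{m+n}\bigr)\theta_i(f)^{\phi^{m-i}}$, a short secondary induction on $i$ (again via $\pi_jI_{j-1}\subseteq I_j$) shows each coefficient $p^i-\pi_{m+n-i+1}\dotsm\pi_{m+n}$ lies in $I_{m+n-i}$. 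Assembling the cross terms coming out of the $\delta$-expansion and verifying they cancel or fall into $I_{m+1+n}$ is the one genuinely delicate calculation; it is organized by reducing intermediate expressions modulo $I_{m+n}$ wherever the well-definedness of the Tambara structure maps permits it, and by choosing the factorization of the iterated norm judiciously. (One could also bypass this bookkeeping by $p$-typifying the fully coherent construction of \sec\ref{sec:gns}.)
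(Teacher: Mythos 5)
Your treatment of $\~V^{m+n}_n$ matches the paper, which simply calls it clear. For $\~N^{m+n}_n$, however, you take a genuinely different route: the paper does not induct on $m$ at all. It instead characterizes $N^{m+n}_n(f\bmod I_n)$ in transversal coordinates --- it equals $f^{p^m}\bmod I_n$ and $\phi^k(f)^{p^{m-k}}\bmod\phi^{n+k}(I)$ for $1\le k\le m$ --- and then checks these congruences directly, one coordinate at a time: modulo $\phi^{n+k}(I)$ the terms of $\~N^{m+n}_n(f)$ with $i>m-k$ vanish (because the factor $\pi_{n+k}$ divides $\~V^{m+n}_{m+n-i}(1)$), while those with $i\le m-k$ have coefficient $\equiv p^i$, so the whole sum collapses to $\phi^k(f^{p^{m-k}})$ by the displayed $\delta$-ring identity for $\phi^r$. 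This sidesteps ever taking $\delta$ of a sum. Your inductive factorization $N^{m+1+n}_n=N^{m+1+n}_{m+n}\circ N^{m+n}_n$ is valid in principle --- since $N^{m+1+n}_{m+n}$ is well-defined on $A/I_{m+n}$, the element $\phi(\~N^{m+n}_n f)-\pi_{m+n+1}\delta(\~N^{m+n}_n f)$ is indeed a lift of $N^{m+1+n}_n(f\bmod I_n)$ --- but the ``delicate calculation'' you defer carries essentially all the weight of the lemma, and it is heavier than your outline lets on: $\phi(\pi_{m+n-i+1}\dotsm\pi_{m+n})$ is not $\pi_{m+n-i+2}\dotsm\pi_{m+n+1}$ and must be compared to it through carefully tracked ideals, and $\delta$ of the $(m{+}1)$-term lift produces a thicket of cross terms whose cancellation is exactly the content one is trying to establish. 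The work is doable but ends up longer and less transparent than the coordinatewise check; your parenthetical suggestion to $p$-typify the coherent argument of \sec\ref{sec:gns} is in fact the alternative the paper itself endorses for a ``fully coherent'' version.
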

\begin{proof}
  The claim for $\~V^{m+n}_n$ is clear. The norm $N^{m+n}_n(f\bmod I_n)$ is characterized in transversal coordinates by
  \[
    N^{m+n}_n(f\bmod I_n) =
    \begin{cases}
      f^{p^m} & {}\bmod {I_n}\\
      \phi^k(f)^{p^{m-k}} & {}\bmod{\phi^{n+k}(I)},\quad 1\le k\le m
    \end{cases}
  \]
  so we just need to check that $\~N^{m+n}_n$ satisfies this congruence. For any $0\le k\le m$, we have
  \begin{align*}
    \~N^{m+n}_n(f)
      &= f^{\phi^m} - \sum_{i=1}^m\~V^{m+n}_{m+n-i} \theta_i(f)^{\phi^{m-i}}\\
      &\equiv f^{\phi^m} - \sum_{i=1}^{m-k} p^i\theta_i(f)^{\phi^{m-i}} \quad\bmod {\phi^{n+k}(I),\text{ or }I_n\text{ if }k=0}\\
      &= \phi^k\left(f^{\phi^{m-k}}-\sum_{i=1}^{m-k} p^i\theta_i(f)^{m-k-i}\right)\\
      &= \phi^k(f^{p^{m-k}})\\
      &= \phi^k(f)^{p^{m-k}}.\qedhere
  \end{align*}
\end{proof}

\begin{remark}
  Similarly, one can check that for any ring $R$, we have
  \[ N^n(f) = f - \sum_{i=1}^n V^i \theta_i(f) \]
  in $W(R)$.
\end{remark}

\subsection{Subcategories}
\label{sub:subcats}

We now identify the image of transversal, crystalline, and perfect prisms under the functor constructed above. The point is to give definitions that make sense for every $G$, so that these notions will make sense for $G$-prisms once those have been defined. The fact that we are able to do so is evidence for thinking that ``$G$-prisms'' might exist.




\begin{definition}
  A Mackey functor $\m M$ is \emph{transversal} if the natural map
  \[ M^H \to \prod_{K\subconj H} \H^0(W_H(K), M^K)/\P_K \]
  is an injection for all $H\le G$.
\end{definition}

\begin{remark}
  Let $\m M$ be a Mackey functor, and let $\cF$ be a family of subgroups of $G$. Viewing $\m M$ as a $G$-spectrum, there is a pullback square
  \[\xymatrix{
    \m M \ar[r] \ar[d] \ar@{}[dr]|\square & \~{E\cF} \tnsr \m M \ar[d]\\
    \m M^{E\cF_+} \ar[r] & \~{E\cF} \tnsr \m M^{E\cF_+}
  }\]
  We expect that $\m M$ being transversal is equivalent to $\mpi_0$ of this square being a pullback for all families $\cF$. Indeed, this was our original definition of transversality. We have adopted the above definition for expediency, since we want to postpone developing the theory of transversality to future work. Notably, for transversal Mackey functors, there should be a version of the stratified perspective on equivariance \cite{Glasman,AMGRStratified} that stays in the world of Mackey functors.
\end{remark}
 

\begin{proposition}
A prism $A$ is transversal if and only if $\m A$ is a transversal Mackey functor.
\end{proposition}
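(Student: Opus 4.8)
I would unpack both sides of the claimed equivalence via the transversal coordinate embedding $c_\trans$. For a prism $(A,I)$, "transversal" means (by the standard definition, cf.\ \cite{AClBTrace}) that $A/I_n$ injects into $\prod_{i=0}^n A/\phi^i(I)$ for all $n$; equivalently that $A$ is $p$-torsionfree and $A/p$ is $I$-torsionfree, but the cleanest characterization to use is the injectivity of $c_\trans$ itself. On the other side, unwinding the definition of a transversal Mackey functor for $G=C_{p^\infty}$ (so all proper subgroups are the $C_{p^n}$, all Weyl actions trivial), the condition that $\m A$ is transversal says precisely that $M^{C_{p^n}} = A/I_n$ injects into $\prod_{k\le n} M^{C_{p^k}}/\P_{C_{p^k}} = \prod_{k\le n} (A/I_k)/\P_{C_{p^k}}$. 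So the proof is a matter of identifying these two product targets and checking the maps agree.

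**Key steps, in order.** First I would compute the ideal of transfers $\P_{C_{p^k}}$ inside $A^{C_{p^k}} = A/I_k$. The transfer into level $k$ from level $k-1$ is $V(x) = \pi_k x$ with $\pi_k \equiv p \bmod I_{k-1}$; iterating, the image of all transfers $\im V^k_j$ for $j<k$ is generated by $\pi_k$, hence $\P_{C_{p^k}} = (\pi_k) \subseteq A/I_k$, and $(A/I_k)/\P_{C_{p^k}} = A/(I_k,\pi_k) = A/(I_{k-1},\phi^k(I)) = A/(I_{k-1},p)$, using that $\pi_k \in \phi^k(I)A$ generates $\phi^k(I)$ modulo $I_{k-1}$ together with the extended prism relation $p \in I_{k-1}+\phi^k(I)A$. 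Thus the target of the transversality map for $\m A$ at level $n$ is $\prod_{k=0}^n A/(I_{k-1},p)$ (with the $k=0$ term being $A/I = A^e$ itself). Second, I would recognize this as exactly the "graded pieces" target implicit in Lemma \ref{lem:trans-coords}: the image of $c_\trans\colon A/I_n \hookrightarrow \prod_{i=0}^n A/\phi^i(I)$ is cut out inductively by matching adjacent coordinates modulo $(I_{k-1},p)$, so $c_\trans$ is injective if and only if the composite $A/I_n \to \prod_k A/(I_{k-1},p)$ is. Concretely I would show by induction on $n$, using Lemma \ref{lem:trans-coords}, that $\ker(c_\trans) = \ker\big(A/I_n \to \prod_{k=0}^n A/(I_{k-1},p)\big)$, or more precisely that one is injective iff the other is. Third, I would check that the map appearing in the definition of transversal Mackey functor — built from restrictions followed by quotient by $\P$ — coincides, under these identifications, with this composite; this is essentially bookkeeping since the restriction maps are the quotient maps $A/I_n \to A/I_k$ and then $A/I_k \to A/(I_{k-1},p)$.

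**Main obstacle.** The delicate point is matching the two injectivity conditions: transversality of the prism is the injectivity of $c_\trans$ into $\prod A/\phi^i(I)$, whereas transversality of $\m A$ is injectivity into the \emph{smaller} product $\prod A/(I_{k-1},p)$ of "associated graded" type quotients. A priori the latter is a weaker condition. The resolution is that Lemma \ref{lem:trans-coords} shows the image of $c_\trans$ is defined by a \emph{successive} compatibility — each new coordinate is constrained only modulo $(I_{k-1},p)$ — so that an element dies in $\prod A/(I_{k-1},p)$ exactly when it already died in $A/I_n$; formally, one runs the induction: if $t \in A/I_n$ maps to $0$ in every $A/(I_{k-1},p)$, then in particular its image $\bar t \in A/I_{n-1}$ maps to $0$ in every $A/(I_{k-1},p)$ for $k\le n-1$, so by induction (assuming $\m A$ transversal at lower levels $\Leftrightarrow$ prism transversal, or directly) $\bar t$ is controlled, and then the top relation in Lemma \ref{lem:trans-coords} pins down $t$. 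I expect the cleanest writeup actually proves both directions simultaneously by showing $c_\trans$ and the Mackey-transversality map have literally the same kernel at each level, reducing the whole proposition to Lemma \ref{lem:trans-coords} plus the computation $\P_{C_{p^k}} = (\pi_k)$. The one thing to be careful about is the $k=0$ edge case and making sure "$p$-torsionfree plus $A/p$ $I$-torsionfree" — the usual definition of transversal prism — is genuinely equivalent to injectivity of all the $c_\trans$; this is \cite[Lemma 3.7]{AClBTrace}, which I may cite rather than reprove.
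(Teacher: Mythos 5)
Your computation of the ideal of transfers has an error, and it propagates into the rest of the argument. You write
\[
(A/I_k)/\P_{C_{p^k}} = A/(I_k,\pi_k) = A/(I_{k-1},\phi^k(I)) = A/(I_{k-1},p),
\]
but the middle equality is wrong: since $I_k = I_{k-1}\phi^k(I) \subset \phi^k(I)A$ and (for $I$ principal) $\pi_k$ generates $\phi^k(I)A$, one has $(I_k,\pi_k) = (I_k,\phi^k(I)) = \phi^k(I)$, \emph{not} $(I_{k-1},\phi^k(I))$; you have replaced $I_k$ by $I_{k-1}$ in passing. The upshot is that $(A/I_k)/\P_{C_{p^k}} = A/\phi^k(I)$, so the Mackey-transversality map at level $n$ is \emph{literally} $c_\trans\colon A/I_n \to \prod_{k=0}^n A/\phi^k(I)$. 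This makes your ``main obstacle'' --- reconciling $\prod A/(I_{k-1},p)$ with $\prod A/\phi^i(I)$ --- a phantom, and the inductive reconciliation via Lemma \ref{lem:trans-coords} is unnecessary.

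Once the targets are correctly identified, the real content is the converse direction, which your proposal does not supply. You say you will cite \cite[Lemma 3.7]{AClBTrace} for the ``genuine equivalence,'' but that lemma gives only the implication (prism transversal) $\Rightarrow$ ($c_\trans$ injective); it does not give the converse. The converse --- that injectivity of $c_\trans$ (even just at $n=1$) forces $A/I$ to be $p$-torsionfree --- requires a separate argument: if $pf\in I$, then since $p\equiv\pi_1\bmod I$ one has $\pi_1 f\in I$, and trivially $\pi_1 f\in\phi(I)A$; injectivity of $A/I_1\hookrightarrow A/I\times A/\phi(I)$ forces $I\cap\phi(I)A = I_1$, so $\pi_1 f\in I_1 = I\phi(I)A$, and one cancels the non-zerodivisor $\pi_1$ (a generator of $\phi(I)A$) to conclude $f\in I$. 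Your writeup should establish $\P_{C_{p^k}}$ correctly, observe that the Mackey transversality map is $c_\trans$, cite \cite[Lemma 3.7]{AClBTrace} for one direction, and run this cancellation argument for the other.
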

\begin{proof}
  The ``only if'' direction is \cite[Lemma 3.7]{AClBTrace}. Conversely, if $\m A$ is a transversal Mackey functor, then in particular
  \[ A/I_1 \hookrightarrow A/I \times A/\phi(I). \]
  Suppose $f\in A$ with $pf\in I$. Since $p\in I+\phi(I)A$, this is equivalent to $\pi_1f\in I$, where $\pi_1$ is a generator of $\phi(I)$. The above injection implies that $I\cap \phi(I)=I_1$, so we have $\pi_1 f\in I_1$. Since $\pi_1$ is a non-zerodivisor, we can cancel to get $f\in I$. This shows that $A/I$ is $p$-torsionfree.
\end{proof}

Next we turn to crystalline prisms. One can easily characterize the image of crystalline prisms as those $\m A$ such that $p=0$ in $A^e$. However, this characterization is not very natural from the equivariant perspective.

\begin{definition}
A Mackey functor $\m M$ is \emph{cohomological} if $\tr^H_K\res^H_K x=|H/K|x$ for all $K\preceq H$ and all $x\in M^H$. A Tambara functor $\m A$ is cohomological if it is cohomological as a Mackey functor, and furthermore $N^H_K\res^H_K x = x^{|H/K|}$ for all $K\preceq H$ and all $x\in A^H$.
\end{definition}

\begin{remark}
  For $G=C_2$, a Tambara functor $\m A$ is cohomological as soon as it satisfies $N^H_K\res^H_K x = x^{|H/K|}$. This follows from the formula
  \[ N(x+y) = N(x) + N(y) + V(x\bar y) \]
  applied to $y=1$ \cite[Example 3.8(i)]{DMPPolynomial}. We do not know if this is true for general $G$.
\end{remark}

\begin{remark}
  Being a cohomological Mackey functor (resp.\ Tambara functor) is the same as being a $\m\Z$-module (resp.\ $\m\Z$-algebra).
\end{remark}

We wanted to prove the following:

\begin{conjecture}
  \label{conj:crystalline}
  A prism $(A, I)$ is crystalline if and only if $\m A$ is a cohomological Tambara functor. As we will see below, this is equivalent to the conjecture that $A$ is crystalline if and only if $\phi(I)A=(p)$.
\end{conjecture}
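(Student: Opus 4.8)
The forward direction is immediate. If $(A,I)$ is crystalline, so $I=(p)$, then $\phi^n(I)A=(p)$ for every $n$, so in Construction~\ref{cons:main} we may take $\pi_n=p$ throughout; then $V(x)=px$ and the norm lifts to $\~N(x)=\phi(x)-p\delta(x)=x^p$, so $\tr^H_K\res^H_K x=|H/K|\,x$ and $N^H_K\res^H_K x=x^{|H/K|}$ and $\m A$ is cohomological. For the converse I would first establish the auxiliary equivalence promised in the statement, namely that $\m A$ is cohomological $\iff p\in\phi(I)A$ (and hence $\iff\phi(I)A=(p)$, given the second step below). One direction is the computation just made, run backwards: $p\in\phi(I)A$ implies $p\in\phi^n(I)A$ for all $n\ge1$ (apply $\phi$ and use $\phi(p)=p$), so $\pi_n=p$ is admissible and $\m A$ is cohomological. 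For the other, cohomologicality at the bottom level gives $\tr^{C_p}_e\res^{C_p}_e 1=\pi_1=p$ in $A/I_1$, i.e.\ $\pi_1\equiv p\bmod I_1$; since $\pi_1$ and $I_1$ both lie in $\phi(I)A$, this forces $p\in\phi(I)A$. Thus the conjecture reduces to the purely prismatic assertion: \emph{if $p\in\phi(I)A$ then $I=(p)$}.

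The first half of this assertion I can prove cleanly. After a faithfully flat base change assume $I=(d)$ with $d$ distinguished, and write $p=\phi(d)c$. Applying $\delta$ (and $\delta\phi=\phi\delta$) yields
\[ 1-p^{p-1}=\phi^2(d)\,\delta(c)+\phi(\delta(d))\,c^p. \]
Applying $\phi$ to $p=\phi(d)c$ shows $p=\phi^2(d)\,\phi(c)\in(\phi^2(d))$, so reducing the displayed identity mod $(\phi^2(d))$ collapses it to $1=\phi(\delta(d))\,c^p$; since $\delta(d)\in A^\times$, this makes $c$ a unit modulo $(\phi^2(d))$. Next, the $(\phi^2(d))$-adic and $(p,I)$-adic topologies on $A$ agree: $\phi^2(d)=\phi(d)^p+p\phi(\delta(d))\in(p,d)$, while conversely $p\in(\phi^2(d))$ and $d^{p^2}\equiv\phi^2(d)\bmod p$ (hence $d^{p^2}\in(\phi^2(d))$) give $(p,d)^{p^2}\subseteq(\phi^2(d))$. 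As $A$ is $(p,I)$-complete it is then $(\phi^2(d))$-adically complete, so $\bar c$ a unit lifts to $c\in A^\times$; therefore $\phi(I)A=(\phi(d))=(p)$, say $\phi(d)=pu$ with $u\in A^\times$. This in turn forces $A$ to be $p$-torsionfree: $\phi(d)=d^p+p\delta(d)$ gives $d^p=pv$ with $v:=u-\delta(d)$, so if $pa=0$ then $d^p a=v\cdot pa=0$ and $d^p$ is a nonzerodivisor, whence $a=0$.

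What is left is to pass from $\phi(d)=pu$ — equivalently from $d^p=pv$, i.e.\ $\bar d^p=0$ in $A/p$ — to $d\in(p)$ (and then $d=pw$ with $w$ a unit: $p\phi(w)=\phi(d)=pu$ and $p$-torsionfreeness give $\phi(w)=u$, so $w^p\equiv u$ is a unit mod $p$ and $w\in A^\times$ by $p$-completeness, giving $I=(d)=(p)$). This is the step I expect to be the real obstacle. It goes through whenever $\bar d$ is not a nonzero nilpotent zerodivisor of $A/p$ — for instance when $A/p$ is reduced, or for perfect prisms (where $\phi$ is an automorphism fixing $p$, so the assertion is trivial). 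But for a general prism $A/p$ can be non-reduced and $d$ a zerodivisor on it, and I see no way to exclude a distinguished $d$ with $\phi(d)\in(p)$ but $d\notin(p)$ using only the $\delta$-structure and completeness: the obvious bootstrap — $v=d^p/p$ satisfies $\phi(v)=p^{p-1}u^p$, hence is a non-unit with $\bar v^p=0$, and one repeats with $v^p/p$, etc.\ — never terminates. The natural remedy is to descend along the map from a transversal prism built in Construction~\ref{cons:main}, reducing to a case where $A/p$ is controlled; but that map is not known to be faithfully flat, so one cannot detect the condition $\phi(J)B=(p)$ after base change. This is why the statement is left as a conjecture.
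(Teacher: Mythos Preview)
Your write-up is honest and its positive content is correct: the statement is a conjecture in the paper as well, with only the case $p=2$ settled, and you rightly do not claim a full proof. Your argument for the auxiliary equivalence---$\m A$ cohomological $\iff p\in\phi(I)A\iff\phi(I)A=(p)$---is valid and makes precise the ``as we will see below'' clause. The paper only establishes this implicitly, inside the $p=2$ proof, and for the step $p\in(\phi(d))\Rightarrow(\phi(d))=(p)$ it writes $p=\phi(d)(\delta(d)^{-1}-dx)$ and invokes the ``two distinguished elements dividing each other are associates'' lemma \cite[Lemma~2.24]{Prismatic}, rather than your direct $\delta$-computation and completeness argument; both are fine. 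Your deductions that $A$ is $p$-torsionfree and that the remaining obstruction is a nonzero nilpotent $\bar d\in A/p$ are also correct.

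Where your route is weaker than the paper's partial result is in what you extract toward crystallinity. By immediately trading the congruence $\pi_1\equiv p\bmod I_1$ for its coarser consequence $p\in\phi(I)A$, you end up with only $d^p\in(p)$, which is the generic fact $\bar d^{\,p}=0$ in $A/p$ and settles no prime. The paper keeps the full mod-$I_1$ information: taking $\pi_1=\phi(d)/\delta(d)$, the congruence $\phi(d)/\delta(d)\equiv p\bmod d\phi(d)$ lets one cancel a factor of $d$ to obtain
\[ d^{p-1}=\phi(d)\,x\,\delta(d)\in(\phi(d))=(p), \]
one power sharper than yours. This is exactly what closes the case $p=2$ (where $d^{p-1}=d$), and it is the divisibility you should be bootstrapping from for $p>2$, not $d^p\in(p)$. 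So your diagnosis of the obstruction is right, but your reduction discards the leverage that gives the paper its one unconditional case.
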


Unfortunately, we were unable to prove this. We can do so in the case $p=2$, however:

\begin{proposition}
  Conjecture \ref{conj:crystalline} is true when $p=2$.
\end{proposition}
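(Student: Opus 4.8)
We must show that for $p=2$, a prism $(A,I)$ is crystalline (i.e.\ $p=0$ in $A/I$, equivalently $A$ is a $\Z_p$-prism) if and only if $\m A$ is a cohomological Tambara functor. Per the remark following Conjecture \ref{conj:crystalline}, it suffices to relate crystallinity to the condition $\phi(I)A = (p)$. The plan is to unwind both conditions explicitly in the $p=2$ case, where the cohomological axiom reduces (by the remark citing \cite{DMPPolynomial}) to the single identity $N^H_K\res^H_K x = x^{|H/K|}$, and where the relevant norms and transfers are governed by the elements $\pi_n$ with $\pi_n\equiv p\bmod I_{n-1}$ and $\pi_n$ a generator of $\phi^n(I)A$ (up to a unit, since we may reduce to transversal prisms and Kan extend, or argue directly).

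**Forward direction.** First I would show that if $(A,I)$ is crystalline then $\phi(I)A=(p)$: crystalline means $A$ is a $\delta$-$\Z_p$-algebra with $I$ the kernel of a map to a $p$-torsionfree ring, and $\phi$ acts on $\Z_p$ trivially, so one checks $\phi(I)A=(p)$ using that $I$ contains a distinguished element $d$ with $\phi(d)\equiv p$ and $\delta(d)$ a unit — here the key point is that $\phi(d)$ and $p$ differ by a unit precisely in the crystalline case. Given $\phi(I)A=(p)$, we may take $\pi_1 = $ (unit)$\cdot p$, and then $V^H_K$ and $N^H_K$ become multiplication-by-$|H/K|$ and $|H/K|$-th power maps after restriction, so $\m A$ is cohomological; this direction should be essentially formal once $\phi(I)A=(p)$ is established.

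**Reverse direction — the main obstacle.** The hard part is the converse: assuming $\m A$ is cohomological, deduce $\phi(I)A=(p)$ (hence crystallinity). Cohomologicality gives $N^1_0\res^1_0 x = x^2$ in $A/I_1 = A/I\phi(I)A$ for all $x\in A/I$. Computing the left side with the explicit lift $\~N^1_0(f) = \phi(f) - \pi_1\delta(f)$ and $\res$ being the quotient, the identity says $\phi(f) - \pi_1\delta(f) \equiv f^2 \pmod{I_1}$ for all $f$, i.e.\ $p\,\theta_1(f) \equiv \pi_1\delta(f) \pmod{I_1}$, so $(p - \pi_1)\delta(f)\equiv 0$. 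Taking $f$ with $\delta(f)$ a unit mod $I_1$ (e.g.\ a lift of a distinguished element, using $\delta(d)\in A^\times$) forces $p\equiv\pi_1\pmod{I_1}$, and since $\pi_1$ generates $\phi(I)A$ and $p\in I+\phi(I)A$ with $\pi_1\equiv p\bmod I$, one bootstraps to $p\in\phi(I)A$, then $(p)=\phi(I)A$ by a non-zerodivisor/length argument. The reason this works for $p=2$ but not general $p$ is that for $p=2$ the binomial correction terms in $N(x+y)$ collapse (only $\binom{2}{1}=2$ appears, absorbed into the $V(x\bar y)$ term), so the cohomological condition is captured by the single power-law identity and there is no interference from higher Tambara reciprocity terms; for $p>2$ the extra terms $\binom{p}{i}x^{p-i}y^i$ obstruct this clean deduction. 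I would carry out the reverse direction by (i) reducing to the transversal case via Construction \ref{cons:main}, where $c_\trans\colon A/I_1\hookrightarrow A/I\times A/\phi(I)$ lets one read off the congruence $\phi(f)\equiv f^2\bmod\phi(I)$ cleanly; (ii) extracting $p\in\phi(I)A$ from the $\delta(d)$-unit argument; and (iii) concluding $\phi(I)A=(p)$ and hence that $A/I$ kills $p$, i.e.\ $A$ is crystalline.
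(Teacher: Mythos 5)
Your derivation of $\phi(I)A=(p)$ from the cohomological hypothesis is essentially correct and is a valid alternative to the paper's: you work from the norm identity $N\res(f)=f^2$, unwinding $\phi(f)-\pi_1\delta(f)\equiv f^2 \bmod I_1$ to $(p-\pi_1)\delta(f)\equiv 0$, and then feed in $f=\g$ distinguished; the paper instead works from the transfer identity $V(1)=p$, i.e.\ $\phi(\g)/\delta(\g)\equiv p\bmod I_1$. Both routes land on the same congruence $\pi_1\equiv p\bmod I_1$ and hence on $\phi(\g)A=(p)$ by \cite[Lemma 2.24]{Prismatic}.

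The gap is step (iii). You write ``concluding $\phi(I)A=(p)$ and hence that $A/I$ kills $p$,'' but $\phi(I)A=(p)\Rightarrow I=(p)$ is precisely the content of the remark inside Conjecture~\ref{conj:crystalline} and is \emph{not} known in general --- it is what the proposition must prove for $p=2$. You never produce an argument for it, and your stated rationale for why $p=2$ is special (that the binomial correction terms in $N(x+y)$ collapse) plays no role here: once the Tambara axioms are granted, Tambara reciprocity does not enter the computation at all. The actual reason $p=2$ is special, and the step your argument is missing, is the following: massaging the congruence $\phi(\g)/\delta(\g)\equiv p\bmod I_1$ yields (after subtracting $p$ from both sides and cancelling $\g$) the identity $\g^{p-1}=\phi(\g)\,x\,\delta(\g)$ for some $x\in A$. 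Combined with $\phi(\g)A=(p)$ this gives $p\mid\g^{p-1}$, and when $p=2$ this is $p\mid\g$, whence $(p)=(\g)$ by a second application of \cite[Lemma 2.24]{Prismatic}. For $p>2$ one only gets $p\mid\g^{p-1}$, which does not give $p\mid\g$, and the conjecture remains open. Your norm-based setup can in fact be pushed to extract the same identity (substitute $\pi_1=\phi(\g)/\delta(\g)$ into $p\equiv\pi_1\bmod I_1$ and expand), so the route is salvageable, but as written the last step is asserted rather than proved, and the heuristic you offer for it is not the correct one.

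Two minor points: in the forward direction, ``crystalline'' for this paper simply means $I=(p)$; the description via a map to a $p$-torsionfree ring is not needed. And the ``non-zerodivisor/length argument'' you invoke for upgrading $p\in\phi(I)A$ to $(p)=\phi(I)A$ is really \cite[Lemma 2.24]{Prismatic} on distinguished elements --- worth citing explicitly since the same lemma is used twice.
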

\begin{proof}
  Working locally, we can assume that $I=(\g)$ is principal. Recall that $V\colon A/I\to A/I_1$ can be given by multiplication by $\phi(\g)/\delta(\g)$. Thus, the statement that $\m A$ is cohomological, i.e.\ $V(1)=p$, means that
  \[ \frac{\phi(\g)}{\delta(\g)} \equiv p \bmod I_1 .\]
  (For a general prism, this is only true mod $I$.) This gives
  \begin{align*}
    \frac{\g^p+p\delta(\g)}{\delta(\g)} &= p + \g\phi(\g)x\\
    \frac{\g^p}{\delta(\g)} &= \g\phi(\g)x\\
    \g^{p-1} &= \phi(\g)x\delta(\g)
  \end{align*}
  On the other hand, we can write
  \[ p=\phi(\g)(\delta(\g)^{-1}-\g x) \]
  which shows that $\phi(\g)A=(p)$ by \cite[Lemma 2.24]{Prismatic}. Combining these, we have $p\mid\g^{p-1}$. When $p=2$, this means $p\mid\g$, so $(p)=(\g)$ by another application of \cite[Lemma 2.24]{Prismatic}.
\end{proof}

\begin{remark}
  In one failed attempt to prove Conjecture \ref{conj:crystalline}, we discovered the identity
  \[ \delta(f^{n+1}) = \delta(f)\sum_{i=0}^n \phi(f)^{n-i} f^{pi}. \]
  This can be proved by induction using the identity
  \[ \delta(fg) = \phi(f)\delta(g) + \delta(f)g^p. \]
  Sadly, this identity plays no role in the present paper; but it is a nice identity, so we wanted to share it.
\end{remark}

\begin{theorem}
A prism $(A,I)$ is perfect if and only if the de Rham-Witt comparison map
\[ \m W(A/I) \to \m A \]
constructed in \sec\ref{sec:drw} is an equivalence.
\end{theorem}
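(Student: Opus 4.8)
The plan is to reduce both implications to pinning down the single ring map $\bar\phi\colon A/I\to A/\phi(I)A$ induced by the Frobenius of $A$, recognizing it Tambara-theoretically as the reduced norm $N^{C_p}_e$ of $\m A$. I work locally, so that $I=(d)$ is principal, and I use that the comparison map of \sec\ref{sec:drw} is by construction a morphism of Tambara functors restricting to $\id_{A/I}$ on $\m A(G/e)=A/I$. By Brun's theorem \cite{BrunWitt}, $\m W(A/I)$ is the free Tambara functor on $A/I$, so this comparison map is the counit of the associated adjunction; hence it is an equivalence precisely when $\m A$ is itself free on $A/I$, i.e.\ when $\m A\eqv\m W(A/I)$.

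\emph{Perfect $\Rightarrow$ equivalence.} It suffices to check that $W_n(A/I)\to A/I_n$ is an isomorphism for all $n$, compatibly, and I would induct on $n$ (the case $n=0$ being $\id_{A/I}$). For a perfect prism $\phi$ is an automorphism of $A$, so each $\bar\phi^i\colon A/I\iso A/\phi^i(I)$; then the transversal-coordinate embedding $c_\trans\colon A/I_n\mono\prod_{i=0}^{n}A/\phi^i(I)\cong(A/I)^{n+1}$ of Construction~\ref{cons:main} presents $A/I_n$ as the subring cut out by the congruences of Lemma~\ref{lem:trans-coords}; using $\phi$ (an automorphism here) as the Frobenius lift on the various $A/\phi^i(I)$, these become exactly the Dwork-type congruences cutting out the image of the ghost map of $W_n(A/I)$ inside $(A/I)^{n+1}$ (legitimate since $A/I$ is perfectoid, hence $p$-torsion-free). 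Under the dictionary $t_i=\phi^i(w_{n-i})$ the comparison map is, up to reindexing, the identity, and the formulas for $F$, $V$, $N$ in transversal coordinates (Construction~\ref{cons:main}) match those for $\m W$ in ghost coordinates (Example~\ref{ex:witt}). This direction essentially re-derives Fontaine's identification $W_n(\text{an untilt})\cong\Ainf/(\text{a suitable product of Frobenius twists of }d)$, the only new point being compatibility with the Tambara structure, which is automatic once the comparison map is a Tambara morphism.

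\emph{Equivalence $\Rightarrow$ perfect.} For this I use only the level-$1$ isomorphism. Since the only proper subgroup of $C_p$ is trivial, $\P_{C_p}=\im(\tr^{C_p}_e)=\im(V\colon A/I\to A/I_1)=\pi_1\cdot(A/I_1)$; as $\pi_1=u_1\phi(d)$ with $u_1$ a unit, $A/I_1/\P_{C_p}=A/\phi(d)A=A/\phi(I)A$, and from $N(x)=\phi(\tilde x)-\pi_1\delta(\tilde x)$ (Lemma~\ref{lem:n-lift-p}) one reads off that $N^{C_p}_e$ induces exactly $\bar\phi$ on the quotient by $\P_{C_p}$. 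On the Witt side, $N^{C_p}_e$ sends $r\in W_0(A/I)=A/I$ to its Teichm\"uller representative $[r]\in W_1(A/I)$ (Example~\ref{ex:witt}), which modulo $\im V$ is again $r$; so the reduced norm of $\m W(A/I)$ is $\id_{A/I}$. Because the comparison map intertwines $N$ and the transfers and is $\id_{A/I}$ at the bottom, passing to the quotient by $\P_{C_p}$ identifies $\bar\phi$ with the isomorphism induced there by the comparison isomorphism; hence $\bar\phi\colon A/I\iso A/\phi(I)A$.

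It then remains to deduce perfectness of $(A,I)$, i.e.\ that $\phi\colon A\to A$ is an isomorphism, from the fact that $\bar\phi$ is one, which I would do by $(p,I)$-adic approximation. Surjectivity: given $a\in A$, use surjectivity of $\bar\phi$ to pick $b_0$ with $\phi(b_0)\equiv a\bmod\phi(d)A$, and inductively correct $b_k$ by an element of $d^kA$ so that $\phi(b_{k})\equiv a\bmod\phi(d)^{k+1}A$; since $\phi(d)\in(p,I)$, $A$ is $(p,I)$-complete, and $\phi$ is continuous, $\lim_k b_k$ exists and maps to $a$. Injectivity: if $\phi(a)=0$ then $\bar\phi(a\bmod I)=0$, so $a\in I$, say $a=db$; then $\phi(d)\phi(b)=0$, and $\phi(d)$ is a non-zero-divisor ($d\phi(d)$ generates $I_1$ and $A$ is $I_1$-torsion-free), so $\phi(b)=0$; iterating, $a\in\bigcap_n I^n=0$. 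I expect the main obstacle to lie in the ``perfect $\Rightarrow$ equivalence'' induction --- matching the congruences of Lemma~\ref{lem:trans-coords} against those defining truncated Witt vectors and checking the norm in transversal coordinates, essentially a hands-on re-proof of Fontaine's theory --- while the reductions through Brun's freeness and the level-$1$ reduced norm, and the closing approximation argument, should be routine.
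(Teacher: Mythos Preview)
Your ``if'' direction is essentially the paper's. Both extract from the Tambara equivalence that the map $\bar\phi\colon A/I\to A/\phi(I)$ induced by Frobenius is an isomorphism (the paper writes this as the composite $R\circ N$, where $R$ is the quotient $A/I_1\to A/\phi(I)$; you write it as the norm modulo $\P_{C_p}$---these are the same map), and then run the same $(p,I)$-adic approximation to conclude that $\phi\colon A\to A$ is bijective. Your route to ``$\phi(d)$ is a non-zerodivisor'', via invertibility of $I_1$ (hence $A$ is $I_1$-torsionfree), is actually more economical than the paper's: the paper first passes to the limit over $F$ to identify $A\cong W((A/I)^\flat)$ and then invokes \cite[Lemma~2.34]{Prismatic}, whereas you only need the level-$1$ isomorphism plus a general prism fact.

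There is, however, a genuine error in your ``only if'' sketch. Perfectoid rings are \emph{not} $p$-torsion-free in general: for the perfect prism $(\Z_p,(p))$ one has $A/I=\F_p$. So for non-transversal perfect prisms the map $c_\trans$ fails to be injective, and your matching of Dwork congruences with Lemma~\ref{lem:trans-coords} collapses. The paper does not prove this direction either, simply declaring it well-known. If you want an argument that avoids transversality, apply the five lemma to
\[
  0\to W_{n-1}(R)\xra V W_n(R)\to R\to 0,
  \qquad
  0\to A/I_{n-1}\xra V A/I_n\to A/\phi^n(I)\to 0,
\]
the second sequence being exact because $\pi_n$ is a non-zerodivisor (same $I_n$-invertibility reasoning). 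The comparison map respects $V$, and by the lift formula of Lemma~\ref{lem:n-lift-p} one has $N^n_0(a_0)\equiv\phi^n(a_0)\bmod\phi^n(I)$, so the induced map on cokernels is $\bar\phi^n\colon A/I\to A/\phi^n(I)$, an isomorphism when $\phi$ is. Induction on $n$ finishes.
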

\begin{proof}
  The ``only if'' direction is well-known, so suppose that $\m W(A/I)\to \m A$ is an equivalence. We will show that $\phi\colon A\to A$ is an isomorphism. Working locally, we can assume that $I=(d)$ is principal.

  Taking the limit over $F$, we get an isomorphism $W((A/I)^\flat) \iso A$. Since $(A/I)^\flat$ is a perfect $\F_p$-algebra, we can apply \cite[Lemma 2.34]{Prismatic} to get that $\phi(d)$ is a non-zerodivisor.

  Let us write $\phi_0\colon A/I\to A/\phi(I)$ for the map induced by $\phi$, in order to distinguish it from $\phi\colon A\to A$. Note that $\phi_0$ is the composite
  \[\xymatrix@ur{
    A/I \ar[d]_N \ar[dr]^-{\phi_0}\\
    A/I_1 \ar[r]_-R & A/\phi(I)
  }\]
  Since this composite is always an isomorphism for the Witt vectors Tambara functor, we have that $\phi_0$ is an isomorphism.

  Suppose that $\phi(f)=0$. Since $\phi_0$ is injective, we get $f=dg$ for some $g$. But now $0=\phi(f)=\phi(d)\phi(g)$; since $\phi(d)$ is a non-zerodivisor, we get $\phi(g)=0$. Iterating, we get $f\in(\phi(I)A)^\infty$. Since $A$ is $\phi(I)$-adically separated, this shows $f=0$.

  Next let $f\in A$ be arbitrary. Since $\phi_0$ is surjective, we can write $f=\phi(g_0) + \phi(d) h_0$. We can then write $h_0=\phi(g_1)+\phi(d) h_1$. Iterating this process, we have
  \begin{align*}
    f &= \phi(g_0) + \phi(d)h_0\\
      &= \phi(g_0 + dg_1) + \phi(d)^2 h_1\\
      &= \phi(g_0 + dg_1 + d^2g_2) + \phi(d)^3 h_2\\
      &= \dots
  \end{align*}
  Since $A$ is complete with respect to both $I$ and $\phi(I)A$, this converges to a preimage of $f$ for $\phi$.
\end{proof}

\section{The perfect sandwich}
\label{sec:drw}

In this section, we apply our results to construct natural maps
\[
  W_m(A/I_n) \xra c A/I_{m+n} \xra{\delta_\bullet} W_m(A/\phi^m(I_n)),
\]
and prove some results about them. We refer to this diagram as the ``perfect sandwich'' since the outer terms are related (by taking the limit over $m$) to inverse and direct perfection, respectively. 

\begin{remark}
There is a direct way to construct these maps in the case $m=1,n=0$. For any prism $(A,I)$, we have a commutative diagram
\[\xymatrix{
  W_1(A/I) \ar@{..>}[r]^-c \ar@/^2em/[rr]^-F \ar[d]_-R \pullback & A/I_1 \ar[r] \ar[d] \pullback & A/I \ar[r]^-\phi \ar[d]^-\can & A/\phi(I) \ar[d]\\
  A/I \ar[r]_-\phi & A/\phi(I) \ar[r]_-\can & {\underbrace{A/I,\phi(I)}_{{}=A/I,p}} \ar[r]_-\phi & {\underbrace{A/\phi(I),\phi^2(I)}_{{}=A/\phi(I),p}}
}\]
and the indicated squares are pullbacks when $(A,I)$ is transversal. The map $c$ is induced by the universal property of $A/I_1$; in other words, the point is that the Frobenius $A/I\to (A/I)/p$ factors over $A/\phi(I)$. The rectangle formed by the middle and right squares induces the map $A/I_1 \to W_1(A/\phi(I))$.

Over the Breuil-Kisin prism, this becomes
\[\xymatrix{
  W_1(\O_K) \ar[r] \ar[d]_-R & \BK/I_1 \ar[r] \ar[d] & \O_K \ar[r]^-\can \ar[d] & \O_K[\pi^{1/p}] \ar[d]^-\can\\
  \O_K \ar[r]_-\can & \O_K[\pi^{1/p}] \ar[r]_-{x\mapsto x^p} & \O_K/p \ar[r]_-\can & \O_K[\pi^{1/p}]/p
}\]

Contemplating this diagram, we see that
\[ \BK/I_1 = \{(w_0, w_1) \in W_1(\O_K[\pi^{1/p}]) \mid w_1\in\O_K \}. \]
\end{remark}

\subsection{de Rham-Witt comparison}
\label{sub:dRW}

Typically, the only way to prove anything about Witt vectors is to use ghost coordinates ($w_i$). However, Witt coordinates ($a_i$) are amenable from the Tambara point of view: we have
\[ (a_0,\dotsc,a_n) = \sum_{i=0}^n V^i N^{n-i} (a_i). \]

As a consequence, we can restrict $\m A$ to $C_{p^n}$, extend it back up by Witt vectors on $A/I_n$, and the result will map naturally to $\m A$. We write this as $W^G_n\res^G_n \m A$. Here are two examples of this:
\[
  \vcenter{\xymatrix{
    W_2(A/I) \mt \ar[r] & A/I_2 \mt\\
    W_1(A/I) \mt \ar[r] & A/I_1 \mt\\
    A/I \ar@{=}[r] & A/I
  }}
  \qquad
  \vcenter{\xymatrix{
    W_1(A/I_1) \mt \ar[r] & A/I_2 \mt\\
    A/I_1 \mt \ar@{=}[r] & A/I_1 \mt\\
    A/I \ar@{=}[r] & A/I
  }}
\]
For $n=0$, this is a map $\m W(A/I)\to \m A$, and the components of this map were constructed by Molokov \cite{Molokov}. Our construction characterizes this map by a universal property, and shows that it commutes with $F,V,N$.

\begin{lemma}
  Using ghost coordinates on the source and transversal coordinates on the target, our comparison map
  \[ c\colon W_m(A/I_n) \to A/I_{m+n} \]
  is given by
  \[ c(w_0,\dotsc,w_m) = (\underbrace{w_m,\dotsc,w_m}_{n+1\text{ times}}, \phi(w_{m-1}), \dotsc, \phi^m(w_0)). \]
\end{lemma}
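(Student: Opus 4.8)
The plan is to reduce everything to Brun's presentation of the free Tambara functor together with the explicit transversal-coordinate formulas of Construction~\ref{cons:main}. We may assume $A$ is transversal, since transversal coordinates require this. The key input is the identity $(a_0,\dots,a_m) = \sum_{i=0}^m V^i N^{m-i}(a_i)$, which expresses the Witt vector in $W_m(A/I_n)$ with Witt coordinates $a_i\in A/I_n$ in terms of the Tambara operations of $W^G_n\res^G_n\m A$ running from level $n$ to level $n+m$. Since $c$ is a map of Tambara functors which is the identity on levels $\le n$ (this is precisely the universal-property characterization recalled above), applying $c$ gives $c(a_0,\dots,a_m) = \sum_{i=0}^m V^i N^{m-i}(a_i)$, where now $V^i N^{m-i}\colon A/I_n\to A/I_{m+n}$ denotes the structure maps of $\m A$ itself. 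So the entire computation takes place inside $\m A$, and reduces to evaluating this sum in transversal coordinates and recognizing the result.

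The first step is to record the transversal coordinates of an iterated norm. Iterating $N(t_0,\dots,t_{k-1}) = (t_0^p,\dots,t_{k-1}^p,\phi(t_{k-1}))$ starting from $c_\trans(a) = (a,\dots,a)\in\prod_{j=0}^n A/\phi^j(I)$, one finds that $N^\ell(a)\in A/I_{n+\ell}$ has $j$-th transversal coordinate $a^{p^\ell}$ for $0\le j\le n$ and $\phi^{j-n}(a)^{p^{\ell-(j-n)}}$ for $n<j\le n+\ell$; this is also immediate from the congruence characterization of $N$ in the proof of Lemma~\ref{lem:n-lift-p}. Applying $V^i$ multiplies every transversal coordinate by $p^i$ and appends $i$ zeros, again by Construction~\ref{cons:main}. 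Hence $V^i N^{m-i}(a_i)\in A/I_{m+n}$ has $j$-th coordinate equal to $p^i a_i^{p^{m-i}}$ for $0\le j\le n$, to $p^i\phi^{j-n}(a_i)^{p^{m-i-(j-n)}}$ for $n<j\le n+m-i$, and to $0$ for $j>n+m-i$.

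The second step is to sum over $i$ and recognize ghost coordinates via $w_\ell = \sum_{i=0}^\ell p^i a_i^{p^{\ell-i}}$. In coordinates $0\le j\le n$ the sum is $\sum_{i=0}^m p^i a_i^{p^{m-i}} = w_m$, giving the first $n+1$ entries. In coordinate $j=n+k$ with $1\le k\le m$, only the terms with $i\le m-k$ survive, and since $\phi$ is a ring homomorphism,
\[ \sum_{i=0}^{m-k} p^i\,\phi^k(a_i)^{p^{m-k-i}} \;=\; \phi^k\!\Big(\sum_{i=0}^{m-k} p^i a_i^{p^{m-k-i}}\Big) \;=\; \phi^k(w_{m-k}). \]
This is exactly the vector $(\underbrace{w_m,\dots,w_m}_{n+1},\phi(w_{m-1}),\dots,\phi^m(w_0))$.

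The one step I would be most careful about is the first: ensuring the identity $(a_0,\dots,a_m)=\sum V^i N^{m-i}(a_i)$ is applied in the \emph{relative} setting, with $A/I_n$ sitting at level $n$ rather than at the bottom, and that $c$ genuinely restricts to the identity on levels $\le n$ so that the norms and transfers transport verbatim from $W^G_n\res^G_n\m A$ to $\m A$. Everything after that is the indexing bookkeeping above. As a consistency check one can confirm that the resulting formula intertwines $F$, $V$, $N$ using the Witt-vector expressions for these in ghost coordinates (Example~\ref{ex:witt}) against Construction~\ref{cons:main}, and that it recovers the $n=0$ case recorded in the remark following Lemma~\ref{lem:trans-coords}.
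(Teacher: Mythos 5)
Your proof is correct and takes essentially the same route as the paper's: both decompose $x$ via $\sum_{i=0}^m V^i N^{m-i} a_i(x)$, push $V^i N^{m-i}$ through transversal coordinates using the formulas from Construction~\ref{cons:main}, and then recognize the resulting sums as $\phi^{\mz{j-n}}w_{m-\mz{j-n}}$. Your version just unpacks the $N^\ell$ step more explicitly where the paper compresses it into a single Iverson-bracketed line.
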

\begin{proof}
  For an element $x\in W_m(A/I_n)$, let us write $a_i(x)$ for its Witt coordinates and $w_i(x)$ for its ghost coordinates. Similarly, we write $t_i(y)$ for the transversal coordinates of an element $y\in A/I_{m+n}$. Note that $\min(j,n) = j-\mz{j-n}$. We have
  \begin{align*}
    t_j(c(x))
      &= t_j \sum_{i=0}^m V^i N^{m-i} a_i(x)\\
      &= \sum_{i=0}^m [j<m+i] p^i t_j (N^{m-i} a_i(x))\\
      &= \sum_{i=0}^m [j<m+n-i] p^i \phi^{\mz{j-n}}(t_{j-\mz{j-n}}(a_i(x))^{p^{m-i-\mz{j-n}}})\\
      \intertext{We will now suppress the $t_{j-\mz{j-n}}$ since it no longer matters. Continuing to rewrite, we get}
      &= \phi^{\mz{j-n}} \sum_{i=0}^{m-\mz{j-n}} p^i a_i(x)^{p^{m-i-\mz{j-n}}}\\
      &= \phi^{\mz{j-n}} w_{m-\mz{j-n}}(x). \qedhere
  \end{align*}
\end{proof}

\begin{corollary}
When $n=0$, our de Rham-Witt comparison map $c$ agrees with that constructed by Molokov.
\end{corollary}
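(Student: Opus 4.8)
The plan is to put both maps into explicit coordinates and observe that they coincide. Recall that Molokov \cite{Molokov} constructs the individual maps $W_m(A/I)\to A/I_m$ directly, level by level; the contribution of this section is to assemble them into a map of Tambara functors $\m W(A/I)\to\m A$ with a universal property and to prove compatibility with $F$, $V$, $N$. So the corollary asserts that, for each $m$, our $c$ equals Molokov's map.

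First I would record the explicit formula for our $c$ coming from the Lemma above, specialized to $n=0$ (so that $\mz j=j$ for $0\le j\le m$): in ghost coordinates on the source and with the quotient maps $A/I_m\to A/\phi^j(I)$ on the target, $c$ is the assignment $(w_0,\dots,w_m)\mapsto(w_m,\phi(w_{m-1}),\dots,\phi^m(w_0))$, i.e.\ its $j$-th component is the Frobenius-twisted ghost polynomial $\phi^j(\sum_{i=0}^{m-j} p^i a_i^{p^{m-j-i}})$ in Witt coordinates $(a_0,\dots,a_m)$. For a transversal prism the maps $A/I_m\to A/\phi^j(I)$ are jointly injective by \cite[Lemma 3.7]{AClBTrace}, so they determine $c$; the general case then follows from naturality in $(A,I)$ and the Kan-extension definition of $\m A$. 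It remains to match this description with Molokov's construction and check that the two agree component by component.

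The \textbf{main obstacle} is bookkeeping rather than mathematical content: Molokov phrases his comparison one fixed-point level at a time and with his own normalizations for the Frobenius twists and for the labelling of the factors $A/\phi^j(I)$ inside $A/I_m=A/I\phi(I)\cdots\phi^m(I)$. The real task is to translate between his conventions and ours until the identification ``Molokov's $j$-th component equals $\phi^j$ applied to the $(m-j)$-th ghost component'' is manifest; once that is in place, the Lemma immediately delivers the equality of maps.
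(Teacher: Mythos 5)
Your proposal is correct and follows essentially the same route as the paper: specialize the preceding Lemma to $n=0$ to get the formula $(w_0,\dots,w_m)\mapsto(w_m,\phi(w_{m-1}),\dots,\phi^m(w_0))$ in transversal coordinates, then observe (using that Molokov also reduces to transversal prisms) that a close reading of his construction yields the same formula. The paper's proof is just as terse about the bookkeeping translation as you are, so there is no gap to close.
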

\begin{proof}
Molokov works by reduction to the case of transversal prisms. While this formula does not appear explicitly in \cite{Molokov}, a close reading of his construction shows that it is given by
\[ (w_0,\dotsc,w_n) \mapsto (w_n,\phi(w_{n-1}),\dotsc,\phi^n(w_0)).\qedhere \]
\end{proof}

\begin{remark}
  Molokov uses the map $W_m(A/I)\to A/I_m$ to construct a de Rham-Witt comparison map
  \[
    W_m\Omega^{*,\cont}_{S/(A/I)} \ctnsr^L_{W_m(A/I)} A/I_m
    \to
    H^*(\prism_{S/A}/I_m)
  \]
  and shows this to be an equivalence when $A$ is perfect transversal. Can we do something similar for the maps $W_m(A/I_n)\to A/I_{m+n}$?

  We can try to do so as follows. Fix a base prism $A$, let $G=C_{p^\infty}$, let $H=C_{p^n}$, and let $\m S$ be an $H$-Tambara functor which is a $\res^G_H\m A$-algebra. We can define a relative prismatic site $(\m S/A)_\prism$ using maps $S\to\res^G_H\m B$ for prisms $B$ under $A$:
  \[\xymatrix@R=1.5em{
    A \ar@{-}[dd]|-{\hole\vdots\hole} \ar[rr] && B \ar@{-}[dd]|-{\hole\vdots\hole}\\
    \\
    A/I_2 \ar@{-}[d] && B/I_2\ar@{-}[d]\\
    A/I_1 \ar@{-}[d] \ar[r] & S^{C_p} \ar@{-}[d] \ar[r] & B/I_1 \ar@{-}[d]\\
    A/I \ar[r] & S^e \ar[r] & B/I
  }\]

  With this definition, we obtain natural maps
  \[ W_m(S^{C_{p^n}}) \to B/I_{m+n} \]
  inducing
  \[
    W_m(S^{C_{p^n}})
    \to
    H^0(\prism_{\m S/A}/I_{m+n})
    \hookrightarrow
    H^*(\prism_{\m S/A}/I_{m+n})
  \]
  and thus
  \[
    W_m\Omega^{*,\cont}_{S^H/A^H} \ctnsr^L_{W_m(A^H)} \to H^*(\prism_{\m S/A}/I_{m+n}).
  \]
  This notion of prismatic cohomology might arise as the associated graded of a motivic filtration on the relative $\THH$ defined by Angeltveit-Blumberg-Gerhardt-Hill-Lawson-Mandell \cite{ABGHLM}.

  However, observe that this notion of prismatic cohomology vanishes for the fixed-point Tambara functor
  \[
    \m{\F_{p^2}}
    =
    \left(
    \vcenter{\xymatrix{
      \F_p \ar@{-}[d]\\
      \F_{p^2}
    }}
    \right)
  \]
  with the usual Galois action on $\F_{p^2}$, since $p\notin I_1$ for any nonzero prism. This shows that at least one of the following is true:
  \begin{itemize}
    \item this is the wrong definition of prismatic cohomology of Tambara functors;
    \item $\m{\F_{p^2}}$ is the wrong kind of input for this cohomology theory.
  \end{itemize}
\end{remark}

We can also give a direct formula for $c$ using the lifts of Lemma \ref{lem:n-lift-p}. Recall that these are given by
\begin{align*}
  \~V^{m+n}_n(f) &= \pi_{n+1}\dotsm \pi_{m+n}\\
  \~N^{m+n}_n(f) &= f^{\phi^m} - \sum_{i=1}^m \~V^{m+n}_{m+n-i}\theta_i(f)^{\phi^{m-i}}.
\end{align*}

\begin{lemma}
  \label{lem:c-witt}
  In Witt coordinates, the comparison map
  \[ c\colon W_m(A/I_n) \to A/I_{m+n} \]
  is given by
  \[
    c(a_0,\dotsc,a_m) =
    \sum_{k=0}^m
      V^{m+n}_{m+n-k}\left(
        a_k - \sum_{i=1}^k \theta_i(a_{k-i})
      \right)^{\phi^{m-k}} \]
\end{lemma}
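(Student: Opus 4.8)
The plan is to compute $c$ in Witt coordinates by combining the expansion $(a_0,\dotsc,a_m)=\sum_{k=0}^m V^{m+n-k}N^{m-k}(a_k)$ of a Witt vector into norms and transfers with the explicit lifts $\~V^{m+n}_{m+n-k}$ and $\~N^{m+n}_{m+n-k}$ of Lemma \ref{lem:n-lift-p}. Since $c$ is a Tambara-functor map, it commutes with $F$, $V$, and $N$, and it is the identity on the bottom ring $A/I$. Therefore $c$ carries $V^{m+n-k}N^{m-k}(a_k)$ in $W_m(A/I_n)$ to $V^{m+n}_{m+n-k}N^{m+n-k}_n(a_k \bmod I_n)$ in $A/I_{m+n}$, where I am using that $c$ on the $C_{p^n}$-level is (by construction) the identity $A/I_n \to A/I_n$. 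Summing over $k$ and using additivity of $c$ gives
\[
  c(a_0,\dotsc,a_m) = \sum_{k=0}^m V^{m+n}_{m+n-k} N^{m+n-k}_n(a_k \bmod I_n).
\]
So the content of the lemma is to identify $N^{m+n-k}_n(a_k)$, the $(m-k)$-fold iterated norm, with $\bigl(a_k - \sum_{i=1}^k \theta_i(a_{k-i})\bigr)^{\phi^{m-k}}$ modulo the relevant ideal — but wait, the index on the $\theta$-sum runs up to $k$, not $m-k$, which signals that the norm alone is not the whole story and the transfer corrections from different $k$'s must be reorganized.

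The cleaner route, which I would actually pursue, is to verify the claimed formula by checking it in transversal coordinates against the formula of the first Lemma in \sec\ref{sub:dRW}, namely $c(w_0,\dotsc,w_m) = (w_m,\dotsc,w_m,\phi(w_{m-1}),\dotsc,\phi^m(w_0))$. Concretely: apply $t_j$ to the right-hand side of the asserted formula, use the transversal-coordinate description of $V^{m+n}_{\ell}$ (namely $V$ multiplies by $p$ and appends zeros, so $t_j V^{m+n}_{m+n-k}(z) = 0$ unless $j < m+n-k$, in which case it is $p^{\,?}$ times an appropriate Frobenius-twisted coordinate of $z$) together with the transversal description of the iterated norm from the proof of Lemma \ref{lem:n-lift-p}. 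One then must show that the resulting sum telescopes to $\phi^{\mz{j-n}} w_{m-\mz{j-n}}(a_0,\dotsc,a_m)$, i.e. to $\phi^{\mz{j-n}}\bigl(\sum_{i} p^i a_i^{p^{\,m-\mz{j-n}-i}}\bigr)$. The appearance of $\theta_i$ here is explained by the defining relation $\phi(f^{p^{n-1}}) = f^{p^n} + p^n\theta_n(f)$ from the Definition preceding Lemma \ref{lem:n-lift-p}, together with the iterated version $\phi^n(f) = f^{p^n} + \sum_{i=1}^n p^i\theta_i(f)^{\phi^{n-i}}$ in the Proposition: inverting this relation to solve for a "norm coordinate" in terms of a "ghost-like coordinate" is exactly what converts $a_k$ into $a_k - \sum_{i=1}^k \theta_i(a_{k-i})$, and the upper limit $k$ (rather than $m-k$) comes from the fact that the $i$-fold correction lands on the coordinate $a_{k-i}$.

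The main obstacle I anticipate is the bookkeeping: one has two different but equivalent descriptions of $c$ — the Tambara-theoretic "$\sum V^i N^{m-i}$" description and the transversal-coordinate "backward shift of Frobenii" description — and the asserted formula is a third, hybrid description that mixes Witt coordinates on the source with $V$ and $N$ operations on the target. Reconciling these requires carefully expanding $\theta_i$ of a sum, keeping track of which terms survive modulo $\phi^{n+\ell}(I)$ at each level $\ell$, and checking that the cross-terms among the $\theta_i$'s (coming from $\delta$ of a product, cf.\ the identity $\delta(fg) = \phi(f)\delta(g) + \delta(f)g^p$ quoted earlier) cancel correctly. I would organize this by fixing $j$, splitting on whether $j \le n$ (so $\mz{j-n}=0$) or $j > n$, and in each case inducting on $m$; the base case $m=1,n=0$ is essentially the diagram in the first Remark of \sec\ref{sec:drw}. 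Once the transversal-coordinate check is done for transversal prisms, the general case follows because $c$ for an arbitrary prism is obtained by base change from a transversal one, exactly as in Construction \ref{cons:main}, and both sides of the asserted formula are manifestly compatible with that base change.
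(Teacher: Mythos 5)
Your first paragraph is the paper's proof, up to a typo ($V^{m+n-k}$ should be $V^k$ in the Witt-coordinate expansion $(a_0,\dotsc,a_m)=\sum_k V^k N^{m-k}(a_k)$): starting from
\[
  c(a_0,\dotsc,a_m) = \sum_{k=0}^m V^{m+n}_{m+n-k}\, N^{m+n-k}_n(a_k)
\]
and substituting the explicit lifts from Lemma \ref{lem:n-lift-p} is exactly what the paper does. You then correctly observe that the $\theta$-sum in the stated formula runs to $k$ (hitting $a_{k-i}$), not to $m-k$, and conclude that ``the transfer corrections from different $k$'s must be reorganized'' --- but you stop there and pivot to a transversal-coordinate verification which you also do not carry out. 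The gap is precisely the reorganization you named but didn't do: after substituting $\~N^{m+n-k}_n(a_k)=a_k^{\phi^{m-k}}-\sum_{i=1}^{m-k}\~V^{m+n-k}_{m+n-k-i}\theta_i(a_k)^{\phi^{m-k-i}}$ and composing $V^{m+n}_{m+n-k}\,V^{m+n-k}_{m+n-k-i}=V^{m+n}_{m+n-(k+i)}$, the correction terms form a double sum over $\{(k,i): 0\le k\le m,\ 1\le i\le m-k\}$ in which each summand carries total $V$-depth $k+i$; re-indexing by $k'=k+i$ (so $1\le i\le k'\le m$) gives $\sum_{k'=0}^m V^{m+n}_{m+n-k'}\sum_{i=1}^{k'}\theta_i(a_{k'-i})^{\phi^{m-k'}}$, and pulling the Frobenius $\phi^{m-k'}$ outside the bracket (it is a ring map) produces the stated formula. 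This is a three-line manipulation, not a sign that the algebraic route fails.

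Your proposed detour through transversal coordinates would also work but is strictly more effort: you would be re-deriving the content of Lemma \ref{lem:n-lift-p} inline instead of using it, and you would have to handle the $\mz{j-n}$ case split and a separate base-change argument. The paper's point in stating Lemma \ref{lem:n-lift-p} separately is exactly so that this computation reduces to a formal sum rearrangement; your instinct to reach for it in the first paragraph was right, and you should have trusted it through the re-indexing rather than declaring the hybrid ``Witt-coordinates-on-source, $V,N$-on-target'' description suspicious. One smaller point: the claim that $c$ is the identity at the bottom should be stated at level $A/I_n$ (i.e.\ $c\colon W_0(A/I_n)\to A/I_n$ is the identity), not at $A/I$, since the relevant restriction is to $C_{p^n}$, not to the trivial group.
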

\begin{example}
  For $m=2$, we have
\[
  c(a_0,a_1,a_2)
  =
  \sum\left(\vcenter{\xymatrix@1@=1em{
  a_0^{\phi^2} & \gray{-V^2_1}\theta_1(a_0)^\phi & \gray{-V^2_0}\theta_2(a_0)\\
    & \gray{V^2_1} a_1^\phi & \gray{-V^2_0}\theta_1(a_1)\\
    && \gray{V^2_0} a_2
  }}\right)
\]
\end{example}
\begin{proof}
  We compute
  \begin{align*}
    c(a_0,\dotsc,a_m)
      &=
      \sum_{k=0}^m V^{m+n}_{m+n-k} N^{m+n-k}_n(a_k)\\
      &= \sum_{k=0}^m V^{m+n}_{m+n-k}\left(
        a_k^{\phi^{m-k}} - \sum_{i=1}^{m-k} V^{m+n-k}_{m+n-k-i} \theta_i(f)^{\phi^{m-k-i}}\right)\\
      &= \sum_{k=0}^m V^{m+n}_{m+n-k}\left(
        a_k^{\phi^{m-k}} - \sum_{i=1}^k \theta_i(a_{k-i})^{\phi^{m-k}}\right)\\
        &= \sum_{k=0}^m V^{m+n}_{m+n-k}\left(
          a_k - \sum_{i=1}^k \theta_i(a_{k-i})\right)^{\phi^{m-k}}\qedhere
\end{align*}
\end{proof}

This can be used to give a cute characterization of the image of $c$.

\begin{definition}
  The \emph{prismatic ghost polynomials} are
  \begin{align*}
    v_0^{(n)} &= b_0\\
    v_1^{(n)} &= b_0^\phi + V^{n+1}_nb_1\\
    v_2^{(n)} &= b_0^{\phi^2} + V^{n+2}_{n+1} b_1^\phi + V^{n+2}_n b_2\\
    &\dots\\
    v_m^{(n)} &= \sum_{k=0}^m V^{m+n}_{m+n-k} b_k^{\phi^{m-k}}
  \end{align*}
\end{definition}

\begin{proposition}
  \label{prop:image-c}
  The image of $W_m(A/I_n)\to A/I_{m+n}$ agrees with the image of the prismatic ghost polynomial $v^{(n)}_m\colon (A/I_n)^{\times m}\to A/I_{m+n}$.
\end{proposition}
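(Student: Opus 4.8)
The plan is to compare the two maps in question directly via their formulas in transversal coordinates, using the description from Lemma~\ref{lem:c-witt} and the analogous computation for the prismatic ghost polynomials. First I would unwind what the prismatic ghost polynomial map actually is as a map $(A/I_n)^{\times m+1} \to A/I_{m+n}$: by definition $v^{(n)}_m(b_0,\dots,b_m) = \sum_{k=0}^m V^{m+n}_{m+n-k} b_k^{\phi^{m-k}}$, where we interpret $b_k^{\phi^{m-k}}$ via a choice of lift and the map $V^{m+n}_{m+n-k} = \~V^{m+n}_{m+n-k}$ as multiplication by $\pi_{m+n-k+1}\dotsm\pi_{m+n}$. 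Running the same bookkeeping as in the proof of the first Lemma of \sec\ref{sub:dRW} (the one computing $c$ in ghost coordinates), I would compute $t_j(v^{(n)}_m(b))$ for each $0\le j\le m+n$: the term indexed by $k$ contributes precisely when $j < m+n-k$, i.e.\ when $k < m+n-j$, and in that range $t_j(V^{m+n}_{m+n-k} b_k^{\phi^{m-k}}) = \phi^{\mz{j-n}}(t_{j-\mz{j-n}}(b_k)^{p^{m-k-\mz{j-n}}})$ by the transversal-coordinate formulas for $V$ and $N$. Collecting terms, $t_j(v^{(n)}_m(b)) = \phi^{\mz{j-n}}\big(\sum_{k=0}^{m-\mz{j-n}} p^k\, t_{j-\mz{j-n}}(b_k)^{p^{m-k-\mz{j-n}}}\big)$, which is exactly $\phi^{\mz{j-n}}$ applied to the $(m-\mz{j-n})$-th classical ghost polynomial in the $b_k$'s.

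Next I would compare this with the formula for $c$. From the first Lemma of \sec\ref{sub:dRW}, $c(w_0,\dots,w_m) = (w_m,\dots,w_m,\phi(w_{m-1}),\dots,\phi^m(w_0))$, i.e.\ $t_j(c(w)) = \phi^{\mz{j-n}}(w_{m-\mz{j-n}})$. So if I plug in $b_k = a_k(w)$, the Witt coordinates of $w$, the $(m-\mz{j-n})$-th ghost polynomial in the $b_k$'s is by definition $w_{m-\mz{j-n}}$, and the two formulas agree on the nose. This shows that $v^{(n)}_m$ factors as $W_m(A/I_n) \to (A/I_n)^{\times m+1} \xra{v^{(n)}_m} A/I_{m+n}$ composed with "Witt coordinates $\mapsto$ ghost coordinates after a shift," but more to the point it shows $\operatorname{im}(v^{(n)}_m) \supseteq \operatorname{im}(c)$: every element in the image of $c$ is $v^{(n)}_m$ evaluated at the Witt coordinates of its preimage. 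Conversely, for the reverse inclusion $\operatorname{im}(v^{(n)}_m) \subseteq \operatorname{im}(c)$, I would use that the map "Witt coordinates $\to$ $(b_0,\dots,b_m)$" — namely the identity, since $v^{(n)}_m$ takes an arbitrary tuple — means I just need: for any tuple $(b_0,\dots,b_m)$, there is $w\in W_m(A/I_n)$ with $a_k(w) = b_k$. But that is automatic: Witt coordinates are a bijection $(A/I_n)^{\times m+1} \iso W_m(A/I_n)$ of underlying sets. Hence $v^{(n)}_m(b_0,\dots,b_m) = c(w)$ where $w$ is the Witt vector with those Witt coordinates, giving equality of images.

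The main obstacle I expect is purely notational: keeping the various indices straight (the cutoff $k < m+n-j$, the exponent $m-k-\mz{j-n}$, the truncation level $m-\mz{j-n}$) and making sure the identification $v^{(n)}_m$ at Witt coordinates equals the $m-\mz{j-n}$-th ghost polynomial really is the \emph{classical} ghost polynomial $w_\ell = \sum_{k=0}^\ell p^k a_k^{p^{\ell-k}}$ and not some twisted variant. One subtlety worth flagging: the expression $b_k^{\phi^{m-k}}$ in the definition of $v^{(n)}_m$ only makes sense after lifting $b_k\in A/I_n$ to $A$, and the result in $A/I_{m+n}$ is independent of this lift precisely because $\~N$ descends to $N$ (Lemma~\ref{lem:n-lift-p}); I would remark on this to ensure $v^{(n)}_m$ is well-defined before comparing images. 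Everything else is the same ghost-coordinate manipulation already carried out for $c$, run in parallel.
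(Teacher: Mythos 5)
There is a genuine error in the central step of your computation. You claim that
\[
t_j\bigl(V^{m+n}_{m+n-k}\,b_k^{\phi^{m-k}}\bigr)=p^k\,\phi^{\mz{j-n}}\!\bigl(t_{j-\mz{j-n}}(b_k)^{p^{m-k-\mz{j-n}}}\bigr),
\]
but this is the transversal-coordinate formula for $V^{m+n}_{m+n-k}N^{m+n-k}_n(b_k)$, not for $V^{m+n}_{m+n-k}\phi^{m-k}(b_k)$. The element $\phi^{m-k}(b_k)$ is \emph{not} $N^{m+n-k}_n(b_k)$: by Lemma~\ref{lem:n-lift-p}, the lift of the norm is $\phi^{m-k}(b_k)-\sum_i\~V\,\theta_i(b_k)^{\phi^{m-k-i}}$, and the correction terms do not vanish mod $\phi^j(I)$ for $j<m-k+n$. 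Take $m=1$, $n=0$, $k=0$, $j=0$ to see the failure concretely: your formula gives $t_0(b_0)^p=b_0^p$, whereas the actual value is $\phi(b_0)\bmod I=b_0^p+p\delta(b_0)$, and $p\delta(b_0)\notin I$ for a general (non-crystalline) prism. As a result your claimed identity ``$v^{(n)}_m$ at Witt coordinates equals $c$'' is false; in the same example, $v^{(0)}_1(a_0,a_1)=\phi(a_0)+\pi_1 a_1$ while $c(a_0,a_1)=\phi(a_0)+\pi_1(a_1-\delta(a_0))$, which differ by $\pi_1\delta(a_0)$. Both inclusions in your proof rely on that identity, so the argument does not go through. (Your closing remark that $v^{(n)}_m$ is well-defined ``precisely because $\~N$ descends to $N$'' is symptomatic of the same confusion: $v^{(n)}_m$ is built from raw Frobenius powers, not from $N$, and its well-definedness is an independent and more elementary observation, namely $V^{m+n}_{m+n-k}\phi^{m-k}(I_n)\subset I_{m+n}$.)

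The paper's proof sidesteps the ghost-coordinate computation of $v^{(n)}_m$ entirely. Lemma~\ref{lem:c-witt} already expresses $c$ in Witt coordinates as $c(a_0,\dots,a_m)=\sum_k V^{m+n}_{m+n-k}\bigl(a_k-\sum_{i=1}^k\theta_i(a_{k-i})\bigr)^{\phi^{m-k}}$, which is literally $v^{(n)}_m$ evaluated at $b_k=a_k-\sum_{i=1}^k\theta_i(a_{k-i})$. The assignment $(a_k)\mapsto(b_k)$ is a triangular, hence invertible, change of variables on tuples of lifts in $A$, so after reducing mod $I_n$ the two maps have the same image. Your overall structure (compare the two maps explicitly, then use that a certain reparametrization is bijective) is in the right spirit, but the reparametrization has to be the $\theta$-twist coming from the norm lift, not the identity.
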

\begin{proof}
Set $b_k = a_k - \sum_{i=1}^k \theta_i(a_{k-i})$ in the statement of Lemma \ref{lem:c-witt}.
\end{proof}

\begin{remark}
  The inspiration for the prismatic ghost polynomials comes from topology. Suppose we are in the situation where a spherical lift $\S_A$ is available, so that $\m A$ arises as $\mpi_0\TCmin(A/I|\S_A)$ (or is expected to). By \cite[Theorem 3.3]{HMFinite}, we have
  \[ W_m(A/I) = \pi_0\TR^m(A/I). \]
  But the relative and absolute $\TR^m$ have the same $\pi_0$, so we get
  \[ \pi_0\TR^m(A/I|\S_A) = \pi_0\TR^m(A/I) = W_m(A/I). \]
  The map $c$ then describes the map
  \[ W_m(A/I) = \pi_0\TR^m(A/I|\S_A) \to \pi_0\THH(A/I|\S_A)^{hC_{p^m}} = A/I_m. \]

  The prismatic ghost polynomials (for $n=0$) come from the Nikolaus-Scholze ``staircase'' formula for $\TR^m$ \cite[Corollary II.4.7]{NikolausScholze}. For example, when $m=1$ this staircase formula gives a pullback
  \[\xymatrix{
    W_1(A/I) \pullback \ar[r] \ar[d] & A/I \ar[d]^-\phi\\
    A/I_1 \ar[r] & A/\phi(I).
  }\]
  This diagram says that $W_1(A/I)$ is the image of $(v_0^{(0)}, v_1^{(0)})$.
\end{remark}

\subsection{\tops{$\delta$}{delta}-operations}
\label{sub:delta}

Looking at the map $c$ for long enough, one realizes that there is also a map
\[ A/I_{m+n}\to W_m(A/\phi^m(I_n)). \]
We explain how to see this using the $\delta$-structure on $A$.

\begin{lemma}
  The map $\delta_\bullet\colon A\to W(A)$ encoding the $\delta$-structure on $A$ descends to a map
  \[
    A/I_{m+n} \xra{\delta_\bullet} W_m(A/\phi^m(I_n)).
  \]
\end{lemma}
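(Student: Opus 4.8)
The plan is to verify that the Witt-coordinate components of the $\delta$-structure map $\delta_\bullet\colon A\to W(A)$, namely $a_i=\theta_i$ with $\theta_0=\id$, factor through the required quotients once we truncate at level $m$. Concretely, $\delta_\bullet$ sends $f$ to the Witt vector whose ghost components are $f,\phi(f),\phi^2(f),\dots$; equivalently, in Witt coordinates, to $(f,\delta(f),\delta_2(f),\dots)$ where by the Proposition in the excerpt $\phi^i(f)=f^{p^i}+\sum_{j\le i}p^j\theta_j(f)^{\phi^{i-j}}$. Truncating to $W_m$ keeps only the first $m+1$ ghost components $f,\phi(f),\dots,\phi^m(f)$. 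So I must show: (i) these $m+1$ values depend only on $f\bmod I_{m+n}$, after reduction to $A/\phi^m(I_n)$ in the appropriate slot; and (ii) this is compatible with the ghost-to-Witt passage so that it indeed lands in $W_m(A/\phi^m(I_n))$ and not merely in the ghost ring.

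First I would reduce to the transversal case, exactly as in Construction~\ref{cons:main}: for a transversal prism $(B,J)\to(A,I)$ the map $A\tnsr_B B/J_{m+n}\to A/I_{m+n}$ is an isomorphism (shown there), and $\phi^m(I_n)$ is likewise the pushforward of $\phi^m(J_n)$, so it is enough to construct $\delta_\bullet$ compatibly for transversal prisms and then Kan extend / animate. For a transversal prism I would use transversal coordinates on $A/I_{m+n}$ and ghost coordinates on $W_m(A/\phi^m(I_n))$. The target $W_m(A/\phi^m(I_n))$ sits inside $\prod_{i=0}^{m}\bigl(A/\phi^m(I_n)\bigr)/\phi^i(\text{--})$-type pieces via its own transversal/ghost description; the key computation is that the $i$-th ghost component of $\delta_\bullet(f)$ is $\phi^i(f)$, and since $\phi^i(I_n)\subseteq I_{m+n}$ for $0\le i\le m$ (because $I_n\phi(I_n)\cdots$ telescopes: $\phi^i(I_n)=\phi^i(I)\cdots\phi^{i+n}(I)$, all factors among $I,\dots,\phi^{m+n}(I)$), the element $\phi^i(f)\bmod\phi^{?}(\cdots)$ is well-defined from $f\bmod I_{m+n}$ after the final reduction mod $\phi^m(I_n)$. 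I would phrase this as: $\delta_\bullet$ on transversal coordinates is the ``mirror image'' of the formula for $c$ in the Lemma before the Corollary, i.e.\ $(t_0,\dots,t_{m+n})\mapsto$ the ghost vector $(w_0,\dots,w_m)$ with $w_i=\phi^i(\text{lift})\bmod\phi^m(I_n)$, and check this is independent of the lift using Lemma~\ref{lem:trans-coords}.

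The main obstacle, and the step I would spend the most care on, is the integrality/well-definedness over $A/\phi^m(I_n)$ rather than over a smaller quotient: a priori $\phi^i(f)$ is only well-defined modulo $\phi^i(I_{m+n})$, and one must check these congruences are all implied once one reduces the $i$-th slot modulo $\phi^m(I_n)$ — i.e.\ that $\phi^i(I_{m+n})\supseteq$ (or interacts correctly with) the relevant ideal, using $\phi^m(I_n)\subseteq\phi^i(I_{m+n-?})$ for the appropriate indices. This is exactly the kind of bookkeeping with the ideals $I_k=I\phi(I)\cdots\phi^k(I)$ that Lemma~\ref{lem:trans-coords} is designed to handle, so I expect it to go through by an induction on $m$ paralleling the proof of Lemma~\ref{lem:c-witt}, but it is the place where a sign or index error would hide. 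Once well-definedness is settled, that the resulting map is a ring homomorphism is automatic since $\delta_\bullet\colon A\to W(A)$ is one and all our maps are quotients of it; and functoriality in $A$ and compatibility with the animation are formal.
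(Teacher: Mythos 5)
Your overall strategy matches the paper's: reduce to the transversal case, pass to ghost coordinates, and observe that everything hinges on an inclusion of ideals that makes $\phi^k(f)\bmod\phi^m(I_n)$ depend only on $f\bmod I_{m+n}$. (Your secondary worry --- that the output should land in $W_m$ and not merely in the ghost ring --- is a non-issue: $\delta_\bullet$ already lands in $W(A)$, and one then just applies $W_m$ of the quotient map $A\to A/\phi^m(I_n)$; ghost coordinates are only used to \emph{verify} the descent, which is legitimate because $A/\phi^m(I_n)$ is $p$-torsion-free in the transversal case.)

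The gap is that you never actually establish the inclusion that makes the argument work, and the inclusion you do write down is reversed and not the one needed. What is needed is
\[
  \phi^k(I_{m+n}) \subseteq \phi^m(I_n) \quad\text{for all } 0\le k\le m,
\]
so that $f\mapsto\phi^k(f)\bmod\phi^m(I_n)$ factors through $A/I_{m+n}$. You instead assert $\phi^i(I_n)\subseteq I_{m+n}$, justified by noting that the factors $\phi^i(I),\dotsc,\phi^{i+n}(I)$ of $\phi^i(I_n)$ all occur among the factors of $I_{m+n}$. But that observation gives the \emph{opposite} containment: a product over \emph{fewer} factors \emph{contains} the product over more, so $I_{m+n}\subseteq\phi^i(I_n)$, not $\phi^i(I_n)\subseteq I_{m+n}$. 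Your observation, correctly oriented and with the index shifted, does lead to a proof: since $0\le m-k\le m$ and $(m-k)+n\le m+n$, all factors of $\phi^{m-k}(I_n)$ occur among those of $I_{m+n}$, so $I_{m+n}\subseteq\phi^{m-k}(I_n)$, and applying $\phi^k$ gives $\phi^k(I_{m+n})\subseteq\phi^m(I_n)$. The paper proves the same inclusion by explicitly factoring $I_{m+n}=I_{m-k-1}\cdot\phi^{m-k}(I_n)\cdot\phi^{m+n-k+1}(I_{k-1})$ and applying $\phi^k$. You flagged this step as ``the place where a sign or index error would hide,'' and indeed that is exactly where it hid; moreover the fallback you suggest (an induction on $m$ paralleling Lemma~\ref{lem:c-witt}) is unnecessary --- the inclusion is a one-line direct computation once the direction is sorted out.
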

\begin{proof}
  The map $\delta_\bullet$ is given in Witt and ghost coordinates by
  \begin{align*}
    \delta_\bullet(x)
      &= (x,\delta(x),\delta_2(x),\dotsc)_{\text{Witt}}\\
      &= (x,\phi(x),\phi^2(x),\dotsc)_{\mathghost}
  \end{align*}
  As before, it suffices to prove the lemma when $A$ is transversal, allowing us to take advantage of ghost coordinates. For any $0\le k\le m$, we have
  \begin{align*}
    \phi^k(I_{m+n})
      &= \phi^k(I_{m-k-1} \phi^{m-k}(I_n) \phi^{m+n-k+1}(I_{k-1}))\\
      &= \phi^k(I_{m-k-1}) \phi^m(I_n) \phi^{m+n+1}(I_{k-1})\\
      &\subset \phi^m(I_n). \qedhere
  \end{align*}
\end{proof}

\begin{remark}
  In Witt coordinates, the existence of $\delta_\bullet$ tells us, for instance, that
  \[ \delta_k(I_n) \subset \phi^k(I_{n-k}). \]
  We originally constructed $\delta_\bullet$ by proving these identities; but one still needs to reduce to the case of transversal prisms to do so. The commutation with $V$ (Remark \ref{rem:delta-tamb}) can also interpreted this way. However, we do not know of any elementary way to interpret the commutation with $N$.
\end{remark}

We now study the composite $\delta_\bullet c$.

\begin{example}
  For $m=1$, the composite is
  \begin{align*}
    (a_0, a_1)
      &\mapsto Na_0 + Va_1\\
      &= \sum\left(\begin{matrix}
        a_0^\phi & -V\theta_1(a_0)\\
        & Va_1
      \end{matrix}\right)\\
      &= \sum\left(\begin{matrix}
        a_0^p & -V\theta_1(a_0)\\
        p\theta_1(a_0) & Va_1
      \end{matrix}\right)\\
      & \mapsto
      \begin{cases}
        a_0^\phi + V(a_1 - \theta_1(a_0)) &\in A/\phi(I_n) \text{ via } \id\\
        \phi(a_0^p + pa_1) &\in A/\phi(I_n) \text{ via } \phi
      \end{cases}
    \end{align*}
    That is, the composite is given in ghost coordinates by \[ (\phi(w_0),\phi(w_1)) + (V(a_1 - \theta_1(a_0), 0)) = W_1(\phi) + \varepsilon_1 \]
    with $\varepsilon_1=(V(a_1-\theta_1(a_0)), 0)$.
\end{example}

\begin{example}
  For $m=2$, the composite is
  \begin{align*}
    (a_0, a_1, a_2)
      &\mapsto N^2a_0 + VNa_1 + V^2 a_2\\
      &= \sum\left(\begin{matrix}
        a_0^{\phi^2} & -V\theta_1(a_0)^\phi & -V^2 \theta_2(a_0)\\
        & Va_1^\phi & -V^2\theta_1(a_1)\\
        && V^2 a_2
      \end{matrix}\right)\\
    &\mapsto W_2(\phi^2) + \varepsilon_2
  \end{align*}
  where
  \[
    \varepsilon_2 =
    \begin{bmatrix*}[r]
      V\phi(a_1 - \theta_1(a_0)) + V^2(a_2 - \theta_1(a_1) - \theta_2(a_0))\\
      \phi V^2(a_2 - \theta_1(a_1) - \theta_2(a_0))\\
      0
    \end{bmatrix*}
  \]
\end{example}

\begin{theorem}
  \label{thm:epsilon}
  The composite map
  \[ \delta_\bullet c\colon W_m(A/I_n) \to W_m(A/\phi^m(I_n)) \]
  is given in ghost coordinates by $W_m(\phi^m) + \varepsilon_m$, where
  \begin{align*}
    \varepsilon_m(a_0,\dotsc,a_m)_{0\le i\le m}
    &=
    \sum_{k=i+1}^m p^i V^{m+n}_{m+n-(k-i)}\left(a_k-\sum_{j=1}^k \theta_j(a_{k-j})\right)^{\phi^{m-k+i}}
  \end{align*}
\end{theorem}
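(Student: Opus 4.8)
The plan is to compute $\delta_\bullet c$ in ghost coordinates by combining the explicit ghost-coordinate formula for $c$ (the Lemma giving $c(w_0,\dots,w_m) = (w_m,\dots,w_m,\phi(w_{m-1}),\dots,\phi^m(w_0))$) with the explicit Witt-coordinate formula for $c$ from Lemma \ref{lem:c-witt}. Reducing to the transversal case (permissible, since everything is natural and transversal prisms generate), I would first record that $\delta_\bullet$ in ghost coordinates is $x\mapsto(x,\phi(x),\phi^2(x),\dots)$, so for any $y\in A/I_{m+n}$ the $i$-th ghost component of $\delta_\bullet(y)$ is just $\phi^i$ applied to $y$, read in the appropriate quotient $A/\phi^i(I_{m+n})\to A/\phi^m(I_n)$ (which makes sense by the Lemma on $\delta_\bullet$ descending). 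So I need to understand $\phi^i(c(a_0,\dots,a_m))$ for each $0\le i\le m$.

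The key computation is then: apply $\phi^i$ to the formula of Lemma \ref{lem:c-witt}, namely $c(a_0,\dots,a_m) = \sum_{k=0}^m V^{m+n}_{m+n-k}\bigl(a_k - \sum_{j=1}^k\theta_j(a_{k-j})\bigr)^{\phi^{m-k}}$, and read the result in $A/\phi^m(I_n)$. I expect the terms to split into two groups according to whether $\phi^i$ pushes the transfer $V^{m+n}_{m+n-k}$ "off the edge." Concretely, in transversal coordinates a transfer $V^{m+n}_{m+n-k}(z)$ in position $j$ of $A/I_{m+n}$ is $p^{k}$-divisible and, after applying $\phi^i$ and landing in $A/\phi^m(I_n)$, contributes $p^i \cdot (\text{something})$ when $k>i$ and contributes the "principal"/$\phi$-twisted main term when $k\le i$. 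Summing the $k\le i$ contributions should reassemble exactly the $i$-th ghost component of $W_m(\phi^m)$ applied to $(a_0,\dots,a_m)$ — this is where one uses that the ghost component of $c$ is $\phi^m(w_0)$-type data, i.e. $W_m(\phi^m)(w_0,\dots,w_m)_i = \phi^m(w_i)$, matched against the ghost formula $w_{m-\langle\langle j-n\rangle\rangle}$ specialized appropriately — and the $k>i$ contributions should reassemble precisely $\varepsilon_m(a_0,\dots,a_m)_i = \sum_{k=i+1}^m p^i V^{m+n}_{m+n-(k-i)}\bigl(a_k-\sum_{j=1}^k\theta_j(a_{k-j})\bigr)^{\phi^{m-k+i}}$. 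The shift $m+n-k \mapsto m+n-(k-i)$ in the index of $V$ and the shift $\phi^{m-k}\mapsto\phi^{m-k+i}$ are exactly what applying $\phi^i$ does to a transfer term, so the bookkeeping should match.

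The main obstacle I anticipate is the careful reduction modulo $\phi^m(I_n)$ of the twisted transfer terms: one must verify that $\phi^i$ applied to $V^{m+n}_{m+n-k}(z)$, when $k\le i$, genuinely becomes the $\phi$-power main term (no leftover transfer) in $A/\phi^m(I_n)$, and when $k>i$ becomes $p^i$ times a residual transfer of the correct level — this is precisely the kind of congruence verified in the proof of Lemma \ref{lem:n-lift-p}, using $p\in I_{m+n-k}+\phi^{m+n-k+1}(I)A$ and the extended prism condition, plus the containment $\phi^k(I_{m+n})\subset\phi^m(I_n)$ established in the Lemma on $\delta_\bullet$. Once that congruence is pinned down term by term, the rest is matching indices, and the $m=1,2$ examples already displayed serve as a consistency check on the final indexing of $\varepsilon_m$. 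I would also double-check the edge case $i=m$, where $\varepsilon_m(\cdots)_m$ is an empty sum (all $k>m$ are vacuous), consistent with the displayed $m=1,2$ cases where the last coordinate of $\varepsilon$ is $0$.
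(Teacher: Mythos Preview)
Your proposal is correct and follows essentially the same approach as the paper: apply $\phi^i$ to the Witt-coordinate formula for $c$ from Lemma~\ref{lem:c-witt}, use the congruence $\phi^i V^{m+n}_{m+n-k}(1) \equiv p^{\min(i,k)} V^{m+n}_{m+n-\mz{k-i}}(1) \bmod \phi^m(I_n)$, and split into $k\le i$ (main term) and $k>i$ ($\varepsilon_m$). The one step you slightly understate is the reassembly of the $k\le i$ contributions into $\phi^m(w_i)$: each such term carries a factor $p^k$ (not a bare $\phi$-power), and collapsing $\sum_{k=0}^i p^k\bigl(a_k - \sum_{j=1}^k\theta_j(a_{k-j})\bigr)^{\phi^{i-k}}$ to $w_i$ requires reindexing by $\ell=k-j$ and invoking the identity $f^{\phi^{i-k}} - \sum_{j=1}^{i-k} p^j\theta_j(f)^{\phi^{i-k-j}} = f^{p^{i-k}}$, which is exactly what the paper does.
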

\begin{proof}
  Recall again that $\min(i,k)=k-\mz{k-i}$. By Lemma \ref{lem:c-witt}, we have
  \begin{align*}
    c(a_0,\dotsc,a_m)
      &= \sum_{k=0}^m
        V^{m+n}_{m+n-k}\left(
          a_k - \sum_{j=1}^k \theta_j(a_{k-j})
        \right)^{\phi^{m-k}}
  \end{align*}
  Since
  \[ \phi^i V^{m+n}_{m+n-k}(1) \equiv p^{\min(i,k)} V^{m+n}_{m+n-\mz{k-i}}(1) \mod \phi^m(I_n), \]
  applying $\phi^i$ to this gives
  \begin{align*}
    \phi^i c(a_0,\dotsc,a_m)
      &= \sum_{k=0}^m p^{\min(i,k)}
        V^{m+n}_{m+n-\mz{k-i}}\left(
          a_k - \sum_{j=1}^k \theta_j(a_{k-j})
        \right)^{\phi^{m-k+i}}\\
      &=
        \sum_{k=0}^i p^k
          \left(
            a_k - \sum_{j=1}^k \theta_j(a_{k-j})
          \right)^{\phi^{m-k+i}} +
        \sum_{k=i+1}^m p^i
          V^{m+n}_{m+n-(k-i)}\left(
            a_k - \sum_{j=1}^k \theta_j(a_{k-j})
          \right)^{\phi^{m-k+i}}\\
  \end{align*}
  The second summand is what we want, so setting $\ell=k-j$, we rewrite the first summand as
  \begin{align*}
    &\phantom{{}=}
    \sum_{k=0}^i p^k
      \left(
        a_k - \sum_{j=1}^k \theta_j(a_{k-j})
      \right)^{\phi^{m-k+i}}\\
    &=
    \phi^m\left(\sum_{k=0}^i 
      p^k 
        a_k^{\phi^{i-k}} - \sum_{\ell=0}^{i-1} \sum_{j=1}^{i-\ell} p^{\ell+j}\theta_j(a_\ell)^{\phi^{i-(\ell+j)}}\right)\\
    &=
    \gray{\phi^m\left(\sum_{k=0}^i 
      p^k 
        a_k^{\phi^{i-k}} - \sum_{\ell=0}^{\textcolor{black}i} \sum_{j=1}^{i-\ell} p^{\ell+j}\theta_j(a_\ell)^{\phi^{i-(\ell+j)}}\right)}\\
    &=
    \phi^m\sum_{k=0}^i 
      p^k \left(
        a_k^{\phi^{i-k}} - \sum_{j=1}^{i-k} p^j\theta_j(a_k)^{\phi^{(i-k)-j}}\right)\\
    &=
    \phi^m\sum_{k=0}^i 
      p^k a_k^{p^{i-k}}\\
    &= \phi^m(w_i)\qedhere
  \end{align*}
\end{proof}

\begin{lemma}
  \label{lem:delta-theta}
  In a $\delta$-ring, we have the identity
  \[ \delta_n(f) = \sum_{k=0}^{n-1} \theta_{n-k}(\delta_k(f)). \]
\end{lemma}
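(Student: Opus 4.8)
The plan is to prove the identity $\delta_n(f) = \sum_{k=0}^{n-1}\theta_{n-k}(\delta_k(f))$ by unwinding both sides against the Witt-vector presentation of the $\delta$-structure and comparing coefficients of a universal polynomial identity. Recall that the map $\delta_\bullet\colon A\to W(A)$ sends $f$ to $(f,\delta_1(f),\delta_2(f),\dots)$ in Witt coordinates and to $(f,\phi(f),\phi^2(f),\dots)$ in ghost coordinates, while the $\theta_n$ are defined by $\phi(f^{p^{n-1}}) = f^{p^n} + p^n\theta_n(f)$, equivalently (by the Proposition above) $\phi^n(f) = f^{p^n} + \sum_{i=1}^n p^i\theta_i(f)^{\phi^{n-i}}$. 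Since everything in sight is a polynomial (with $\Z$-coefficients) in $f$ and the $\delta_k(f)$, it suffices to verify the identity in the universal $\delta$-ring $\Z\{x\} = \Z[x_0,x_1,x_2,\dots]$, or equivalently after applying the ghost map, which is injective there since $\Z\{x\}$ is $p$-torsionfree.

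First I would translate the right-hand side into ghost coordinates via $\delta_\bullet$. The composite $\delta_\bullet\colon A\to W(A)$, followed by the $n$-th ghost component, is just $\phi^n$. On the other hand, the claim is equivalent to saying that a certain element built from the $\theta$'s computes $\delta_n$. The cleanest route is to apply $\delta_\bullet$ to $f$ and then apply the Witt-vector Frobenius: $\delta_\bullet$ intertwines $\phi$ on $A$ with the Witt Frobenius $F$ on $W(A)$, because $\delta_\bullet$ is a map of $\delta$-rings (with $W(A)$ carrying its canonical $\delta$-structure, for which $F$ is the Frobenius lift). Concretely, in ghost coordinates $F$ is the shift $(w_0,w_1,w_2,\dots)\mapsto(w_1,w_2,\dots)$, so $F\delta_\bullet(f)$ has ghost components $(\phi(f),\phi^2(f),\dots)$; but $F\delta_\bullet(f) = \delta_\bullet(\phi(f))$ has Witt components $(\phi(f),\delta_1(\phi(f)),\delta_2(\phi(f)),\dots)$. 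Matching Witt coordinate $n$ of $\delta_\bullet(\phi(f))$ against a recursion for $\delta_n$ is exactly the combinatorial content; the identity $\phi(f^{p^{n-1}}) = f^{p^n}+p^n\theta_n(f)$ is precisely what lets one express the $n$-th ghost component of $\delta_\bullet(f)$, namely $\phi^n(f)$, in terms of the $\theta_i$, and re-summing produces the telescoping $\sum_{k}\theta_{n-k}(\delta_k(f))$.

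The concrete computation I would carry out: write $w_i = w_i(\delta_\bullet f) = \phi^i(f)$, so that $\delta_n(f)$ is recovered from $w_0,\dots,w_n$ by the usual Witt recursion $w_n = \sum_{k=0}^n p^k \delta_k(f)^{p^{n-k}}$. Using $\phi^n(f) = f^{p^n} + \sum_{i=1}^n p^i\theta_i(f)^{\phi^{n-i}}$ on the left side and the analogous expansion of each $\phi^{n-i}$ applied to $\theta_i(f)$ — i.e. $\theta_i(f)^{\phi^{n-i}} = \theta_i(f)^{p^{n-i}} + \sum_{j}p^j(\cdots)$ — one reorganizes the double sum by the substitution seen in the proof of Theorem \ref{thm:epsilon} (setting $\ell = k-j$), and the claim drops out after recognizing $\sum_{k}p^k\theta_{n-k}(\delta_k(f))^{p^{\cdots}}$ as the ghost expansion of the proposed Witt vector $(\,\ast, \delta_1, \dots\,)$ whose $n$-th coordinate is $\sum_k \theta_{n-k}(\delta_k(f))$. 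In fact this is the same manipulation already performed (in gray) at the end of the proof of Theorem \ref{thm:epsilon}, so I would simply cite that computation with the roles of the variables renamed.

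The main obstacle is bookkeeping: keeping straight the three indexings (ghost vs.\ Witt vs.\ $\theta$-index) and ensuring the telescoping sum collapses correctly, including the boundary terms $k=0$ (where $\theta_n(\delta_0(f)) = \theta_n(f)$) and $k=n$ (which would be $\theta_0$, hence absent — explaining why the sum runs only to $n-1$). I do not expect any genuine difficulty beyond this, since the entire identity is forced by the definition of $\theta_n$ together with the fact that $\delta_\bullet$ is a map of $\delta$-rings; the only care needed is to justify reducing to the $p$-torsionfree universal case so that the ghost map is injective and a ghost-coordinate verification suffices.
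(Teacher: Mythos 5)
Your high-level framing is right and matches the paper's: reduce to the universal ($p$-torsionfree) $\delta$-ring so the ghost map is injective, then manipulate ghost components using the defining identity of $\theta$. However, the ``concrete computation'' you propose goes down a different and, I believe, unworkable route, and you never actually arrive at the claimed identity.

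Specifically, you propose starting from $\phi^n(f) = f^{p^n} + \sum_{i=1}^n p^i\theta_i(f)^{\phi^{n-i}}$ and then ``analogously'' expanding each $\theta_i(f)^{\phi^{n-i}}$. But expanding $\phi^{n-i}(\theta_i(f))$ — whether by the same Proposition or by the ghost formula — produces terms of the form $\theta_j(\theta_i(f))$ or $\delta_j(\theta_i(f))$, not terms of the form $\theta_{n-k}(\delta_k(f))$, which is what the lemma is about. There is no obvious reindexing or telescoping that turns nested $\theta$'s into $\theta$'s of $\delta$'s, so the ``reorganizing the double sum'' step you invoke (and the appeal to the gray computation in the proof of Theorem~\ref{thm:epsilon}, which is doing something different — merely extending an index range by a vanishing term) does not close the argument. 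As stated, your computation would not produce the desired conclusion.

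The paper's proof is both shorter and hits terms of the right shape on the first pass. It does \emph{not} use the Proposition's expansion of $\phi^n(f)$ in terms of $\theta_i(f)$. Instead it writes the ghost formula for $\phi^{n-1}(f)$, namely $\phi^{n-1}(f) = \sum_{k=0}^{n-1} p^k\,\delta_k(f)^{p^{(n-1)-k}}$, applies $\phi$ to both sides, and then uses the defining identity $\phi(g^{p^{m-1}}) = g^{p^m} + p^m\theta_m(g)$ termwise with $g = \delta_k(f)$ and $m = n-k$. This immediately yields
\[
\phi^n(f) \;=\; \sum_{k=0}^{n-1} p^k\,\delta_k(f)^{p^{n-k}} \;+\; p^n\sum_{k=0}^{n-1}\theta_{n-k}(\delta_k(f)),
\]
and comparing with $\phi^n(f) = \sum_{k=0}^{n} p^k\,\delta_k(f)^{p^{n-k}}$ isolates $p^n\delta_n(f) = p^n\sum_{k=0}^{n-1}\theta_{n-k}(\delta_k(f))$, after which you cancel $p^n$ in the universal case. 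The key move you are missing is to apply $\phi$ to the ghost formula for $\phi^{n-1}(f)$ (one index down), rather than trying to expand $\phi^n(f)$ directly.
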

\begin{proof}
  We have
  \begin{align*}
    \phi^{n-1}(f)
    &= \sum_{k=0}^{n-1} p^k \delta_k^{p^{(n-1) -k}}(f)\\
    \phi^n(f)
      &= \sum_{k=0}^{n-1} p^k \phi\left(\delta_k(f)^{p^{(n-1)-k}}\right)\\
      &= \sum_{k=0}^{n-1} p^k \left(\delta_k(f)^{p^{n-k}} + p^{n-k}\theta_{n-k}(\delta_k(f))\right)\\
      &= \sum_{k=0}^{n-1} p^k \delta_k(f)^{p^{n-k}} + p^n\sum_{k=0}^{n-1} \theta_{n-k}(\delta_k(f)) \qedhere
  \end{align*}
\end{proof}

\begin{theorem}
  \label{thm:epsilon-vanish}
  Let $\varepsilon_m = \delta_\bullet c - W_m(\phi^m)$. Then $\varepsilon_m$ vanishes:
  \begin{enumerate}
    \item \label{enum:e-n0} when $n=0$;
    \item \label{enum:e-last} in the last coordinate;
    \item \label{enum:e-delta} on the image of $A\xra{\delta_\bullet} W(A) \to W_m(A/I_n)$.
  \end{enumerate}
\end{theorem}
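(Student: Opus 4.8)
The plan is to deduce all three vanishing statements directly from the closed form for $\varepsilon_m$ in Theorem~\ref{thm:epsilon}. For \eqref{enum:e-n0} and \eqref{enum:e-last} I would first reduce, as in Construction~\ref{cons:main} and the preceding lemmas, to the case of a transversal prism $(A,I)$ by Kan extension; then $A/\phi^m(I_n)$ is $p$-torsionfree --- it is $A$ modulo an element congruent mod $p$ to a power of a generator of $I$, which transversality makes a non-zerodivisor on $A/p$ --- so its ghost map $W_m(A/\phi^m(I_n))\to(A/\phi^m(I_n))^{m+1}$ is injective and the ghost-coordinate formula of Theorem~\ref{thm:epsilon} faithfully detects whether $\varepsilon_m$ vanishes.

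Granting this, \eqref{enum:e-last} is immediate: the $m$-th ghost coordinate of $\varepsilon_m$ is the empty sum $\sum_{k=m+1}^m(\cdots)$. For \eqref{enum:e-n0}, when $n=0$ every summand of the $i$-th ghost coordinate has the shape $p^i\,V^m_{m-(k-i)}(\cdots)$ with $i+1\le k\le m$; by Lemma~\ref{lem:n-lift-p} the operation $V^m_{m-(k-i)}$ is multiplication by $\pi_{m-(k-i)+1}\dotsm\pi_m$, and since $k-i\ge1$ this product is nonempty and in particular divisible by $\pi_m\in\phi^m(I)A=\phi^m(I_0)$. Hence every ghost coordinate of $\varepsilon_m$ maps to $0$ in $A/\phi^m(I_0)$, and by faithfulness $\varepsilon_m=0$.

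For \eqref{enum:e-delta} no reduction is needed. An element in the image of $A\xra{\delta_\bullet}W(A)\to W_m(A/I_n)$ has Witt coordinates $a_k=\delta_k(x)\bmod I_n$ for some $x\in A$. Reindexing Lemma~\ref{lem:delta-theta} by $j=k-i$ yields $\delta_k(x)=\sum_{j=1}^k\theta_j(\delta_{k-j}(x))$ in $A$, so the expression $a_k-\sum_{j=1}^k\theta_j(a_{k-j})$ --- which is a factor of every summand in both Lemma~\ref{lem:c-witt} and the formula for $\varepsilon_m$ --- vanishes identically in $A$. Each summand of $\varepsilon_m$ is therefore $p^i$ times a transfer of $0$, hence $0$, uniformly in $n$. (Equivalently and more conceptually: Lemmas~\ref{lem:c-witt} and~\ref{lem:delta-theta} show that $c$ restricts to $\phi^m$ on the image of $\delta_\bullet$, i.e.\ $c(\delta_\bullet(x)\bmod I_n)=\phi^m(x)\bmod I_{m+n}$, and then naturality of $\delta_\bullet\colon A\to W(A)$ for the ring homomorphism $\phi^m\colon A\to A$ identifies $\delta_\bullet c$ with $W_m(\phi^m)$ on this image.) I do not anticipate a real obstacle: the content sits entirely in Theorem~\ref{thm:epsilon} and Lemma~\ref{lem:delta-theta}, and the only mild point of care is the transversal reduction and the $p$-torsionfreeness of $A/\phi^m(I_n)$ used in \eqref{enum:e-n0} to pass from vanishing of ghost coordinates to vanishing of the Witt vector.
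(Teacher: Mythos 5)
Your proof is correct and follows the same route as the paper: each vanishing statement is read off from the closed ghost-coordinate formula of Theorem~\ref{thm:epsilon}, with Lemma~\ref{lem:delta-theta} supplying part~(\ref{enum:e-delta}). You make explicit two points the paper's terse proof leaves implicit --- the reduction to the transversal case and the $p$-torsionfreeness of $A/\phi^m(I_n)$ (hence injectivity of the ghost map, needed to pass from vanishing ghost coordinates to vanishing of the Witt vector) --- which is sound and welcome extra care, and your alternative ``naturality of $\delta_\bullet$'' argument for~(\ref{enum:e-delta}) is a nice conceptual rephrasing of the same computation.
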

\begin{proof}
  These all follow from Theorem \ref{thm:epsilon}. For (\ref{enum:e-n0}), note that each $V^{m+n}_{m+n-(k-i)}(1)=V^m_{m-(k-i)}(1)$ is in $\phi^m(I)$. For (\ref{enum:e-last}), the sum $\sum_{k=m+1}^m$ vanishes. For (\ref{enum:e-delta}), apply Lemma \ref{lem:delta-theta}.
\end{proof}
\begin{corollary}
  The composite
  \[ A/I_{m+n} \morph^c W_m(A/\phi^m(I_n)) \morph^{\delta_\bullet} A/\phi^m(I_{m+n}) \]
  is equal to $\phi^m$.
\end{corollary}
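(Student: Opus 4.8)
The plan is to reduce to the transversal case and then substitute into the explicit formulas of \sec\ref{sub:dRW} and \sec\ref{sub:delta}. The key observation is that the outer term $A/\phi^m(I_{m+n})$ is again the $(m+n)$-th invariant of a prism, namely $(A,\phi^m(I))$: this is a prism (and, when $A$ is transversal, a transversal one --- note $\phi^m(d)\equiv d^{p^m}\bmod p$, so $\phi^m(d)$ is a nonzerodivisor on $A/p$; and $p=\phi^m(p)\in\phi^m\bigl(I+\phi(I)A\bigr)=\phi^m(I)+\phi^{m+1}(I)A$), whose $k$-th invariant is $A/\phi^m(I_k)$ since $\phi^m(I)\phi^{m+1}(I)\dotsm\phi^{m+k}(I)=\phi^m(I_k)$. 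In particular the second map of the composite is precisely the comparison map $c$ of \sec\ref{sub:dRW} applied to the prism $(A,\phi^m(I))$. Since $\m A$ and all of these maps are built by Kan extension from transversal prisms (Construction \ref{cons:main}, and the proofs in \sec\ref{sub:dRW} and \sec\ref{sub:delta}), it suffices to establish the identity for transversal $A$.

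For transversal $A$ I would argue in coordinates. By the proof of the descent lemma in \sec\ref{sub:delta}, $\delta_\bullet$ sends $x\in A/I_{m+n}$ to the element of $W_m(A/\phi^m(I_n))$ with ghost coordinates $(x,\phi x,\dotsc,\phi^m x)$. Plugging $w_j=\phi^j(x)$ into the ghost-to-transversal formula $c(w_0,\dotsc,w_m)=(w_m,\dotsc,w_m,\phi(w_{m-1}),\dotsc,\phi^m(w_0))$ for the prism $(A,\phi^m(I))$, every transversal coordinate becomes $\phi^m(x)$: the first $n+1$ are literally $w_m$, and the $k$-th of the remaining ones is $\phi^k(w_{m-k})=\phi^k\phi^{m-k}(x)=\phi^m(x)$. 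But the constant vector $(\phi^m x,\dotsc,\phi^m x)$ is exactly the image of $\phi^m(x)\in A/\phi^m(I_{m+n})$ under $c_\trans$, so the composite is $\phi^m$, as claimed.

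One can also avoid coordinates altogether: precompose with the surjection $A\twoheadrightarrow A/I_{m+n}$, so the composite becomes $A\xra{\delta_\bullet}W(A)\to W_m(A/\phi^m(I_n))\xra{c}A/\phi^m(I_{m+n})$; writing the Witt coordinates of $\delta_\bullet(x)$ as $a_k=\delta_k(x)$ and feeding them into Lemma \ref{lem:c-witt} for $(A,\phi^m(I))$, the telescoping identity $\delta_k(x)=\sum_{i=1}^k\theta_i(\delta_{k-i}(x))$ of Lemma \ref{lem:delta-theta} kills $a_k-\sum_{i=1}^k\theta_i(a_{k-i})$ for every $k\ge1$, leaving only the $k=0$ term $x^{\phi^m}=\phi^m(x)$ --- which is exactly the computation in the proof of Theorem \ref{thm:epsilon-vanish}\eqref{enum:e-delta}, read for the prism $(A,\phi^m(I))$. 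In either route, the only genuinely fiddly step is the opening bookkeeping that $(A,\phi^m(I))$ is again a (transversal) prism, so that the lemmas of \sec\ref{sub:dRW} and Lemma \ref{lem:c-witt} apply to it verbatim; granting that, the statement is nothing more than the telescoping of Lemma \ref{lem:delta-theta}.
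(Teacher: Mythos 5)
Your proof is correct, and it fills in what the paper leaves implicit: the corollary is stated immediately after Theorem~\ref{thm:epsilon-vanish} with no written argument, and the substance of your argument is exactly what justifies it. You also silently repair what is in fact a typo in the display: the two arrows in the statement carry swapped labels (the map $A/I_{m+n}\to W_m(A/\phi^m(I_n))$ is $\delta_\bullet$ and the map $W_m(A/\phi^m(I_n))\to A/\phi^m(I_{m+n})$ is $c$), and you correctly read it with the intended meaning.

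Both of your routes land. The essential preliminary observation, that $(A,\phi^m(I))$ is again a (transversal) prism with $\phi^m(I)_k=\phi^m(I_k)$, so that the second arrow is literally the comparison map $c$ for that prism, is exactly the bookkeeping needed, and your verification of the prism conditions is fine (for the transversality part, note $\phi^m(d)\equiv d^{p^m}\bmod p$ and $d$ a nonzerodivisor on $A/p$ implies $d^{p^m}$ is too). The ghost-coordinate computation in your first route is the cleanest: $\delta_\bullet(x)$ has ghost vector $(x,\phi x,\dotsc,\phi^m x)$, and feeding $w_j=\phi^j(x)$ into the ghost-to-transversal formula for $c$ over $(A,\phi^m(I))$ gives the constant vector $(\phi^m x,\dotsc,\phi^m x)$, which is $c_\trans(\phi^m x)$. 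The Witt-coordinate route via Lemma~\ref{lem:c-witt} and the telescoping identity of Lemma~\ref{lem:delta-theta} is the same calculation as in the proof of Theorem~\ref{thm:epsilon-vanish}(\ref{enum:e-delta}) (strictly, part (3) uses the $\varepsilon_m$ formula of Theorem~\ref{thm:epsilon} rather than Lemma~\ref{lem:c-witt} directly, but the engine is Lemma~\ref{lem:delta-theta} in both cases), so both versions of your argument sit squarely inside the paper's own framework.
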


\begin{remark}
  The maps $c$ and $\delta_\bullet$ can be iterated, giving an infinite ladder
  \[\xymatrix{
    W_m(A/I_n) \ar[r] \ar[d]_-{W_m(\phi^m)+\varepsilon_m} & A/I_{m+n} \ar[dl] \ar[d]^-{\phi^m}\\
    W_m(A/\phi^m(I_n)) \ar[r] \ar[d]_-{W_m(\phi^m)+\varepsilon_m} & A/\phi^m(I_{m+n}) \ar[d]^-{\phi^m} \ar[dl]\\
    \dots \ar[r] & \dots
  }\]
\end{remark}

\begin{question}
  What does the perfect sandwich say about $\WCart$ \cite{APC}? Note that the map $\delta_\bullet\colon A\to W(A)$ is used in the construction of the map $\Spf A\to\WCart$.
\end{question}

\begin{remark}
  We do not know how to produce the map $\delta_\bullet$ from topology; it may be related to the identification $\TR^{t\T} = \TP$ \cite[Corollary 10]{TCart}.
\end{remark}

\begin{remark}
  \label{rem:delta-tamb}
  The map $\delta_\bullet$ can be viewed as a map of Tambara functors, for example
  \[\xymatrix{
    A/I_2 \mt \ar[rr]^-{(1,\phi,\phi^2)} && W_2(A/\phi^2(I)) \mt\\
    A/I_1 \mt \ar[r]^-{(1,\phi)} & W_1(A/\phi(I)) \mt \ar[r]^-{W_1(\phi)} & W_1(A/\phi^2(I)) \mt\\
    A/I \ar[r]_-\phi & A/\phi(I) \ar[r]_-\phi & A/\phi^2(I)
  }\]
  This hints at a definition of ``$\delta$-structures on Tambara functors'' that would allow us to recover the $\delta$-structure on $A$ from Tambara-theoretic data. The exact definition is rather subtle to pin down, so we will pursue this in future work.
\end{remark}

\section{Generalized \tops{$n$}n-series and \tops{$q$}q-pd thickenings}
\label{sec:gns}

In this section we describe a variant of the construction which takes a $q$-pd thickening \cite[Definition 16.2]{Prismatic}, \cite[Definition 3.1]{GLQqCrys} as input instead of a prism. This construction sends a $q$-pd pair $(D,I)$ to the Tambara functor
\[ \m D^-(C_{p^n}) = D/I(p^n)_q. \]
In fact, more generally than $q$-analogues, we will work with \emph{generalized $n$-series} (GNS's) in the sense of Devalapurkar-Misterka \cite{GNS}.

One of our (long-term) goals in this project is to give a unified construction of integral prismatic and $q$-crystalline cohomology (rather than building them one prime at a time and then gluing). To this end, we will construct a $\T$ (i.e.\ $\Q/\Z$, since we only care about the finite subgroups) Tambara functor, rather than a $C_{p^\infty}$ one. To avoid cluttering notation with truncation sets, we will only record the (much harder) integral construction; it is trivial to extract the $p$-typical story.

In the integral case, the formulas for $FV$ and $FN$ are a bit more complicated. Let $a,b,m\in\N$ with $a,b\mid m$, and let $g=\gcd(a,b)$, $\ell=\lcm(a,b)$, so that $g=\frac{ab}{\ell}$ and there is a pullback diagram
\[\xymatrix{
  C_g \pullback \ar[r] \ar[d] & C_b \ar[d]\\
  C_a \ar[r] & C_m.
}\]
In our setting, \cite[Definition 2.11(5)]{MazurTambara} becomes
\begin{align}
  \label{eq:FV} F^m_b V^m_a x &= \frac m\ell V^b_g F^a_g x\\
  \label{eq:FN} F^m_b N^m_a x &= (N^b_g F^a_g x)^{m/\ell}
\end{align}

We review GNS's and construct the associated Green functors in \sec\ref{sub:gns-green}. In \sec\ref{sub:gns-tamb}, we extend this to a Tambara structure in the case of so-called transversal GNS's. We relate our norms $N^{mn}_n(f)$ to existing interpretations of the symbol $f^{(m)_{q^n}}$ in \sec\ref{sub:norm-lifts}.

\subsection{Green structure}
\label{sub:gns-green}

\begin{definition}[{\cite[Definition 2.1.4]{GNS}}]
  Let $D$ be a ring. A \emph{generalized $n$-series} (GNS) over $D$ is a function $s\colon\N\to D$ such that
  \begin{enumerate}
    \item $s(0)=0$;
    \item $s(n)$ is not a zero-divisor for any $n>0$;
    \item $s(n-k) \mid s(n)-s(k)$ for all $n>k>0$.
  \end{enumerate}
\end{definition}


\begin{definition}
  A GNS $s$ is \emph{reduced} if $s(1)=1$. If $s$ is any GNS, its \emph{reduction} $\~s$ is the reduced GNS given by
  \[ \~s(n) = \frac{s(n)}{s(1)}. \]
\end{definition}

\begin{definition}[{\cite[Remark 2.4.6]{GNS}}]
  If $s$ is a GNS and $m\in\N$, we define a rescaled GNS $s_m$ by $s_m(n) = s(mn)$.
\end{definition}

The key examples of interest come from formal group laws.

\begin{proposition}[{\cite[Proposition 4.3.4]{GNS}}]
  Let $F$ be a formal group law over a ring $R$, and suppose that the $n$-series $[n]_F(t)\in R\psr t$ is not a zero-divisor for any $n>0$. Then the $n$-series $s(n)=[n]_F(t)$ is a GNS over $R\psr t$. Similarly, the function $\~s(n)=[n]_F(t)/t \mathrel{=:} \<n\>_F(t)$ is a GNS over $R\psr t$.
\end{proposition}

\begin{example}
  The additive formal group $\hat\G_a$ has $n$-series $[n]_{\hat\G_a} = n$. We refer to the GNS $s(n)=n$ as the \emph{additive GNS}.
\end{example}

\begin{example}
  The formal multiplicative group $\hat\G_m$ has $n$-series $[n]_{\hat\G_m}=(1+t)^n-1$. Setting $t=q-1$, we obtain the GNS as $s(n)=q^n-1\in\Z\psr{q-1}$. We refer to this as the \emph{multiplicative GNS}. We have $\~s(n)=(n)_q$ and $\~s_m(n)=(n)_{q^m}$.
\end{example}

\begin{example}[{\cite[Example 4.3.2]{GNS}}]
  The \emph{hyperbolic formal group law} is given by $\hat\bfH(x,y)=\frac{xy}{1+xy}$. It is so-named because $\tanh(a+b)=\hat\bfH(\tanh a,\tanh b)$. We refer to the associated GNS as the \emph{hyperbolic GNS}. Making the same substitution $t=q-1$, one can verify that
  \begin{align*}
    [n]_{\hat\bfH}(t)
      &= \frac{(1+t)^n-(1-t)^n}{(1+t)^n+(1-t)^n} &
    \<n\>_{\hat\bfH}(t)
      &= \frac1t\frac{(1+t)^n-(1-t)^n}{(1+t)^n+(1-t)^n}\\
      &= \frac{q^n-(2-q)^n}{q^n+(2-q)^n} &
      &= \frac1{q-1}\frac{q^n-(2-q)^n}{q^n+(2-q)^n}
  \end{align*}
\end{example}

We have not been able to construct full Green structures in general, and we are not sure if it is possible to do so: it may be necessary to invoke the framework of \emph{bi-incomplete Tambara functors} \cite{BiIncomplete}.

\begin{definition}
  Let $\cF$ be a subset of $\N$ stable under division. A GNS $s$ is \emph{$\cF$-Green} if
  \[ s(m) \equiv \frac m\ell \frac{s(a)s(b)}{s(g)} \bmod s(a)s(b) \]
  for all $m\in\cF$ and all $a,b\mid m$. Here $g=\gcd(a,b)$ and $\ell=\lcm(a,b)=\frac{ab}g$.

  When $\cF=\N$, we say that $s$ is \emph{$\T$-Green}. When $\cF=\bigcup_{p\text{ prime}}p^\N$, we say that $s$ is \emph{$\bfP$-Green}.
\end{definition}

\begin{definition}
  A GNS $s$ is \emph{Lucasian} if
  \[ s(a+b) \equiv s(a) + s(b) \bmod s(a)s(b). \]
  This implies that $\~s$ satisfies the hypotheses of the $s$-Lucas theorem \cite[Theorem 2.4.2]{GNS}, but is a slightly stronger statement.
\end{definition}

\begin{lemma}
  \label{lem:lucas-examples}
  The GNS $[-]_F$ associated to a formal group law $F$ is Lucasian.
\end{lemma}
\begin{proof}
  We have $[a+b]_F = F([a]_F, [b]_F)$. The FGL axioms $F(x,0)=x$ and $F(0,y)=y$ imply that $F(x,y)\equiv x+y\bmod xy$, so the claim follows.
\end{proof}

\begin{lemma}
  \label{lem:lucas-extract}
  If $s$ is Lucasian, then
  \[ \~s_b(a) \equiv a \bmod s(b). \]
\end{lemma}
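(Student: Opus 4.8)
The plan is to unwind the definitions and reduce to a single divisibility statement about $s$ itself, then prove it by induction on $a$ using the Lucasian congruence. First I would note that, by definition of rescaling and reduction, $\~s_b(a) = s_b(a)/s_b(1) = s(ab)/s(b)$, which makes sense because $s(b)$ is a non-zero-divisor (GNS axiom (2), assuming $b\ge 1$). So the claim $\~s_b(a)\equiv a \bmod s(b)$ is equivalent to
\[ s(ab) \equiv a\,s(b) \bmod s(b)^2. \]

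The one preliminary fact I need is that $s(b) \mid s(kb)$ for every $k\ge 1$. This follows from GNS axiom (3): $s(kb) - s((k-1)b)$ is divisible by $s(kb-(k-1)b)=s(b)$, so by induction on $k$ (base case $s(b)\mid s(b)$) we get $s(b)\mid s(kb)$ for all $k\ge 1$. In particular $s(b)^2 \mid s((a-1)b)\,s(b)$.

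Now I would induct on $a$. For $a=1$ the congruence $s(b)\equiv 1\cdot s(b)\bmod s(b)^2$ is trivial. For the inductive step, write $ab = (a-1)b + b$ and apply the Lucasian property to get
\[ s(ab) \equiv s((a-1)b) + s(b) \bmod s((a-1)b)\,s(b). \]
Since $s(b)^2$ divides the modulus $s((a-1)b)\,s(b)$ by the previous paragraph, this weakens to $s(ab)\equiv s((a-1)b) + s(b) \bmod s(b)^2$; combined with the inductive hypothesis $s((a-1)b)\equiv (a-1)s(b)\bmod s(b)^2$ this gives $s(ab)\equiv a\,s(b)\bmod s(b)^2$, completing the induction. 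Finally, since $s(b)\mid s(ab)$ and $s(b)$ is a non-zero-divisor, dividing the congruence $s(ab)\equiv a\,s(b)\bmod s(b)^2$ through by $s(b)$ yields $\~s_b(a) = s(ab)/s(b)\equiv a\bmod s(b)$, as desired.

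There is no serious obstacle here; the only point requiring a little care is that the Lucasian congruence is stated modulo $s(a)s(b)$ rather than modulo $s(b)^2$, which is why the divisibility $s(b)\mid s(kb)$ must be established first so that the modulus can be coarsened, and the division by the non-zero-divisor $s(b)$ at the end must be justified by knowing $s(b)\mid s(ab)$.
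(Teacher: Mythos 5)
Your proof is correct and follows essentially the same route as the paper: induction on $a$, the base case trivial, and the inductive step combining the Lucasian congruence $s(ab)\equiv s((a-1)b)+s(b)\bmod s((a-1)b)s(b)$ with the inductive hypothesis $s((a-1)b)\equiv (a-1)s(b)\bmod s(b)^2$ before dividing through by the non-zero-divisor $s(b)$. Your extra care in spelling out the divisibility $s(b)\mid s(kb)$ (so that the modulus can be coarsened to $s(b)^2$) and in justifying the final division is a reasonable elaboration of steps the paper leaves implicit.
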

\begin{proof}
  We induct on $a$, the case $a=0$ being trivial. For the induction step,
  \begin{align*}
    s((a+1)b)
      &\in s(ab) + s(b) + s(ab)s(b)D\\
      &\subset as(b) +s(b)D + s(b) + (as(b) + s(b))s(b)D\\
      &\subset (a+1)s(b) + s(b)^2D\\
    \~s_b(a+1) &\in (a+1) + s(b)D\qedhere
  \end{align*}
\end{proof}

\begin{corollary}
  \label{cor:lucas-ell}
  Suppose that $s$ is Lucasian. To verify that $s$ is $\cF$-Green, it suffices to check the case $m=\ell=\lcm(a,b)$.
\end{corollary}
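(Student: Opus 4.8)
The plan is to show that the general $\cF$-Green congruence for $(a,b\mid m)$ with $m\in\cF$ follows automatically once we know it for the pair $(a,b)$ at the level $m=\ell=\lcm(a,b)$, using the Lucasian hypothesis to control the dependence on $m$. First I would fix $a,b\mid m$, set $g=\gcd(a,b)$ and $\ell=\lcm(a,b)$, and observe that $\ell\mid m$ (since $m$ is a common multiple of $a$ and $b$). The $\ell$-level hypothesis reads
\[ s(\ell) \equiv \frac{s(a)s(b)}{s(g)} \bmod s(a)s(b), \]
i.e.\ $s(\ell)s(g) \equiv s(a)s(b) \bmod s(a)s(b)s(g)$ after clearing the (non-zero-divisor) denominator. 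The goal congruence at level $m$ is
\[ s(m) \equiv \frac m\ell \frac{s(a)s(b)}{s(g)} \bmod s(a)s(b). \]

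Next I would reduce the right-hand side to something expressible via $s(\ell)$: by the $\ell$-level hypothesis, $\frac{s(a)s(b)}{s(g)} \equiv s(\ell) \bmod s(a)s(b)$, so it suffices to prove
\[ s(m) \equiv \frac m\ell\, s(\ell) \bmod s(a)s(b). \]
Now I would invoke the Lucasian structure through Lemma \ref{lem:lucas-extract}: applying it to the rescaled GNS $s_\ell$ (so that $s_\ell(k) = s(k\ell)$) gives, with $m = (m/\ell)\cdot\ell$,
\[ \~{s}_\ell(m/\ell) \equiv \tfrac m\ell \bmod s(\ell), \qquad\text{i.e.}\qquad \frac{s(m)}{s(\ell)} \equiv \frac m\ell \bmod s(\ell), \]
hence $s(m) \equiv \frac m\ell s(\ell) \bmod s(\ell)^2$ (clearing the non-zero-divisor $s(\ell)$). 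Finally, $s(\ell)^2$ is divisible by $s(a)s(b)$: indeed $s(a)\mid s(\ell)$ and $s(b)\mid s(\ell)$ since $a,b\mid\ell$ (using property (3) of a GNS, $s(\ell-a)\mid s(\ell)-s(a)$ together with $s(a)\mid s(a)$, or more directly the standard divisibility $k\mid n \Rightarrow s(k)\mid s(n)$ which follows by iterating (3)), so $s(a)s(b)\mid s(\ell)^2$ and the mod $s(\ell)^2$ congruence implies the desired mod $s(a)s(b)$ congruence. Chaining the two congruences $s(m)\equiv \frac m\ell s(\ell)$ and $\frac m\ell s(\ell)\equiv \frac m\ell\frac{s(a)s(b)}{s(g)}$ (the latter obtained by multiplying the $\ell$-level hypothesis by $m/\ell$, which is legitimate modulo $s(a)s(b)$) completes the proof.

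The main obstacle I anticipate is the bookkeeping of \emph{which modulus} each congruence holds to: the Lucas-extraction step naturally produces congruences modulo powers of $s(\ell)$, while the Green condition is phrased modulo $s(a)s(b)$, and one must check $s(a)s(b)\mid s(\ell)^2$ and that multiplying the $\ell$-level congruence by the integer $m/\ell$ does not degrade the modulus below $s(a)s(b)$. A secondary subtlety is ensuring the divisibility $k\mid n\Rightarrow s(k)\mid s(n)$ is actually available from the GNS axioms as stated; if not directly cited, it is an easy induction from axiom (3) (write $n-k$, $n-2k$, \dots and telescope), and I would insert that one-line lemma. Everything else is formal manipulation with non-zero-divisors, which the GNS axioms license.
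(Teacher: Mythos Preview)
Your proposal is correct and follows essentially the same approach as the paper: write $m=c\ell$, use Lemma \ref{lem:lucas-extract} (with $b=\ell$, $a=c$) to get $s(m)\equiv c\,s(\ell)\bmod s(\ell)^2$, invoke $s(a)s(b)\mid s(\ell)^2$, and combine with the $\ell$-level hypothesis. The paper's version is terser and leaves the divisibility $s(a)s(b)\mid s(\ell)^2$ implicit, but the argument is the same.
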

\begin{proof}
  The assumption $a,b\mid m$ implies $m=c\ell$ for some $c$. By Lemma \ref{lem:lucas-extract}, we have
  \[ s(m) \equiv cs(\ell)\bmod s(\ell)^2D. \]
  Suppose we have
  \[ s(\ell)\equiv \frac{s(a)s(b)}{s(g)} \bmod s(a) s(b). \]
  Since $s(\ell)^2D\subset s(a)s(b)D$, this implies that
  \[ s(m) \equiv c\frac{s(a)s(b)}{s(g)} \bmod s(a)s(b).\qedhere \]
\end{proof}

\begin{corollary}
  Suppose that $s$ is Lucasian. The condition for $s$ to be $\cF$-Green is always satisfied when $a\mid b$ or $b\mid a$.
\end{corollary}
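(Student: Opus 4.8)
The plan is to reduce to the trivial case using the two preceding results. By symmetry it suffices to treat the case $a\mid b$; the case $b\mid a$ follows by swapping the roles of $a$ and $b$. Assuming $a\mid b$, first I would compute the relevant gcd and lcm: $g=\gcd(a,b)=a$ and $\ell=\lcm(a,b)=b$, so that $\tfrac{s(a)s(b)}{s(g)}=\tfrac{s(a)s(b)}{s(a)}=s(b)$ and the $\cF$-Green congruence to be verified, for $m\in\cF$ with $a,b\mid m$, simplifies to
\[
  s(m) \equiv \frac m\ell\, s(b) \bmod s(a)s(b).
\]

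Next I would invoke Corollary \ref{cor:lucas-ell}: since $s$ is assumed Lucasian, it suffices to check this congruence in the case $m=\ell=b$. But in that case the right-hand side is $\tfrac b b\, s(b)=s(b)$, so the congruence reads $s(b)\equiv s(b)\bmod s(a)s(b)$, which is trivially true. This completes the argument in the case $a\mid b$, and hence (by symmetry) in general.

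I do not anticipate any real obstacle here: the content is entirely bookkeeping about $\gcd$, $\lcm$, and cancellation of $s(g)$, combined with the reduction already packaged in Corollary \ref{cor:lucas-ell}. The only point to be careful about is making the symmetry explicit — namely that interchanging $a$ and $b$ leaves $g$, $\ell$, and the displayed congruence invariant — so that handling $a\mid b$ genuinely suffices.
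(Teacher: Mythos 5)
Your proof is correct and follows essentially the same route as the paper: both reduce to $m=\ell$ via Corollary~\ref{cor:lucas-ell} and then observe the resulting congruence is trivial. The only cosmetic difference is that you invoke symmetry of the $\cF$-Green condition in $a$ and $b$ to handle one case, whereas the paper just writes the (rewritten, divided-through by $s(b)$) congruence and checks both cases $a\mid b$ and $b\mid a$ directly.
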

\begin{proof}
  By Corollary \ref{cor:lucas-ell}, we need to check
  \[ \frac{s(\ell)}{s(b)} \equiv \frac{s(a)}{s(g)} \bmod s(a). \]
  These are equal to $1$ when $a\mid b$, and $s(a)/s(b)$ when $b\mid a$.
\end{proof}

\begin{corollary}
  A Lucasian GNS is $\bfP$-Green.
\end{corollary}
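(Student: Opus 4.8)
The plan is to deduce this immediately from the preceding corollary (the one asserting, for $s$ Lucasian, that the $\cF$-Green condition holds whenever $a\mid b$ or $b\mid a$). The only input needed is the elementary observation that $\cF=\bigcup_{p\text{ prime}}p^\N$ consists of prime powers, together with the fact that the divisor lattice of a prime power is a chain.

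Concretely, I would fix $m\in\cF$, say $m=p^k$, and $a,b\mid m$. Then $a=p^i$ and $b=p^j$ for some $0\le i,j\le k$, so after possibly swapping $a$ and $b$ we may assume $i\le j$, i.e.\ $a\mid b$. By the preceding corollary (which uses that $s$ is Lucasian), the congruence
\[ s(m)\equiv\frac m\ell\,\frac{s(a)s(b)}{s(g)}\bmod s(a)s(b) \]
holds for this triple $(m,a,b)$. Since $m\in\cF$ and $a,b\mid m$ were arbitrary, $s$ is $\bfP$-Green.

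There is essentially no obstacle: the whole content is the remark that any two divisors of a prime power are comparable under divisibility, which reduces the $\bfP$-Green condition to the case already handled. (By contrast, the genuinely hard case — $\T$-Green, where $a$ and $b$ need not be comparable — is precisely the one not addressed by this corollary, and is where the substantive work of later sections will be needed.)
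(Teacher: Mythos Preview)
Your proof is correct and is exactly the argument the paper has in mind: the corollary is stated without proof, immediately following the result that the $\cF$-Green condition holds whenever $a\mid b$ or $b\mid a$, and the intended deduction is precisely that divisors of a prime power are totally ordered under divisibility.
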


\begin{lemma}
  The multiplicative GNS is $\T$-Green.
\end{lemma}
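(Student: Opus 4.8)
The plan is to use that the multiplicative GNS $s(n) = q^n - 1$ comes from the formal group law $\hat\G_m$, so it is Lucasian by Lemma \ref{lem:lucas-examples}. By Corollary \ref{cor:lucas-ell} it then suffices to verify the $\T$-Green congruence in the single case $m = \ell = \lcm(a,b)$, where $m/\ell = 1$; that is, writing $g = \gcd(a,b)$, we must show
\[ q^{\ell} - 1 \equiv \frac{(q^a-1)(q^b-1)}{q^g-1} \pmod{(q^a-1)(q^b-1)} \]
in $\Z\psr{q-1}$.

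First I would reduce this to a one-variable polynomial statement. Put $a = ga'$, $b = gb'$ with $\gcd(a',b') = 1$, so $\ell = ga'b'$, and substitute $u = q^g$; then $q^a - 1 = u^{a'}-1$, $q^b - 1 = u^{b'}-1$, $q^\ell - 1 = u^{a'b'}-1$, and $\frac{(q^a-1)(q^b-1)}{q^g-1} = (u^{a'}-1)(1 + u + \dots + u^{b'-1})$. Using the factorization $u^{a'b'}-1 = (u^{a'}-1)(1 + u^{a'} + \dots + u^{(b'-1)a'})$, the difference of the two sides of the congruence equals $(u^{a'}-1)$ times $\sum_{k=0}^{b'-1} u^{ka'} - \sum_{j=0}^{b'-1} u^j$, so it remains to check that this last expression is divisible by $u^{b'}-1$ in $\Z[u]$.

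This divisibility is the main content, and it is where the coprimality of $a'$ and $b'$ is used: modulo $u^{b'}-1$ we may reduce exponents mod $b'$, and since $a'$ is a unit in $\Z/b'$ the map $k \mapsto ka' \bmod b'$ permutes $\{0,\dots,b'-1\}$, so $\sum_k u^{ka'} \equiv \sum_j u^j \pmod{u^{b'}-1}$. Unwinding the substitution, $q^\ell - 1 - \frac{(q^a-1)(q^b-1)}{q^g-1}$ is divisible by $(u^{a'}-1)(u^{b'}-1) = (q^a-1)(q^b-1)$ in $\Z[u] \subseteq \Z\psr{q-1}$, which is exactly the claim. I do not expect any serious obstacle beyond keeping the bookkeeping with $a'$, $b'$, $g$ straight: once the Lucasian reduction of Corollary \ref{cor:lucas-ell} strips away the coefficient $m/\ell$, the argument comes down to the elementary permutation-of-residues fact above.
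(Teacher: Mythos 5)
Your proof is correct and follows essentially the same route as the paper: both reduce via Lemma \ref{lem:lucas-examples} and Corollary \ref{cor:lucas-ell} to the case $m=\ell$ and then resolve the resulting $q$-analogue congruence by an elementary permutation-of-residues observation. The paper cancels $s(b)$ and phrases the residue fact as ``$b$ and $g$ generate the same subgroup of $\Z/a$'', whereas you cancel $s(a)$ and substitute $u=q^g$ to reduce to multiplication by $a'$ permuting $\Z/b'$; these are symmetric formulations of the same computation.
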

\begin{proof}
  By Lemma \ref{lem:lucas-examples} and Corollary \ref{cor:lucas-ell}, it suffices to show that
  \[ (\ell/b)_{q^b} \equiv (a/g)_{q^g} \bmod (q^a-1). \]
  First note that $\ell/b = a/g$. The result then follows because $b$ and $g$ generate the same subgroup of $\Z/a$.
\end{proof}

\begin{theorem}
  \label{thm:gns-green}
  Let $s$ be an $\cF$-Green GNS. The assignments
  \begin{align*}
    \m D^-(C_n) &= D/s(n)\\
    \m D(C_n) &= D/\~s(n)
  \end{align*}
  naturally extend to incomplete Green functors for the indexing system $\cF$.
\end{theorem}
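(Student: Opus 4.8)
The plan is to construct the Green functor by hand in terms of Mazur's explicit axioms for (incomplete) Green functors over cyclic groups \cite{MazurTambara} --- all Weyl actions being trivial, as forced by the surjectivity of the restriction maps below --- and to observe that the only axiom requiring any input is the double coset formula, which is precisely the $\cF$-Green condition.

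First I would record the elementary fact that, for any GNS $s$ and any $a\mid m$, one has $s(a)\mid s(m)$ in $D$: writing $m=ja$ and inducting on $j$, axiom (3) gives $s(a)=s(ja-(j-1)a)\mid s(ja)-s((j-1)a)$, while $s(0)=0$; hence, $s(a)$ being a non-zero-divisor, $s(m)/s(a)$ is a well-defined element of $D$. Now set $\m D^-(C_n)=D/s(n)$; take the restriction $F^m_a\colon D/s(m)\to D/s(a)$ to be the (surjective, unital) quotient ring map, which makes sense since $s(a)\mid s(m)$; and for $m\in\cF$ take the transfer $V^m_a\colon D/s(a)\to D/s(m)$ to be multiplication by $s(m)/s(a)$, which is well-defined on residues because $\frac{s(m)}{s(a)}\cdot s(a)=s(m)$. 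Functoriality of $F$ (composites of quotient maps) and of $V$ (because $\frac{s(m)}{s(b)}\cdot\frac{s(b)}{s(a)}=\frac{s(m)}{s(a)}$ in $D$ for $a\mid b\mid m$, by cancelling $s(a)$) is immediate, and the projection formula holds because $V^m_a(x\cdot F^m_a y)$ and $V^m_a(x)\cdot y$ are both represented in $D$ by $\frac{s(m)}{s(a)}$ times arbitrary lifts of $x$ and $y$, well-defined modulo $s(m)$. Each $D/s(n)$ is a commutative ring and each $F^m_a$ a ring map, so this is the data of a (pre-)Green functor.

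The heart of the matter is the double coset formula \eqref{eq:FV}. Fix $a,b\mid m$ with $m\in\cF$, and put $g=\gcd(a,b)$, $\ell=\lcm(a,b)=ab/g$, so $\ell\mid m$ and $m/\ell\in\N$. Unwinding the definitions, the asserted identity $F^m_bV^m_a=\frac m\ell V^b_gF^a_g$ amounts to the congruence
\[ \frac{s(m)}{s(a)}\equiv\frac m\ell\cdot\frac{s(b)}{s(g)}\pmod{s(b)}. \]
The $\cF$-Green hypothesis gives
\[ s(m)\equiv\frac m\ell\cdot\frac{s(a)s(b)}{s(g)}\pmod{s(a)s(b)}, \]
and since $s(a)$ divides $s(m)$, the right-hand side, and the modulus $s(a)s(b)$, we may cancel the non-zero-divisor $s(a)$ from this congruence to obtain exactly the displayed congruence; multiplying by any representative then yields $F^m_bV^m_a(x)=\frac m\ell V^b_gF^a_g(x)$ for all $x$. (The case $a=b$ recovers $F^m_aV^m_a=\frac m a\cdot\id$, the expected ``$\tr\res=|W|$'' relation.) Since $\cF$ is closed under division, $b\in\cF$ whenever $m\in\cF$, so $V^b_g$ is again an allowed transfer; thus the double coset formula closes up within the indexing system $\cF$. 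The remaining identities of \cite[Definition 2.11]{MazurTambara} concerning the additive and multiplicative (but not the norm) structure --- additivity of the transfers, unitality, and compatibility with the unit Green functor --- have already been checked or are routine from the explicit formulas, which likewise make $\m D^-$ functorial in $(D,s)$ and in maps of $\cF$-Green GNS's. So $\m D^-$ is an incomplete Green functor for $\cF$.

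Finally, for $\m D(C_n)=D/\~s(n)$ one checks that $\~s$ is again an $\cF$-Green GNS: axioms (1)--(3) are inherited after dividing by the non-zero-divisor $s(1)$ (using $s(1)\mid s(n)$ for all $n$), and the Green congruences for $\~s$ become those for $s$ after clearing the denominator $s(1)$, hence follow from them. Therefore $\m D$ is just the Green functor produced above, applied to the reduced GNS $\~s$. The main --- and essentially the only --- obstacle is the double coset formula, and the content of the theorem is the observation that it is literally the $\cF$-Green condition.
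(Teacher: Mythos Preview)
Your proof is correct and follows the same approach as the paper's: define $V^{mn}_n$ as multiplication by $s(mn)/s(n)$ and observe that the double coset formula \eqref{eq:FV} is exactly the $\cF$-Green condition. The paper's proof is a two-sentence sketch of precisely this; you have simply filled in the routine verifications (divisibility $s(a)\mid s(m)$, well-definedness and functoriality of $F$ and $V$, the projection formula, and the reduction of $\m D$ to the case of $\~s$) that the paper leaves implicit.
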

\begin{proof}
  We define $V^{mn}_n$ as multiplication by $\frac{s(mn)}{s(n)}$. Condition \eqref{eq:FV} is built into the definition of Green GNS.
\end{proof}

\subsection{Tambara structure}
\label{sub:gns-tamb}

To construct norm maps, we need an analogue of transversal coordinates. For notational convenience, we assume from now on that $s$ is $\T$-Green; the results can be stated more carefully in terms of bi-incomplete Tambara functors \cite{BiIncomplete}.

\begin{definition}
  A GNS $s$ on a ring $D$ is \emph{transversal} if $D/s(n)$ is $\N$-torsionfree for all $n$.
\end{definition}

\begin{example}
  The GNS's $s(n)=q^n-1$ and $\~s(n)=(n)_q$ over the ring $\Z\psr{q-1}$ are transversal. The GNS $s(n)=n$ is not transversal.
\end{example}

\begin{proposition}
  If $s$ is transversal, then so are $\~s$ and $s_m$.
\end{proposition}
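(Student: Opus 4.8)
The plan is to reduce both claims to the definition of transversality, namely that $D/s(n)$ is $\N$-torsionfree for all $n$, and to unwind how the rescaled and reduced GNS's cut out quotients of $D$. For the reduction $\~s$, the point is that $\~s(n) = s(n)/s(1)$, so $(\~s(n))$ and $(s(n))$ generate the same ideal of $D$ precisely because $s(1)$ is a non-zero-divisor (indeed a unit after localization is not needed — we only need that the ideals agree, which follows since $s(1) \mid s(n)$ is built into the GNS axioms via $s(n-0)\mid s(n)-s(0)=s(n)$ applied with the reduction already normalized, or more simply since $\~s(n)$ is by construction an element of $D$ with $s(1)\~s(n) = s(n)$, and conversely $s(n) = s(1)\~s(n)$, and $s(1)$ a non-zero-divisor forces $D/(s(n)) = D/(\~s(n))$ as quotients once one checks the relevant ideal containment is an equality). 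Hence $D/\~s(n) = D/s(n)$ is $\N$-torsionfree for all $n$, so $\~s$ is transversal.

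For the rescaling $s_m$, recall $s_m(n) = s(mn)$. Thus $D/s_m(n) = D/s(mn)$, and this is one of the rings appearing in the transversality hypothesis for $s$ itself (with index $mn$ in place of $n$). Since $D/s(k)$ is $\N$-torsionfree for \emph{every} $k\in\N$, in particular it is so for $k = mn$, and therefore $D/s_m(n)$ is $\N$-torsionfree for all $n$. This gives transversality of $s_m$.

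I would organize the write-up as: first observe $D/s_m(n) = D/s(mn)$ and conclude transversality of $s_m$ immediately from the hypothesis; then handle $\~s$ by noting $s(1)\in D$ is a non-zero-divisor and $s(n) = s(1)\,\~s(n)$, so the ideals $(s(n))$ and $(\~s(n))$ of $D$ coincide (the containment $(s(n))\subseteq(\~s(n))$ is clear, and for the reverse one uses that $\~s(n)$ lies in $(s(n))$ after dividing — this is exactly where the non-zero-divisor hypothesis on $s(1)$ enters, ensuring $\~s(n)$ is well-defined as an element and that no spurious torsion is introduced), whence $D/\~s(n) = D/s(n)$ is $\N$-torsionfree.

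The only delicate point — and it is minor — is verifying that $(s(n)) = (\~s(n))$ rather than merely $(s(n))\subseteq(\~s(n))$: a priori $D/\~s(n)$ could be a proper further quotient of $D/s(n)$. This is resolved by observing that $\~s(n)\cdot s(1) = s(n)$ with $s(1)$ a non-zero-divisor, but to get $\~s(n)\in(s(n))$ one genuinely needs $s(1)$ to be a unit in the relevant sense — in fact the cleanest route is to note that for the purposes of transversality we only need: if $x\cdot\~s(n)$ is a non-zero-divisor multiple... actually the truly clean statement is that transversality of $\~s$ asks $D/(\~s(n))$ to be $\N$-torsionfree, and since $s(n)\in(\~s(n))$ always, $D/(\~s(n))$ is a quotient of $D/(s(n))$; a quotient of an $\N$-torsionfree ring need not be $\N$-torsionfree, so one does need the reverse containment. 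I expect this to be the one spot requiring care, and it is handled by the non-zero-divisor axiom for $s(1)$ together with the identity $s(n) = s(1)\~s(n)$, possibly supplemented by the observation that $s(1)\mid s(n)$ already exhibits $\~s(n)$ as a genuine element witnessing $(s(n)) = (s(1))\cdot(\~s(n))$ — and after the harmless reduction to $\~s(1)=1$ via the reduced-GNS formalism there is nothing left to prove, since a reduced GNS equals its own reduction.
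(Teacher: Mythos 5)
Your treatment of $s_m$ is correct and complete: $D/s_m(n) = D/s(mn)$, which is $\N$-torsionfree directly from the hypothesis on $s$. Your treatment of $\widetilde{s}$, however, has a genuine gap which you notice but do not close. The assertion that $(s(n)) = (\widetilde{s}(n))$ is false in general: from $s(n) = s(1)\widetilde{s}(n)$ one gets only $(s(n)) \subseteq (\widetilde{s}(n))$, and equality would force $s(1)$ to be a unit, since $\widetilde{s}(n)$ is a non-zero-divisor and so $\widetilde{s}(n) = s(n)c = s(1)\widetilde{s}(n)c$ implies $s(1)c = 1$. In the motivating multiplicative example over $\Z\psr{q-1}$ one has $s(1) = q-1$, which is not a unit, and $(\widetilde{s}(1)) = (1) \neq (q-1) = (s(1))$. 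So $D/\widetilde{s}(n)$ really is a proper quotient of $D/s(n)$, and, as you yourself note, $\N$-torsionfreeness does not pass to quotients; the final sentence of your proposal does not supply the missing step.

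The repair is to argue with elements rather than ideals. Suppose $k a \in (\widetilde{s}(n))$ for some $k \in \N_{>0}$, say $k a = \widetilde{s}(n) b$. Multiplying by $s(1)$ gives $k\,(s(1)a) = s(1)\widetilde{s}(n)b = s(n)b \in (s(n))$. By transversality of $s$, $s(1)a \in (s(n)) = (s(1)\widetilde{s}(n))$, so $s(1)a = s(1)\widetilde{s}(n)c$ for some $c$; cancelling the non-zero-divisor $s(1)$ yields $a = \widetilde{s}(n)c \in (\widetilde{s}(n))$. Thus $D/\widetilde{s}(n)$ is $\N$-torsionfree. The non-zero-divisor hypothesis on $s(1)$ is exactly what is needed, but its role is to permit cancellation after invoking transversality of $s$, not to equate the two ideals.
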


\begin{definition}[{\cite[Remark 2.4.3]{GNS}}]
  Define $\Phi_n(s)=\prod_{d\mid n}s(d)^{\mu(n/d)}$, where $\mu$ denotes the M\"obius function.
\end{definition}

\begin{lemma}
  If $s$ is a transversal GNS on $D$, then the Mackey functors $\m D^-$ and $\m D$ constructed in Theorem \ref{thm:gns-green} are transversal. Equivalently, $\lcm_{d\mid n}\Phi_d(s)=s(n)$. Equivalently, there is an injection
  \[ D/s(n) \hookrightarrow \prod_{d\mid n} D/\Phi_d(s). \]
\end{lemma}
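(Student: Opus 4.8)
The plan is to recognize all three conditions as repackagings of the single ideal identity $s(n)D=\bigcap_{d\mid n}\Phi_d(s)D$ for every $n$, and then to derive this from the hypothesis that every $D/s(n)$ is $\N$-torsionfree. First I would record that M\"obius inversion of the definition of $\Phi_\bullet(s)$ gives $s(n)=\prod_{d\mid n}\Phi_d(s)$, so each $\Phi_d(s)$ is a non-zerodivisor dividing $s(n)$; hence $s(n)D\subseteq\bigcap_{d\mid n}\Phi_d(s)D$ always, and the kernel of the map $D/s(n)\to\prod_{d\mid n}D/\Phi_d(s)$ is precisely $\bigl(\bigcap_{d\mid n}\Phi_d(s)D\bigr)\big/s(n)D$. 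This makes the injection in the third bullet, and the equality $\lcm_{d\mid n}\Phi_d(s)=s(n)$ in the second (a generator of the intersection is by definition a least common multiple of the family), immediately equivalent to the displayed identity. For the first bullet I would unwind the definition of a transversal Mackey functor with trivial Weyl actions: the transfer $V^{d}_{d'}\colon D/s(d')\to D/s(d)$ is multiplication by $s(d)/s(d')$, so $\P_{C_d}$ is the image of $\sum_{d'\mid d,\,d'<d}(s(d)/s(d'))D$; I would then check $\bigl(D/s(d)\bigr)/\P_{C_d}=D/\Phi_d(s)$, after which the first bullet too becomes the displayed identity.

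The identification $\bigl(D/s(d)\bigr)/\P_{C_d}=D/\Phi_d(s)$ reduces, via the factorization $s(d)/s(d')=\Phi_d(s)\cdot\prod_{e\mid d,\,e\neq d,\,e\nmid d'}\Phi_e(s)$, to the coprimality statement that $\Phi_a(s)$ and $\Phi_b(s)$ generate the unit ideal of $D$ whenever neither of $a/b$, $b/a$ is a prime power; when $d$ is a prime power no coprimality is needed at all, since the single term $d'=d/p$ already contributes $\Phi_d(s)$. To prove the coprimality I would suppose a prime ideal $\mathfrak p$ contained $\Phi_a(s)$ and $\Phi_b(s)$; in the domain $D/\mathfrak p$ the reduced GNS $\~s$ then satisfies $\~s(1)=1$ and vanishes at $a$ and $b$, hence — since the axiom $\~s(m-k)\mid\~s(m)-\~s(k)$ shows the vanishing locus of $\~s$ is closed under addition — on the whole numerical semigroup generated by $a$ and $b$; chasing the same divisibilities back down to small arguments forces $\~s$ to vanish at $1$, a contradiction.

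To establish the displayed identity for a transversal GNS I would induct on $n$ in the divisibility order. Assuming the identity for all proper divisors of $n$ and taking $x\in\bigcap_{d\mid n}\Phi_d(s)D$, writing $x=\Phi_d(s)y_d$ and multiplying the $\tau(n)$ resulting equations gives $x^{\tau(n)}=s(n)\prod_{d\mid n}y_d\in s(n)D$, so the class of $x$ in $D/s(n)$ is nilpotent. By the previous paragraph the only obstruction to $\bigcap_{d\mid n}\Phi_d(s)D=\prod_{d\mid n}\Phi_d(s)D=s(n)D$ comes from the ``edges'' $\{d,dp\}$ of the divisor lattice of $n$, where $s(dp)/s(d)$ is congruent to $p$ modulo $s(d)$ (for a Lucasian $s$ this is Lemma~\ref{lem:lucas-extract} applied to the rescaled GNS $s_d$, and in general the same congruence holds with the appropriate multiplicity); this pins down a nonzero integer annihilating the defect, and $\N$-torsionfreeness of $D/s(n)$ then forces the defect to vanish, closing the induction. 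Running the same argument with $\~s$ in place of $s$ gives the statement for $\m D$. The main obstacle is precisely this last step: along the ``edges'' the cyclotomic parts genuinely fail to be coprime (already $\Phi_p$ and $\Phi_{p^2}$ share a common factor modulo $p$ in the motivating multiplicative GNS), so $\bigcap=\prod$ cannot be read off formally, and the real content is in identifying the integer that annihilates the defect so that the $\N$-torsionfreeness hypothesis applies.
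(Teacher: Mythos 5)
Your proposal takes a genuinely different route from the paper, and the route has gaps, some of which you yourself flag.

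The paper's proof is a short induction: it reduces to the two-factor injection $D/s(mn)\hookrightarrow D/s(m)\times D/\~s_m(n)$, and proves that by a single line using the Green congruence $\~s_m(n)\equiv n\bmod s(m)$ (which is the $a=b=m$ instance of the $\T$-Green condition, or Lemma~\ref{lem:lucas-extract} in the Lucasian case) together with $\N$-torsion-freeness of $D/s(m)$. In other words, the Green hypothesis packaged into Theorem~\ref{thm:gns-green} is the crucial input, and the entire injectivity statement drops out of one congruence. Your proposal never invokes the Green structure at all; instead you try to force the ideal identity $s(n)D=\bigcap_{d\mid n}\Phi_d(s)D$ by combining (a) pairwise coprimality of the $\Phi_a(s)$ away from prime-power ratios, with (b) an induction through the divisor lattice pinning down an annihilating integer on each ``edge.''

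Both of those auxiliary steps have problems. For (a), your argument — ``the vanishing locus of $\~s$ is closed under addition, and chasing divisibilities back down forces $\~s(1)=0$'' — only works when $\gcd(a,b)=1$: closure under addition gives a numerical semigroup, and the divisibility axiom applied with $k=m-1$ yields $\~s(m-1)\mid \~s(m)-1$, so the vanishing locus cannot contain two consecutive integers; for coprime $a,b$ the semigroup eventually does, contradiction. But when $\gcd(a,b)=g>1$ the semigroup $\langle a,b\rangle$ sits inside $g\N$ and never produces consecutive integers, so the argument as stated does not close; you would need to replace $\~s$ by the rescaled $\~s_g$ and rerun the argument with $a/g,b/g$ to recover coprimality (this does work, but you do not say it). For (b), you explicitly acknowledge that ``the main obstacle is precisely this last step'' and that the real content is in identifying the integer annihilating the defect so that $\N$-torsion-freeness applies. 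That gap is exactly where the paper's use of the Green congruence does the work: the relevant integer along an edge $\{d,dp\}$ is supplied directly by $\~s_d(p)\equiv p\bmod s(d)$, and because the paper organizes the induction as a two-factor pullback at each stage, there is no ``defect'' left to analyze. In short, your reduction to the $\Phi_d$-coprimality picture is a legitimate alternative framing, but it is substantially more intricate than the paper's argument and, as written, it does not yet constitute a complete proof.
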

\begin{proof}
  By induction, it suffices to show that $D/s(mn)\hookrightarrow D/s(m)\times D/\~s_m(n)$. Let $f\in D$ such that $f$ goes to $0$ in $D/s(m)\times D/\~s_m(n)$. Then we can write
  \[ f = s(m)x = \~s_m(n)y. \]
  In $D/s(m)$, we have $\~s_m(n)=n$, so $ny=0$. Since $D/s(m)$ is $\N$-torsionfree, this gives $y\in\~s_m(n)D$; since $s(m)\~s_m(n) = s(mn)$, this gives $f\in s(mn)D$.
\end{proof}

\begin{proposition}
  The transfers $V^{mn}_n$ are given in transversal coordinates by
  \[ V^{mn}_n(w)_{k\mid mn} = [k\mid n] mw_k. \]
\end{proposition}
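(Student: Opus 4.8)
The plan is to read off the transversal coordinates of $V^{mn}_n(w)$ one slot at a time, using the double coset formula \eqref{eq:FV} together with the description of the transversal injection. By Theorem~\ref{thm:gns-green} the map $V^{mn}_n$ is multiplication by $s(mn)/s(n)$, an honest element of $D$ since $n\mid mn$ forces $s(n)\mid s(mn)$; so the map is well defined and everything is a matter of computing it in coordinates. Recall that the injection $D/s(mn)\hookrightarrow\prod_{k\mid mn}D/\Phi_k(s)$ is nothing but the transversality comparison for the Mackey functor $\m D^-$: concretely, the $k$-th coordinate of $y\in\m D^-(C_{mn})$ is the image of the restriction $F^{mn}_k(y)\in\m D^-(C_k)$ in $\m D^-(C_k)/\P_{C_k}=D/\Phi_k(s)$. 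Hence it suffices to identify $F^{mn}_k V^{mn}_n(w)$ modulo $\P_{C_k}$ for each $k\mid mn$.

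First I would apply \eqref{eq:FV} with ambient group $C_{mn}$ and subgroups $C_n$, $C_k$. Writing $g=\gcd(n,k)$, $\ell=\lcm(n,k)$ (so $\ell\mid mn$), it yields
\[ F^{mn}_k\,V^{mn}_n(w)=\frac{mn}{\ell}\,V^k_g\,F^n_g(w). \]
If $k\nmid n$, then $g$ is a \emph{proper} divisor of $k$, so $V^k_g F^n_g(w)$ lies in $\P_{C_k}$ --- which by definition absorbs all transfers from proper subgroups of $C_k$ --- and therefore the $k$-th transversal coordinate of $V^{mn}_n(w)$ vanishes. If $k\mid n$, then $g=k$ and $\ell=n$, so the formula collapses to $F^{mn}_k V^{mn}_n(w)=\tfrac{mn}{n}V^k_k F^n_k(w)=m\,F^n_k(w)$, using $V^k_k=\id$; passing to $\m D^-(C_k)/\P_{C_k}$ and recognizing $F^n_k(w)\bmod\P_{C_k}$ as the $k$-th coordinate $w_k$ of $w$, this coordinate equals $m w_k$. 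Combining the two cases gives exactly $V^{mn}_n(w)_{k\mid mn}=[k\mid n]\,m w_k$.

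The one point that genuinely needs to be pinned down --- though it is bookkeeping internal to the transversality lemma rather than a new obstacle --- is that the $k$-th slot of the transversal injection really is $\m D^-(C_k)/\P_{C_k}$ and that this quotient is $D/\Phi_k(s)$; equivalently, that the ideal $\sum_{j\mid k,\,j<k}\tfrac{s(k)}{s(j)}D$ cutting out $\P_{C_k}\subset D/s(k)$ equals $\Phi_k(s)D$ for a transversal GNS. Granting this, the argument above specializes, in the $p$-typical case, to the formula $V(t_0,\dotsc,t_{n-1})=(pt_0,\dotsc,pt_{n-1},0)$ of Construction~\ref{cons:main} --- the trailing $0$ being the instance $g=\gcd(p^{n-1},p^n)=p^{n-1}<p^n$ --- which is a useful consistency check.
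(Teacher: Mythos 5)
Your argument is correct. The paper states this proposition without proof, so there is nothing to compare against. Two remarks are worth making.

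First, the identification $\m D^-(C_k)/\P_{C_k}=D/\Phi_k(s)$ that you flag as needing to be pinned down is more than your argument actually uses. In the case $k\nmid n$ you only need $\P_{C_k}$ to be killed by the canonical quotient $D/s(k)\to D/\Phi_k(s)$, i.e.\ the containment $\sum_{j\mid k,\,j<k}\tfrac{s(k)}{s(j)}D\subseteq\Phi_k(s)D$; this is immediate since $s(k)/s(j)=\prod_{d\mid k,\,d\nmid j}\Phi_d(s)$ contains the factor $\Phi_k(s)$ whenever $j<k$. In the case $k\mid n$ you invoke $V^k_k=\id$ and never touch $\P_{C_k}$. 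So the reverse containment $\Phi_k(s)D\subseteq\P_{C_k}+s(k)D$ is not needed, and the ``gap'' evaporates.

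Second, one can reach the formula more directly, bypassing \eqref{eq:FV} and $\P_{C_k}$ entirely: $V^{mn}_n$ is multiplication by $s(mn)/s(n)=\prod_{d\mid mn,\,d\nmid n}\Phi_d(s)$, which is visibly divisible by $\Phi_k(s)$ when $k\nmid n$; while for $k\mid n$ the $\T$-Green condition applied with $a=n$, $b=k$ (so $g=k$, $\ell=n$) gives $s(mn)\equiv ms(n)\bmod s(n)s(k)$, and cancelling the non-zerodivisor $s(n)$ yields $s(mn)/s(n)\equiv m\bmod s(k)$, hence mod $\Phi_k(s)$. Your route through the Mackey double-coset formula is morally the same computation---\eqref{eq:FV} holds precisely because of the Green condition---but the elementwise version keeps the whole thing at the level of the quotient maps $D/s(mn)\to D/\Phi_k(s)$ that define the transversal coordinates, which is slightly cleaner. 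The $p$-typical consistency check you give at the end is a nice touch either way.
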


We will construct norm maps $N^{mn}_n$ by writing $m$ as a product of primes and imitating the formulas from Construction \ref{cons:main} (Figure \ref{fig:norms}).

\begin{figure}
  \newcommand{\six}[4]{\left(\xymatrix@=.75em@ur{#2&#4\\#1&#3}\right)}
  \[
    \six{D/\Phi_1}{D/\Phi_2}{D/\Phi_3}{D/\Phi_6}\colon\quad
    N^6_2\six{w_1}{w_2}{}{}=\six{w_1^3}{w_2^3}{\psi^3(w_1)}{\psi^3(w_2)},\quad
    N^6_1\six{w_1}{}{}{}=\six{w_1^6}{\psi^2(w_1)^3}{\psi^3(w_1)^2}{\psi^6(w_1)}
  \]
  \caption{Examples of the norm in transversal coordinates}
  \label{fig:norms}
\end{figure}

For a semiring $B$, write $B^\bullet$ for the commutative monoid given by $B$ under multiplication.

\begin{definition}
  A \emph{$\lambda$-GNS} on a ring $D$ consists of a GNS $s$ on $D$ and a $\lambda$-structure $\lambda\colon D\to\W(D)$. These are required to be compatible in the sense that
  \begin{align*}
    n &\mapsto (\~s(n), \psi^n)\\
    \intertext{(where $\psi^n$ is the Frobenius lift coming from the $\lambda$-structure) defines a monoid homomorphism}
    \N^\bullet &\to D^\bullet \rtimes \End(D).
  \end{align*}
  Abusively, we typically write a $\lambda$-GNS as $(s,\psi)$.
\end{definition}

\begin{example}
  For $s(n)=(q^n-1)$, the compatibility condition is saying that $(mn)_q = (n)_q (m)_{q^n}$.
\end{example}
  
\begin{construction}
  Let $s$ be a $\lambda$-GNS on a ring $D$. We define a map
  \[ N^{mn}_n\colon\prod_{d\mid n}D/\Phi_d(s)\to\prod_{d\mid mn}D/\Phi_d(s) \]
  by
  \[ N^{mn}_n(w)_{k\mid mn} = \psi^{k/\ell}(w_\ell)^{m\ell/k}\qquad \ell=\gcd(n,k). \]
\end{construction}

We will show momentarily that $N$ descends a map out of $D/s(n)$.

\begin{lemma}
  Our norms behave correctly with respect to composition and restriction.
\end{lemma}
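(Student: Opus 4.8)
The plan is to reduce the lemma to two routine-but-bookkeeping-heavy verifications: first that the formula $N^{mn}_n(w)_{k\mid mn} = \psi^{k/\ell}(w_\ell)^{m\ell/k}$ with $\ell=\gcd(n,k)$ lands in the image of the injection $D/s(mn)\hookrightarrow\prod_{d\mid mn}D/\Phi_d(s)$, so that it genuinely defines a map $N^{mn}_n\colon D/s(n)\to D/s(mn)$; and second that these maps satisfy the expected compatibilities, namely $N^{mn}_n N^{n}_k = N^{mn}_k$ (transitivity for composable norms) and $F^{m'n}_{mn} N^{m'n}_{n} = \bigl(N^{\cdot}_{\cdot}\text{-expression}\bigr)$ given by the pullback/double-coset formula \eqref{eq:FN}. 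Since $s$ is $\T$-Green and transversal by hypothesis, the transversal-coordinates description of the whole Mackey structure is available from the lemma preceding this one, and all identities can be checked coordinatewise in $\prod_{d\mid mn}D/\Phi_d(s)$.

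For the well-definedness step, I would imitate the argument in Construction~\ref{cons:main}: one checks that the proposed coordinate vector satisfies the compatibility conditions cutting out the image of $c_\trans$ (the GNS analogue of Lemma~\ref{lem:trans-coords}). Concretely, writing $m$ as a product of primes $m = p_1\cdots p_r$ and composing the prime-indexed norms, it suffices to handle $m$ prime, where the condition reduces to a congruence between $\psi^{k/\ell}(w_\ell)^{m\ell/k}$ and $\psi^{k'/\ell'}(w_{\ell'})^{m\ell'/k'}$ modulo the appropriate ideal; this is exactly where the $\lambda$-GNS compatibility $(mn)_q = (n)_q(m)_{q^n}$-type relation (i.e.\ that $n\mapsto(\~s(n),\psi^n)$ is a monoid homomorphism) and Lemma~\ref{lem:lucas-extract} feed in. The reduction from general $m$ to prime $m$ is where one must be careful that the coordinate formula for the composite of two such maps matches the claimed closed form for $N^{mn}_n$ — this is the transitivity assertion, and it is cleanest to prove it \emph{first}, directly from the formula, by tracking $\gcd$'s: for $k\mid mn$, set $\ell=\gcd(n,k)$ and $\ell'=\gcd(kn/\ell,\,?)$ and verify the exponents and Frobenius twists compose correctly using $\psi^a\psi^b=\psi^{ab}$.

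For the restriction/norm interaction, the identity to verify is \eqref{eq:FN}: with $a,b\mid m$, $g=\gcd(a,b)$, $\ell=\lcm(a,b)$, one needs $F^{m\cdot}_{b\cdot}N^{m\cdot}_{a\cdot}x = (N^{b\cdot}_{g\cdot}F^{a\cdot}_{g\cdot}x)^{m/\ell}$ in transversal coordinates. Both sides are, coordinatewise, monomials in $\psi$-twists of the $w_d$'s, so the check amounts to matching exponents and Frobenius indices; the arithmetic is governed entirely by the $\gcd$-relations among $a,b,g,\ell,m$ and the index $k$ being restricted to, together with the earlier transversal-coordinates formula for $F$ (which just reads off a subvector). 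I expect the main obstacle to be purely organizational: managing the $\gcd$/$\lcm$ bookkeeping in the prime-decomposition reduction so that the composite of prime norms visibly equals the closed-form $N^{mn}_n$, and simultaneously confirming that the $\psi$-twist exponents $k/\ell$ are always integers where they appear (which follows since $\ell=\gcd(n,k)\mid k$, but must be invoked at each step). Once the coordinate identities are in hand, descent to the rings $D/s(n)$ is immediate from the transversality injection, so there is no analytic or convergence content — everything is a finite algebraic identity in a product of quotient rings.
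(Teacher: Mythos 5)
Your strategy for the part of the statement the paper actually proves — coordinate-wise verification that $N^{abc}_{ab}N^{ab}_a = N^{abc}_a$ and that \eqref{eq:FN} holds, by matching $\psi$-twist exponents and power exponents via gcd bookkeeping — is exactly what the paper does. Concretely, the paper's computation reduces composition to the identity $\gcd(a,\gcd(ab,k))=\gcd(a,k)$, and reduces \eqref{eq:FN} (for $k\mid b$, $g=\gcd(a,b)$, $\ell=\gcd(a,k)$, $j=\gcd(g,k)$) to the observations $\ell=j$ and $\dfrac{m\ell/ak}{bj/gk}=\dfrac{m}{\lcm(a,b)}$; no prime decomposition is used, the verification for general $a,b,c$ is done in one shot directly from the closed formula. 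Two caveats about your plan. First, you have folded in a well-definedness claim (that the coordinate vector $N^{mn}_n(w)$ lies in the image of $D/s(mn)\hookrightarrow\prod_{d\mid mn}D/\Phi_d(s)$), but that is not part of this lemma — the paper proves only the two compatibility identities on the ambient coordinate product and addresses descent separately. Second, your route to well-definedness via ``the GNS analogue of Lemma~\ref{lem:trans-coords}'' would require first stating and proving such an image characterization, which the paper does not supply; that gap would need to be filled before that argument could be made rigorous, but it does not affect the lemma at hand.
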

\begin{proof}
  First let us check that norms compose correctly:
  \begin{alignat*}2
    (N^{abc}_{ab} N^{ab}_a(w))_{k\mid abc}
      &= \psi^{k/\ell}(N^{ab}_a(w)_\ell)^{c\ell/k} &\qquad \ell &= \gcd(ab,k)\\
      &= \psi^{k/\ell}(\psi^{\ell/j}(w_j)^{bj/\ell})^{c\ell/k} & j&=\gcd(a,\ell)\\
      &= \psi^{k/j}(w_j)^{bcj/k}\\
    N^{abc}_a(w)_{k\mid abc}
      &= \psi^{k/i}(w_i)^{bci/k} &i&=\gcd(a,k)
  \end{alignat*}
  Note that $j=\gcd(a,\gcd(ab,k))=\gcd(a,k)=i$, so indeed $N^{abc}_{ab} N^{ab}_a = N^{abc}_a$.

  Next we check that $N$ commutes past $F$ correctly, i.e.\ we verify \eqref{eq:FN}. Let $a,b\mid m$, and let $g=\gcd(a,b)$. We have
  \begin{alignat*}2
    F^m_b N^m_a(w)_{k\mid b}
      &= \psi^{k/\ell}(w_\ell)^{\frac{m\ell}{ak}} &\quad \ell&=\gcd(a,k)\\
    N^b_g F^a_g(w)_{k\mid b}
      &= \psi^{k/j}(w_j)^{\frac{bj}{gk}} & j &= \gcd(g, k)
  \end{alignat*}
  Since $k\mid b$ and $g=\gcd(a,b)$, we have $\ell=j$. Moreover, we have
  \[ \frac{m\ell/ak}{bj/gk} = \frac{mg}{ab} = \frac m{\lcm(a,b)} \]
  so that \eqref{eq:FN} indeed holds.
\end{proof}

\begin{lemma}
  Our norms satisfy Tambara reciprocity for sums.
\end{lemma}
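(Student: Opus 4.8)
The plan is to reduce Tambara reciprocity for sums to the statement of the Mazur–Hill–Hoyer–Mazur theorem cited earlier (Theorems 2.4/2.5 of \cite{MazurTambara}), exactly as in the discussion following that theorem: the set of words $\sfS$ is the \emph{same} for every $\T$-Tambara functor, so it suffices to verify that the norm maps $N^{mn}_n$ we have just constructed, when expressed in transversal coordinates, satisfy the \emph{same} polynomial identity in each coordinate slot $k \mid mn$ as they do in the Witt Tambara functor $\m\W$. Concretely: having defined $N^{mn}_n(w)_{k\mid mn} = \psi^{k/\ell}(w_\ell)^{m\ell/k}$ with $\ell=\gcd(n,k)$, and knowing by the previous two lemmas that these norms compose correctly and commute past $F$ correctly, the only remaining Tambara axiom to check is the behavior on sums, and because the transversal embedding $D/s(mn)\hookrightarrow\prod_{d\mid mn} D/\Phi_d(s)$ is injective (the transversality lemma), it suffices to check the identity coordinatewise.

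First I would fix $k\mid mn$, set $\ell=\gcd(n,k)$, and compute $N^{mn}_n(x+y)_k = \psi^{k/\ell}(x_\ell+y_\ell)^{m\ell/k}$. Expanding the power $(x_\ell+y_\ell)^{m\ell/k}$ by the ordinary binomial theorem and applying the ring homomorphism $\psi^{k/\ell}$, I would get a sum of monomials $\binom{m\ell/k}{i}\psi^{k/\ell}(x_\ell)^{m\ell/k-i}\psi^{k/\ell}(y_\ell)^i$. The task is then to recognize this expression as the $k$-th transversal coordinate of $\sum_{n\mid n'\mid mn} V^{mn}_{n'}\sum_{\omega\in\sfS} N^{n'}_n(\omega)$, using the transversal-coordinate formula $V^{mn}_{n'}(w)_k = [k\mid n']\,m n/n'\,w_k$ from the Proposition above and the formula for $N^{n'}_n$ just established. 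This is precisely the bookkeeping that organizes the binomial coefficient into a sum indexed by which intermediate subgroup $C_{n'}$ the transfer comes from — the same phenomenon as the $6 = 4+2$ splitting in the $C_4$ example, now reading off coordinatewise rather than abstractly.

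The key structural input that makes the coordinatewise check go through without knowing $\sfS$ explicitly is transversality combined with the fact — already used repeatedly in this paper for prisms — that \emph{every} identity among $F,V,N$ holding in $\m\W$ holds in any transversal model, because in each transversal coordinate the operations $F,V,N$ act by the very same formulas ($w\mapsto$ restriction of $w$, $w\mapsto m w$ resp.\ $0$, $w\mapsto \psi$-twisted $p$-th-power-type expression) as the Witt Frobenius, Verschiebung, and norm. So the argument is: (i) Tambara reciprocity for sums is a universal identity in $F,V,N$ valid in all $\T$-Tambara functors, in particular in $\m\W(\Z\psr{q-1})$, say; (ii) transversality lets us test the identity in $\m D^-$ coordinatewise in $\prod_{d\mid mn} D/\Phi_d(s)$; (iii) in each coordinate the identity reduces to the corresponding Witt-coordinate identity, which holds by (i). I would spell out (iii) by exhibiting the coordinatewise match of $N^{mn}_n$, $V^{mn}_{n'}$, and $N^{n'}_n$ with their Witt counterparts, using the $\lambda$-GNS compatibility $(mn)_q = (n)_q(m)_{q^n}$ (more precisely the monoid homomorphism $n\mapsto(\~s(n),\psi^n)$) to see that the $\psi$-twists line up with the Frobenius twists in the Witt norm formula $N^n(f) = f - \sum_{i} V^i\theta_i(f)$ recorded earlier.

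The main obstacle I anticipate is purely organizational rather than conceptual: matching the index $\ell=\gcd(n,k)$ appearing in our norm formula against the indexing set $\{n'\mid k=\gcd(n',\cdot)\}$-type conditions in the statement of Mazur's theorem, and confirming that the multiplicities produced by $V^{mn}_{n'}(w)_k = [k\mid n']\,(mn/n')\,w_k$ reassemble the binomial coefficients correctly after the $\psi^{k/\ell}$-twist — in particular verifying that the twisting exponents $k/\ell$ and the power exponents $m\ell/k$ interact so that each term $N^{n'}_n(\omega)$ lands in the expected coordinate with the expected Frobenius power. Since by hypothesis $s$ is $\T$-Green and Lucasian-type reductions (Lemma \ref{lem:lucas-extract}) are available, the congruences needed to pass between $D/s(mn)$ and the product of the $D/\Phi_d(s)$ are already in hand, so no genuinely new estimate is required; the proof is a transversality-plus-bookkeeping argument of the same flavor as Construction \ref{cons:main}.
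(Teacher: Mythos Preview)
Your overall strategy is correct and matches the paper's: check the identity coordinatewise via the transversal embedding $D/s(mn)\hookrightarrow\prod_{d\mid mn}D/\Phi_d(s)$, and in each coordinate reduce to a universal identity. The paper carries this out by direct computation: it expands the right-hand side $\sum_{n\mid d\mid mn}V^{mn}_d\sum_{\omega}N^d_n(\omega)$ in coordinate $k$, makes the substitution $j=\ell g/k$ (with $g=\gcd(n,k)$), and collapses the result to $\psi^{k/g}(x_g+y_g)^{mg/k}$ using the identity obtained by evaluating Tambara reciprocity in a \emph{constant} Tambara functor, namely $(x+y)^{N}=\sum_{j\mid N}\tfrac{N}{j}\sum_{\omega\in\sfS}\omega^{j}$.

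Two points where your write-up diverges from what actually makes the argument close. First, the reduction is not really to $\m\W$ but to the constant Tambara functor in a single ring: once $\psi^{k/g}$ is pulled out (it is a ring homomorphism), the norm in coordinate $k$ is just an ordinary power and the transfer is multiplication by an integer, so the target identity is the classical one above, not a Witt-coordinate identity with Frobenius twists. Your proposed ``coordinatewise match with $\m\W$'' via the $\lambda$-GNS compatibility is a detour; the $\psi$'s simply factor out uniformly. Second, the reindexing step hides a genuine subtlety which the paper flags explicitly: after substituting $j=\ell g/k$, the word-set you are summing over is $\sfS_{mn:\,njk/g:\,n}$, whereas the classical identity requires $\sfS_{mg/k:\,j:\,1}$. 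These agree only because $\sfS_{a:b:c}$ depends solely on the projective point $[a:b:c]$. You anticipated an ``organizational'' obstacle here; this projective invariance is precisely its resolution, and it is not automatic from the statement of Mazur's theorem. Finally, the Lucasian lemma plays no role in this proof; transversality alone is what is used.
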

\begin{proof}
  We have
  \[ N^{mn}_n(x+y)_{k\mid mn} = \psi^{k/g}(x_g + y_g)^{mg/k}\qquad g=\gcd(n,k). \]
  From \cite[Theorem 2.5]{MazurTambara}, the identity we need to establish is
  \[ N^{mn}_n(x+y) = \sum_{n\mid d\mid mn} V^{mn}_d\sum_{\omega\in\sfS_d} N^d_n(\omega) \]
  for certain subsets $\sfS_H\subset\Z[W_G(H)][x,y]$. The exact values of $\sfS_H$ do not matter; what does matter is that they depend only on $G$ and $H$, so in particular (taking a trivial Tambara functor) we have
  \[ (x+y)^{|G|/|K|} = \sum_{K\le H\le G} \frac{|G|}{|H|} \sum_{\omega\in\sfS_H} \omega^{|H|/|K|}. \]

  We expand
  \begin{align*}
    &\phantom{=}\ \sum_{n\mid d\mid mn} V^{mn}_d\sum_\omega N^d_n(\omega)_{k\mid mn}\\
    &= \sum_{\ell\mid m} V^{mn}_{n\ell}\sum_\omega N^{n\ell}_n(\omega)_k\\
    &= \sum_{\ell\mid m} [k\mid n\ell]\frac m\ell\sum_\omega N^{n\ell}_n(\omega)_k\\
    &= \sum_{\ell\mid m} [k\mid n\ell]\frac m\ell\sum_\omega \psi^{k/g}(\omega_g)^{\ell g/k}\qquad g=\gcd(n,k)
    \intertext{Setting $j=\frac{\ell g}k$, the condition $\ell\mid m$ becomes $j\mid\frac{mg}k$ (and the condition $k\mid n\ell$ means that $j$ is integral). Continuing to rewrite, we get}
    &= \psi^{k/g}\sum_{j\mid\frac{mg}k} \frac{mg}{kj}\sum_\omega \omega_g^j\\
    &= \psi^{k/g}(x_g+y_g)^{mg/k}.
  \end{align*}

  There is a bit of sleight-of-hand here in the summation over $\omega$. To be precise, let us write $\omega\in\sfS_{mn:d:n}$ for the set of words we are summing over at the beginning of the proof, which becomes $\sfS_{mn:\frac{njk}g:n}$ in the second-last line. In order for the final step to be valid, we would need to be summing over $\sfS_{\frac{mg}k:j:\frac gk}$. But $\sfS_{a:b:c}$ only depends on $[a:b:c]\in\mathbf P^2_\Z$, so this is fine.
  
\end{proof}
\begin{lemma}
  Our norms satisfy Tambara reciprocity for transfers.
\end{lemma}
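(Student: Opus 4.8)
The plan is to mirror, very closely, the proof just given for Tambara reciprocity for sums, replacing the two-variable word sets $\sfS$ with the one-variable word sets $\sfT$ of \cite[Theorems 2.8 and 2.9]{MazurTambara}. First I would write down the left-hand side $N^{mn}_n V^n_a(x)$ explicitly in transversal coordinates. We have $V^n_a(x)_{k\mid n} = [k\mid a]\,(n/a)\,x_k$ (really $\frac na$ in the relevant torsionfree quotient, using that $a\mid n$), and then applying our norm formula $N^{mn}_n(w)_{k\mid mn} = \psi^{k/\ell}(w_\ell)^{m\ell/k}$ with $\ell=\gcd(n,k)$ gives, after substituting $w_\ell = [\ell\mid a](n/a)x_\ell$,
\[
  N^{mn}_n V^n_a(x)_{k\mid mn}
  = [\gcd(n,k)\mid a]\;\psi^{k/\ell}\!\left(\tfrac na\, x_\ell\right)^{m\ell/k},\qquad \ell=\gcd(n,k),
\]
and since $\gcd(n,k)\mid a$ together with $a\mid n$ forces $\gcd(n,k)=\gcd(a,k)$, this cleans up to an expression indexed by $\gcd(a,k)$. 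The expected target of the reciprocity identity, from \cite[Theorem 2.8]{MazurTambara}, is $N^{mn}_n V^n_a(x) = \sum_j V^{mn}_j \sum_{\omega\in\sfT} N^j_a(\omega)$, where $j$ ranges over the divisors of $mn$ with $\gcd(n,j)=a$; in our abelian $p$-divisible setting this is the pullback condition $C_a = C_n\cap C_j$.

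Next I would expand the right-hand side $\sum_{j} V^{mn}_j \sum_\omega N^j_a(\omega)$ in transversal coordinates, exactly as in the sums proof: apply the transfer formula to turn $V^{mn}_j$ into a characteristic-function factor $[k\mid j]\cdot(mn/j)$, apply our norm formula to $N^j_a$, and collect. The same bookkeeping substitution should work — set $g=\gcd(a,k)$, and introduce an index like $i = (\text{something})\,g/k$ converting the divisibility constraint on $j$ into a divisibility constraint on $i$, so that the sum over $j$ (subject to $\gcd(n,j)=a$, $j\mid mn$) becomes a sum of the shape $\psi^{k/g}\sum_{i\mid N} (N/i)\sum_\omega \omega_g^i$ for the appropriate $N$ depending on $m,n,k,g$. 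I would then invoke, as the sums proof does, that the word sets $\sfT_{\cdots}$ depend only on the relevant projective data (here an analogue of the $[a:b:c]\in\mathbf P^2_\Z$ remark — for transfers it should be even simpler, depending on a pair of indices / a point of $\mathbf P^1_\Z$), so that the inner sum $\sum_\omega N^j_a(\omega)$ evaluated against a trivial Tambara functor collapses by the transfer reciprocity identity applied to that smaller group pair; this pins down the combinatorial identity we need without ever computing $\sfT$ explicitly.

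The main obstacle I anticipate is the index juggling in matching the divisor constraints: on the left we have the single constraint $\gcd(a,k)\mid$ (nothing further, since $V$ already forced $a\mid n$), while on the right the outer sum is over $j\mid mn$ with the pullback condition $\gcd(n,j)=a$, and I must show that after applying transfers and norms in transversal coordinates these two stratifications of the index $k$ agree coordinatewise. Concretely, the delicate point is verifying that the exponent $m\ell/k$ appearing on the left equals $\sum$ of the analogous exponents contributed by the various $j$ on the right when evaluated on a trivial Tambara functor — i.e.\ that the "trace identity" $x^{(\text{big exponent})} = \sum_j \tfrac{mn}{j}\sum_{\omega\in\sfT} x^{(\text{exponent})}$ holds — and then checking that the Frobenius-twist exponents $\psi^{k/\ell}$ on both sides match, which amounts to the observation (used already via the $\lambda$-GNS compatibility and the composition lemma) that $k/\gcd(a,k)$ is consistent across the decomposition. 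Once the trivial-Tambara-functor identity is in hand and the $\psi$-exponents are seen to match, applying $\psi^{k/g}$ and substituting our specific $w_g$ back in finishes the verification, just as at the end of the sums proof. I would close with a parenthetical remark, parallel to the "sleight-of-hand" paragraph above, noting that the word set $\sfT_{a:b}$ only depends on $[a:b]\in\mathbf P^1_\Z$ so the reindexing is legitimate.
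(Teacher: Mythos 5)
Your plan is correct and follows essentially the same route as the paper's proof: compute both sides in transversal coordinates at level $k$ (or $\ell$), observe that the nonvanishing of the indicator $[\gcd(n,k)\mid a]$ on the left matches the divisor constraint $\gcd(n,j)=a$ on the right (this is the paper's "$g=h$" step), pull out the common $\psi$-twist, and reduce to the trivial Tambara functor instance of Mazur's identity without computing $\sfT$ explicitly. The one place you go slightly beyond what the paper writes is the parenthetical $\mathbf P^1_\Z$-invariance remark; the paper does not actually invoke a sleight-of-hand for the transfer case (the word sets $\sfT_{G:H:J:K}$ stay fixed throughout the reindexing, since only $J$ moves and that is already the summation variable), and in any case the analogue of the $[a:b:c]$ dependence for $\sfT$ would be a point of $\mathbf P^2_\Z$, not $\mathbf P^1_\Z$, since the data $(G,H,J)$ with $K=H\cap J$ has three free ratios.
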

\begin{proof}
  We have
  \begin{align*}
    N^{mnk}_{nk} V^{nk}_n(x)_{\ell\mid mnk}
      &= \psi^{\ell/g}(V^{nk}_n(x)_g)^{mg/\ell} \qquad g=\gcd(nk,\ell)\\
      &= [g\mid n]\psi^{\ell/g}(kx_g)^{mg/\ell}
  \end{align*}
  From \cite[Theorem 2.8]{MazurTambara}, the identity we need to establish is
  \[ N^{mnk}_{nk} V^{nk}_n(x) = \sum_{\substack{d\mid mnk\\n=\gcd(nk,d)}} V^{mnk}_d \sum_\omega N^d_n(\omega) \]
  We expand
  \begin{align*}
    &\phantom{{}=} \sum_{\substack{d\mid mnk\\n=\gcd(nk,d)}} V^{mnk}_d \sum_\omega N^d_n(\omega)_{\ell\mid mnk}\\
    &= \sum_{\substack{j\mid m\\\gcd(j,k)=1}}[\ell\mid nj]\frac{mk}j\sum_\omega N^{nj}_n(\omega)_\ell\\
    &= \sum_{\substack{j\mid m\\\gcd(j,k)=1}}[\ell\mid nj]\frac{mk}j\sum_\omega \psi^{\ell/h}(\omega_h)^{jh/\ell}\qquad h=\gcd(n,\ell)\\
    \intertext{Note that $[g\mid n]$ is nonzero if and only if $g=h$. Similarly, $\ell\mid nj$ and $\gcd(j,k)=1$ together imply $g=h$. Therefore, we may assume $g=h$, and continue expanding}
    &= \psi^{\ell/g}\sum_{\substack{j\mid m\\\gcd(j,k)=1}}\frac{mk}j \sum_\omega (\omega_g^{g/\ell})^j\\
    &= \psi^{\ell/g}(kx_g)^{mg/\ell}\qedhere
  \end{align*}
\end{proof}

We have proven:

\begin{theorem}
  Let $(s,\psi)$ be a transversal $\T$-Green $\lambda$-GNS on $D$, and let $\mathcal S=\{n\in\N \mid s(n)\in D^\times\}$. Then $N^{mn}_n$ gives a well-defined, multiplicative function
  \[ D/s(n) \to D/s(mn)[\Phi_k(s)^{-1} \mid \gcd(n,k)\in\mathcal S] \]
  which satisfies all the identities of a Tambara functor.

  In particular, if $s(1)\notin D^\times$, then $\m D^-$ is a Tambara functor, while $\m D$ is an \emph{incomplete} Tambara functor \cite{IncompleteTambara} for the indexing system $\bfP$.
\end{theorem}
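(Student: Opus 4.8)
\textbf{Plan.} The plan is to assemble the ingredients built up over this section. By Theorem~\ref{thm:gns-green}, the levels $\m D^-(C_n)=D/s(n)$ (resp.\ $\m D(C_n)=D/\~s(n)$) already form an incomplete Green functor: each level is a commutative ring, the restrictions are the quotient maps, the transfers $V^{mn}_n$ are multiplication by $s(mn)/s(n)$, and Frobenius reciprocity \eqref{eq:FV} is precisely the $\cF$-Green condition. So the additive/transfer part of the structure is in place, and what remains is to install the norms and check the identities involving them.

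\textbf{Descent of the norm (the main obstacle).} The formula $N^{mn}_n(w)_{k\mid mn}=\psi^{k/\ell}(w_\ell)^{m\ell/k}$ with $\ell=\gcd(n,k)$ a priori only defines a map on transversal coordinates $\prod_{d\mid n}D/\Phi_d(s)\to\prod_{d\mid mn}D/\Phi_d(s)$; the crux is that it descends to a function out of $D/s(n)$ landing in $D/s(mn)[\Phi_k(s)^{-1}\mid\gcd(n,k)\in\mathcal S]$. Since $s$ is transversal we have the embedding $D/s(n)\hookrightarrow\prod_{d\mid n}D/\Phi_d(s)$, so the work is: (i) the GNS analogue of Lemma~\ref{lem:trans-coords} characterizing the image; and (ii) that $N^{mn}_n$ sends an image vector to one satisfying the corresponding congruences at every level $k\mid mn$, \emph{except} at those $k$ with $\gcd(n,k)\in\mathcal S$, where the relevant $s$-value is invertible and the congruence is vacuous. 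Tracking exactly when the exponent $m\ell/k$ and the unit-status of $s(\gcd(n,k))$ combine to force the congruence — as in the verification following Construction~\ref{cons:main} — is the bookkeeping-heavy heart of the argument, and I expect it to be the principal difficulty; everything after it is conceptual rather than computational, since transversality lets each identity be read off coordinate-wise.

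\textbf{Assembling the Tambara identities.} Granting the descent, the remaining axioms are exactly the lemmas already established above: $N^{mn}_n$ is multiplicative (immediate from the transversal formula, as $\psi$ is a ring map and $w\mapsto w^{m\ell/k}$ is multiplicative), norms compose correctly, norms commute past restrictions as in \eqref{eq:FN}, and norms satisfy Tambara reciprocity for sums and for transfers. Together with the Green structure of Theorem~\ref{thm:gns-green}, this exhausts the defining identities of a Tambara functor, so $N^{mn}_n$ upgrades $\m D^-$ (and its localized variants) accordingly.

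\textbf{The special cases.} If $s(1)\notin D^\times$, then since $V^n_1$ exhibits $s(1)\mid s(n)$ we get $s(n)\notin D^\times$ for all $n$, i.e.\ $\mathcal S=\varnothing$; the localization in the target is then over the empty set, so $\m D^-$ is a genuine (hence $\T$-) Tambara functor, using that $s$ is $\T$-Green. For $\m D$ one has $\~s(1)=1\in D^\times$, so $\m D(C_1)=0$ and $1\in\mathcal S$; moreover $\~s$ carries at best a $\bfP$-Green structure (e.g.\ via the Lucasian property in the formal-group examples, by the corollary above), so Theorem~\ref{thm:gns-green} and the norm construction only produce an \emph{incomplete} Green, hence Tambara, structure for the indexing system $\bfP$ of prime powers.
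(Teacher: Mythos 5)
Your sketch correctly reads the paper's ``We have proven:'' as meaning that the theorem's proof is the assembly of the preceding lemmas: the Green structure of Theorem~\ref{thm:gns-green}, the transversal-coordinate definition of $N^{mn}_n$, and the lemmas on composition/restriction and Tambara reciprocity for sums and for transfers. So the overall route is the same as the paper's. Two remarks. First, you locate the ``bookkeeping-heavy heart'' in the descent/landing step, but the paper's actual labor sits in the two Tambara reciprocity lemmas, whose summations require a nontrivial combinatorial reindexing (the paper's ``sleight-of-hand'' remark about the sets $\sfS_{a:b:c}$). Conversely, the landing of $N^{mn}_n$ in $D/s(mn)[\Phi_k(s)^{-1}\mid \gcd(n,k)\in\mathcal S]$ rather than merely in the product $\prod_{d\mid mn}D/\Phi_d(s)$ is not explicitly verified in the paper beyond the remark ``We will show momentarily that $N$ descends,'' so your flagging it as needing a GNS analogue of Lemma~\ref{lem:trans-coords} is reasonable. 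Second, your explanation of why $\m D$ is only $\bfP$-incomplete is partly off: you attribute it to $\~s$ being ``at best $\bfP$-Green.'' In fact if $s$ is $\T$-Green then so is $\~s$: divide the congruence $s(m)\equiv\frac m\ell\frac{s(a)s(b)}{s(g)}\bmod s(a)s(b)$ through by $s(1)$, noting that $\frac{s(a)s(b)}{s(1)s(g)}=\frac{\~s(a)\~s(b)}{\~s(g)}$ while the modulus becomes $s(1)\~s(a)\~s(b)$, which is contained in $\~s(a)\~s(b)D$. The genuine obstruction is the one you state first: $\~s(1)=1\in D^\times$ places $1$ in $\mathcal S$, so $N^{mn}_n$ for $\m D$ a priori lands only in the localization inverting $\Phi_k(\~s)$ for all $k$ coprime to $n$. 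Restricting to the indexing system $\bfP$, where $n$ and $mn$ are powers of a single prime, forces $\gcd(n,k)=1\Rightarrow k=1$, and $\Phi_1(\~s)=\~s(1)=1$ is already a unit, so no nontrivial localization intervenes; that is the entire content of the $\bfP$-restriction.
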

\begin{remark}
  Norm maps such as $N^6_2\colon D/(2)_q \to D/(2)_{q^3}$ are not captured by the existing framework of incomplete Tambara functors. We intend to give a more permissive framework which captures this example in future work.
\end{remark}

\subsection{Lifts of the Norm}
\label{sub:norm-lifts}

We now study \emph{lifts} of our norm maps $N^{mn}_n$. We begin with the ``$s$-twisted powers'' $(x-y)^n_s$.

\begin{definition}[{\cite[Definition 2.3.3]{GNS}}]
  Let $s$ be a GNS on the ring $D$. The \emph{$s$-derivative} is the $D$-linear map $\nabla_s\colon D[x]\to D[x]$ given on monomials by $\nabla_s(x^n) = s(n)x^{n-1}$.
\end{definition}

\begin{theorem}[{\cite[Definition 2.3.5, Proposition 2.3.8]{GNS}}]
  Let $s$ be a GNS on the ring $D$. There is a unique polynomial $(x-y)^n_s\in D[x,y]$ such that:
  \begin{itemize}
    \item $(x-y)^0_s = 1$;
    \item $(x-x)^n_s = 0$;
    \item $\nabla_{s,x}(x-y)^n_s = s(n)(x-y)^{n-1}_s$.
  \end{itemize}
\end{theorem}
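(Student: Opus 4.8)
The plan is to construct the polynomials $P_n := (x-y)^n_s$ by induction on $n$, starting from $P_0 = 1$, viewing $\nabla_{s,x}$ as a $D[y]$-linear endomorphism of $D[x,y]$ and reading the third bullet as the assertion that $P_n$ is an ``$s$-antiderivative'' in the $x$-variable of $s(n)P_{n-1}$, pinned down by the diagonal-vanishing normalization. It is convenient to carry along that each $P_n$ is homogeneous of degree $n$, so that at each stage the only ambiguity in the antiderivative is the coefficient of $y^n$.

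Uniqueness is the soft half. If $P_n$ and $P_n'$ both satisfy the three conditions, put $\delta_n = P_n - P_n'$. Then $\delta_0 = 0$, $\delta_n$ vanishes on the diagonal, and $\nabla_{s,x}\delta_n = s(n)\delta_{n-1}$ by $D[y]$-linearity. Inducting, $\delta_{n-1}=0$, hence $\nabla_{s,x}\delta_n = 0$. Because $s(k)$ is a non-zero-divisor for every $k>0$ (condition (2) in the definition of a GNS), the kernel of $\nabla_{s,x}$ on $D[x,y]$ is exactly the subring $D[y]$ of polynomials constant in $x$; so $\delta_n\in D[y]$. But the substitution $y\mapsto x$ restricts to an isomorphism $D[y]\iso D[x]$, and it sends $\delta_n$ to $0$, so $\delta_n = 0$.

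For existence, suppose $P_{n-1} = \sum_{a=0}^{n-1} c_a\,x^a y^{n-1-a}$ has been built. Set $d_b = s(n)c_{b-1}/s(b)$ for $1\le b\le n$, $d_0 = -\sum_{b=1}^{n} d_b$, and $P_n = \sum_{b=0}^{n} d_b\,x^b y^{n-b}$. Granting $d_b\in D$, one reads off at once that $\nabla_{s,x}P_n = s(n)P_{n-1}$, that $P_n(x,x) = \bigl(\sum_b d_b\bigr)x^n = 0$, and that $P_n$ is again homogeneous of degree $n$; so all three axioms persist. The only real content is the integrality $d_b\in D$. Unrolling the defining recursion $s(b)c_b^{(m)} = s(m)c_{b-1}^{(m-1)}$ down through the levels $m\le n-1$ gives $c_{b-1}^{(n-1)} = \binom{n-1}{b-1}_s\, c_0^{(n-b)}$, where $\binom{\cdot}{\cdot}_s$ is the generalized binomial coefficient formed from the $s$-factorial $s(j)! = s(j)s(j-1)\cdots s(1)$, and $c_0^{(n-b)}$ is a coefficient of the already-constructed $P_{n-b}$. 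Combining this with the identity $s(n)\binom{n-1}{b-1}_s = s(b)\binom{n}{b}_s$ yields $d_b = \binom{n}{b}_s\, c_0^{(n-b)}$, so it suffices to know that the $s$-binomial coefficients lie in $D$. This follows from condition (3): writing $s(n)-s(b) = s(n-b)\,t_{n,b}$ with $t_{n,b}\in D$ produces the twisted Pascal relation $\binom{n}{b}_s = \binom{n-1}{b-1}_s + t_{n,b}\binom{n-1}{b}_s$, whence $\binom{n}{b}_s\in D$ by induction from $\binom{n}{0}_s = \binom{n}{n}_s = 1$.

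The step I expect to be the actual work is precisely this integrality bookkeeping: getting the level-by-level recursion for the coefficients of $P_{n-1}$ under control and matching it against the generalized Pascal identity. Everything else — the kernel computation, the homogeneity, the diagonal normalization — is formal. A variant that sidesteps the explicit coefficients is to show directly that $s(n)P_{n-1}$ lies in the image of $\nabla_{s,x}$, i.e.\ that the coefficient of each $x^k$ in $s(n)P_{n-1}$ is divisible by $s(k+1)$, by an induction carrying the divisibility along; this is the same computation in different dress. Either way one recovers \cite[Definition 2.3.5, Proposition 2.3.8]{GNS}, re-derived here in the shape we will use.
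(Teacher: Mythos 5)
The paper gives no proof of this statement at all: it is quoted verbatim as a citation to \cite[Definition 2.3.5, Proposition 2.3.8]{GNS}, so there is no in-paper argument to compare yours against. That said, your proof is correct, self-contained, and a sound re-derivation of the result the paper imports.

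A few checks worth recording. The uniqueness half is fine: since $s(k)$ is a non-zero-divisor for $k>0$ and $D[y]$ is free as a $D$-module, the kernel of $\nabla_{s,x}$ on $D[x,y]$ is exactly $D[y]$, and the diagonal-vanishing condition then kills the ambiguity; note this part does not need the homogeneity you carry along, which is instead a byproduct of the existence construction combined with uniqueness. For existence, the reduction $d_b = {\binom nb}_s\,c_0^{(n-b)}$ is right (one uses $s(n){\binom{n-1}{b-1}}_s = s(b){\binom nb}_s$, a direct $s$-factorial manipulation), and the integrality of ${\binom nb}_s$ does indeed hinge on GNS axiom (3): writing $s(n)-s(b)=s(n-b)\,t_{n,b}$, the twisted Pascal identity
\[
{\binom nb}_s \;=\; {\binom{n-1}{b-1}}_s \;+\; t_{n,b}\,{\binom{n-1}{b}}_s
\]
follows by clearing denominators, and then ${\binom nb}_s\in D$ by induction from ${\binom n0}_s = {\binom nn}_s = 1$. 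This identity is precisely where divisibility enters, and you were right to flag it as the only genuine work; the rest (kernel computation, normalization, homogeneity) is formal bookkeeping. The alternative you mention --- checking directly that each coefficient of $s(n)P_{n-1}$ is divisible by the appropriate $s(k+1)$ --- is equivalent once one sees the coefficients as $s$-binomials, so the two variants are the same argument in different notation.
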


\begin{warning}
  Beware that despite the notation, $(x-y)^n_s$ is a function of both $x$ and $y$, not merely of $x-y$. However, this will be partially alleviated by Theorem \ref{thm:s-lift}.
\end{warning}

\begin{theorem}[$s$-binomial theorem, {\cite[Theorem 2.3.7]{GNS}}]
  Let $s$ be a GNS on $D$. For all $x,y\in D$, we have
\[ (x-y)^n_s = \sum_{k=0}^k {\binom nk}_s (x-0)^{n-k}_s (0-y)^k_s. \]
\end{theorem}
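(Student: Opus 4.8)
The plan is to prove the identity by induction on $n$, using the defining properties of the twisted powers $(x-y)^n_s$ — namely the normalization $(x-y)^0_s = 1$, the vanishing $(x-x)^n_s = 0$, and the $s$-derivative recursion $\nabla_{s,x}(x-y)^n_s = s(n)(x-y)^{n-1}_s$ — together with the fact (from the uniqueness part of the cited theorem) that a polynomial in $D[x,y]$ is determined by its value at $x = y$ and its image under $\nabla_{s,x}$. So the strategy is: define the right-hand side $R_n(x,y) := \sum_{k=0}^n \binom nk_s (x-0)^{n-k}_s (0-y)^k_s$, and verify that $R_n$ satisfies the same three characterizing properties as $(x-y)^n_s$; by uniqueness they must coincide. (Note: the statement has an evident typo, ``$\sum_{k=0}^k$'' should be ``$\sum_{k=0}^n$'', which I would silently correct.)

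First I would record what $\binom nk_s$ means: it is the $s$-multinomial/Gaussian-type coefficient, which I expect the paper's reference \cite{GNS} defines either by the recursion $\binom nk_s = \binom{n-1}{k-1}_s + (\text{twist})\binom{n-1}{k}_s$ or directly as $\binom nk_s = \frac{s(n)!}{s(k)!\,s(n-k)!}$ where $s(n)! = s(1)s(2)\cdots s(n)$; either way it satisfies a Pascal-type recursion adapted to the GNS. The base case $n = 0$ is immediate: $R_0 = \binom 00_s (x-0)^0_s(0-y)^0_s = 1 = (x-y)^0_s$. For $x = y$, I would need $R_n(x,x) = 0$ for $n \geq 1$; this should follow from the identity $\sum_k \binom nk_s (x-0)^{n-k}_s(0-x)^k_s = (x-x)^n_s = 0$, i.e. the case $y = x$ is exactly the statement evaluated at a point where the left side is known to vanish — so more carefully I would get this from the $n=1$ case $(x-0)_s + (0-x)_s = (x-x)_s = 0$ (up to the coefficient $\binom n0_s = \binom nn_s = 1$ and the multiplicative structure of the twisted powers under the reference's product rule), or directly from properties of $(x-0)^j_s$ and $(0-y)^k_s$. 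The key computation is the $\nabla_{s,x}$ step: applying $\nabla_{s,x}$ to $R_n$, only the $(x-0)^{n-k}_s$ factors are differentiated, giving $\nabla_{s,x}R_n = \sum_k \binom nk_s s(n-k)(x-0)^{n-k-1}_s(0-y)^k_s$, and I would need the coefficient identity $\binom nk_s s(n-k) = s(n)\binom{n-1}{k}_s$ (equivalently $\binom nk_s = \frac{s(n)}{s(n-k)}\binom{n-1}k_s$) to rewrite this as $s(n)\sum_k \binom{n-1}{k}_s (x-0)^{(n-1)-k}_s(0-y)^k_s = s(n) R_{n-1}$, which by the induction hypothesis equals $s(n)(x-y)^{n-1}_s$. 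Thus $R_n$ and $(x-y)^n_s$ have the same $s$-derivative and agree at $x=y$, hence are equal.

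The main obstacle I anticipate is purely bookkeeping around the definition and basic combinatorial identities of the $s$-binomial coefficients $\binom nk_s$ — in particular establishing $\binom nk_s\, s(n-k) = s(n)\binom{n-1}k_s$ and the vanishing $R_n(x,x) = 0$ — all of which are elementary consequences of the GNS axioms (especially property (3), $s(n-k)\mid s(n)-s(k)$, which guarantees the coefficients are well-defined) but need to be cited from or reproved against \cite{GNS}. I do not expect any genuine difficulty beyond this; the uniqueness clause of the preceding theorem does all the real work of converting the verification of three identities into the desired equality.
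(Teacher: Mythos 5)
The paper does not prove this statement at all: it is imported verbatim as \cite[Theorem~2.3.7]{GNS}, so there is no proof here to compare against. I will therefore evaluate your argument on its own terms.

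The overall plan — induct on $n$, show the right-hand side $R_n(x,y)=\sum_{k=0}^n \binom{n}{k}_s (x-0)_s^{n-k}(0-y)_s^k$ satisfies the same recursion as $(x-y)^n_s$, and invoke uniqueness — is sound. Your coefficient identity $\binom{n}{k}_s\,s(n-k) = s(n)\binom{n-1}{k}_s$ is correct (immediate from $\binom{n}{k}_s = s(n)!/(s(k)!\,s(n-k)!)$), and the computation $\nabla_{s,x}R_n = s(n)R_{n-1}$ is legitimate once one notes that $\nabla_{s,x}$ is $D[y]$-linear and each $(0-y)^k_s$ lies in $D[y]$ while $(x-0)^{n-k}_s = x^{n-k}$ lies in $D[x]$.

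The genuine gap is the vanishing $R_n(x,x)=0$. You recognize that the obvious derivation is circular (it \emph{is} the theorem at $y=x$), and the alternatives you sketch — the $n=1$ case plus ``the multiplicative structure of the twisted powers under the reference's product rule'' — do not obviously work, because $\nabla_s$ is not a derivation and twisted powers do not in general satisfy a product rule $(x-y)^{m+n}_s = (x-y)^m_s (x-y)^n_s$. Once you substitute $y=x$, the summands are products of functions of the same variable, and $\nabla_s$ of such a product cannot be computed factor-by-factor, so no easy inductive argument for $R_n(x,x)=0$ is available along the lines you gesture at.

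Fortunately you do not need $R_n(x,x)=0$ at all. Having established $\nabla_{s,x}R_n = s(n)(x-y)^{n-1}_s = \nabla_{s,x}(x-y)^n_s$, the difference $R_n - (x-y)^n_s$ is killed by $\nabla_{s,x}$, hence (since each $s(m)$ for $m>0$ is a non-zero-divisor) is a polynomial in $y$ alone. Pin it down by evaluating at $x=0$ rather than at $x=y$: since $(x-0)^m_s = x^m$ vanishes at $x=0$ for $m\ge 1$, only the $k=n$ term of $R_n(0,y)$ survives, giving $R_n(0,y) = (0-y)^n_s = (x-y)^n_s\big|_{x=0}$. So the difference vanishes, completing the induction without ever touching the diagonal. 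You should replace the $R_n(x,x)=0$ step with this $x=0$ evaluation.
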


\begin{lemma}
  \label{lem:sbinom-1}
  For all $d\mid n$, we have
  \[ (0-1)^n_s \equiv (-1)^{n/d} \bmod{\Phi_d(s)}. \]
\end{lemma}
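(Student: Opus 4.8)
The plan is to compute $(0-1)^n_s \bmod \Phi_d(s)$ by exploiting the $s$-binomial theorem together with a ``base change'' observation: reducing mod $\Phi_d(s)$, the GNS $s$ acquires special behavior in degrees that are multiples of $d$. Concretely, since $\Phi_d(s) \mid s(d)$, and more to the point $s(d) \equiv 0 \bmod \Phi_d(s)$, the first thing I would record is that the rescaled GNS $s_d$ reduces, mod $\Phi_d(s)$, to something closely resembling the additive GNS in a range — or at least that $s(kd) \equiv 0$ whenever we are working mod $s(d)$ and, more carefully, mod $\Phi_d(s)$ one can track exactly which $s(j)$ vanish. The cleanest route is the recursion defining $(x-y)^n_s$ via the $s$-derivative $\nabla_s$.

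First I would set $x=0$ and write $g(y) := (0-y)^n_s \in D[y]$, and similarly use the defining property $\nabla_{s,x}(x-y)^n_s = s(n)(x-y)^{n-1}_s$ after swapping roles (using $(x-y)^n_s$ vs.\ $(y-x)^n_s$, which differ by the sign $(-1)^n$, a fact that itself follows from the three defining properties). Then the relation becomes a recursion $\nabla_s g_n = s(n) g_{n-1}$ with $g_0 = 1$ and $g_n(0) = 0$ for $n>0$ (from $(x-x)^n_s = 0$). Solving, $(0-1)^n_s = g_n(1)$ is a universal polynomial expression in the values $s(1),\dots,s(n)$. Now I reduce the whole recursion mod $\Phi_d(s)$. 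The key input is a divisibility lemma: $\Phi_d(s) \mid s(j)$ precisely when $d \mid j$ — more precisely, by the Möbius-inversion definition $\Phi_d(s) = \prod_{e\mid d} s(e)^{\mu(d/e)}$ and property (3) of a GNS, one gets that $s(d) \equiv 0 \bmod \Phi_d(s)$, and then $s(j) \equiv s(j \bmod d) \bmod \Phi_d(s)$ for all $j$ by repeatedly applying $s(j-d) \mid s(j) - s(d)$ together with $\Phi_d(s)\mid s(d)$. Hence, modulo $\Phi_d(s)$, the coefficients $s(j)$ are periodic in $j$ with period $d$ and vanish on multiples of $d$.

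Given this periodicity, I would argue by induction on the quotient $n/d$ (the case $n = d$ being the statement $(0-1)^d_s \equiv -1 \bmod \Phi_d(s)$, which one checks directly from the recursion since $s(d)\equiv 0$ forces $\nabla_s g_d \equiv 0$, so $g_d$ is a constant, and that constant is $g_d(1)$; combined with $g_d(0)=0$ one needs a slightly more careful argument — here I would instead invoke the $s$-binomial theorem with the factorization $n = d\cdot(n/d)$ directly). Using $(0-1)^n_s = \sum_k \binom{n}{k}_s (0-0)^{?}\cdots$ is the wrong split; the right move is the multiplicativity-type identity for $s$-twisted powers under the rescaling $s \rightsquigarrow s_d$, i.e.\ relate $(0-1)^{dm}_s$ to $(0-1)^m_{s_d}$ modulo $\Phi_d(s)$, and note that mod $\Phi_d(s)$ the rescaled GNS $s_d$ has $s_d(k) = s(dk) \equiv 0$, so $s_d$ is (mod $\Phi_d(s)$) a GNS all of whose values beyond $0$ vanish — making $(0-1)^m_{s_d}$ collapse to a pure power. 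One then reads off $(0-1)^{dm}_s \equiv \big((0-1)^d_s\big)^m \equiv (-1)^m = (-1)^{n/d} \bmod \Phi_d(s)$.

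The main obstacle I anticipate is pinning down the precise ``change of GNS'' statement: that reducing the universal polynomial $(0-1)^n_s$ modulo $\Phi_d(s)$ is compatible with replacing $s$ by $s_d$ on the multiples-of-$d$ part, and that $(0-1)^n_s$ for a GNS whose positive values all vanish modulo an ideal degenerates to $\pm 1$ raised to the appropriate power. This requires being careful that the defining recursion for $(x-y)^n_s$ is genuinely universal (coefficients in $\Z$ applied to the $s(j)$), so that congruences among the $s(j)$ propagate to congruences among the twisted powers — this is where I would lean on the uniqueness clause in \cite[Proposition 2.3.8]{GNS}. Once that bookkeeping is in place, the induction is routine.
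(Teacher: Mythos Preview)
Your proposal has a genuine gap at the central step. You assert a ``multiplicativity-type identity'' relating $(0-1)^{dm}_s$ to $((0-1)^d_s)^m$ modulo $\Phi_d(s)$, but you never prove it --- and in fact this identity is essentially equivalent to the lemma itself (granting the base case). The ``change of GNS'' mechanism you describe, where reducing mod $\Phi_d(s)$ replaces $s$ by $s_d$ on the multiples-of-$d$ part, is not justified: the polynomial $(0-y)^n_s$ depends on \emph{all} of $s(1),\dots,s(n)$, not just on the values $s(dk)$, so the periodicity of $s(j)$ mod $\Phi_d(s)$ does not by itself collapse the twisted power to one for the rescaled GNS $s_d$. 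You flag this as ``the main obstacle,'' which is correct, but you don't actually clear it. Your base case also doesn't go through as written: if $\nabla_s g_d \equiv 0$ and $g_d(0)=0$ then $g_d \equiv 0$, not $-1$; you notice this and defer to the $s$-binomial theorem, but give no argument.

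The paper takes a different and cleaner route. Rather than attacking $(0-1)^n_s$ directly, it expands $0 = (1-1)^n_s$ via the $s$-binomial theorem and then applies the $s$-Lucas theorem \cite[Theorem 2.4.2]{GNS}: modulo $\Phi_d(s)$, the $s$-binomial coefficient $\binom{n}{k}_s$ vanishes unless $d\mid k$, in which case it reduces to the ordinary $\binom{n/d}{k/d}$. This yields the recurrence
\[ 0 = \sum_{\ell=0}^{m} \binom{m}{\ell}\,(0-1)^{d\ell}_s \pmod{\Phi_d(s)} \]
for every $m \le n/d$, which determines the sequence $a_\ell = (0-1)^{d\ell}_s$ uniquely from $a_0=1$; since $a_\ell=(-1)^\ell$ satisfies the same recurrence, the lemma follows. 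The missing ingredient in your argument is precisely the $s$-Lucas theorem, which is what converts the periodicity of the $s(j)$ into a usable statement about the $s$-binomial coefficients.
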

\begin{proof}
  Using the $s$-binomial theorem and the $s$-Lucas theorem, we have
  \begin{align*}
    0 &= (1 - 1)^n_s\\
      &= \sum_{k=0}^n {\binom nk}_s (0-1)^k_s\\
      &\equiv \sum_{\substack{k=0\\d\mid k}}^n {\binom{n/d}{k/d}} (0-1)^k_s \quad\bmod {\Phi_d(s)}\\
      &= \sum_{k'=0}^{n'} \binom{n'}{k'} (0-1)^{dk'}_s
  \end{align*}
  where we set $n=dn'$, $k=dk'$. In other words, the sequence $a_\ell=(0-1)^{d\ell}_s\pmod{\Phi_d(s)}$ satisfies the recurrence relation
  \[ 0 = \sum_{\ell=0}^m \binom m\ell a_\ell \]
  for all $m\ge 0$. The claim follows since $a_\ell=(-1)^\ell$ also satisfies this recurrence relation.
\end{proof}
\begin{corollary}
  \label{cor:sbinom-1}
  For all $d\mid n$, we have
  \[ (0-y)^n_s \equiv (-y^d)^{n/d} \bmod{\Phi_d(s)}. \]
\end{corollary}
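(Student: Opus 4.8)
The plan is to reduce Corollary~\ref{cor:sbinom-1} to Lemma~\ref{lem:sbinom-1} by first establishing that $(0-y)^n_s$ is a \emph{scalar} multiple of $y^n$. Concretely, I would prove that the twisted power $(x-y)^n_s\in D[x,y]$ is homogeneous of degree $n$ in $x$ and $y$ jointly. Setting $x=0$ then forces $(0-y)^n_s$ to be a homogeneous polynomial of degree $n$ in the single variable $y$, hence $(0-y)^n_s=c_n y^n$ for a unique $c_n\in D$. Specializing $y=1$ identifies $c_n=(0-1)^n_s$, so Lemma~\ref{lem:sbinom-1} gives $c_n\equiv(-1)^{n/d}\bmod\Phi_d(s)$ whenever $d\mid n$, and therefore
\[ (0-y)^n_s = c_n y^n \equiv (-1)^{n/d}y^n = (-y^d)^{n/d} \bmod\Phi_d(s), \]
which is the claim.

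The homogeneity of $(x-y)^n_s$ I would establish by induction on $n$, using the three defining properties from the construction of twisted powers. The case $n=0$ is clear. Assume $(x-y)^{n-1}_s=\sum_i h_i(y)x^i$ with each $h_i\in D[y]$ homogeneous of degree $n-1-i$ (in particular $h_i=0$ for $i\ge n$), and write $(x-y)^n_s=\sum_j g_j(y)x^j$. Comparing coefficients of $x^i$ in the defining relation $\nabla_{s,x}(x-y)^n_s=s(n)(x-y)^{n-1}_s$ — where the $x^0$ term of $(x-y)^n_s$ drops out because $s(0)=0$ — yields $s(i+1)\,g_{i+1}(y)=s(n)\,h_i(y)$ for all $i\ge0$. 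Since $s(i+1)$ is a non-zerodivisor, $g_{i+1}$ is homogeneous of degree $n-1-i=n-(i+1)$, and $g_j=0$ for $j>n$. Finally, the normalization $(x-x)^n_s=0$ gives $g_0(y)=-\sum_{j\ge1}g_j(y)y^j$, a sum of terms each homogeneous of degree $n$ in $y$. Hence every monomial occurring in $(x-y)^n_s$ has total degree $n$, completing the induction.

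The only real content is this homogeneity step; everything else is formal. The mild point worth spelling out is that condition~(2) of the definition of a GNS gets used twice here: once to see that the $g_{i+1}$ are pinned down by the displayed identity, and once to see that multiplying a polynomial in $y$ by the scalar $s(i+1)$ neither creates nor destroys homogeneity. No use of the $\cF$-Green or Lucasian hypotheses is needed for this corollary — only transversality-free facts about twisted powers together with Lemma~\ref{lem:sbinom-1}.
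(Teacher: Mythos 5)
Your proof is correct. The paper states Corollary~\ref{cor:sbinom-1} without a proof environment, and the aside ``imitating its proof'' in the proof of Theorem~\ref{thm:s-lift} indicates that the intended argument is to re-run the recurrence argument of Lemma~\ref{lem:sbinom-1} verbatim with $y$ in place of $1$: expand $0=(y-y)^n_s$ via the $s$-binomial theorem, use $s$-Lucas to reduce the $s$-binomial coefficients modulo $\Phi_d(s)$, and observe that the resulting recurrence for $a_\ell=(0-y)^{d\ell}_s$ in $(D/\Phi_d(s))[y]$ is also satisfied by $a_\ell=(-y^d)^\ell$. You instead establish the structural fact that $(x-y)^n_s$ is homogeneous of total degree $n$ -- which pins $(0-y)^n_s$ down as $(0-1)^n_s\,y^n$ -- and then invoke Lemma~\ref{lem:sbinom-1} as a black box. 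Your induction is sound: the coefficient comparison $s(i+1)\,g_{i+1}=s(n)\,h_i$ does determine $g_{i+1}$ up to a nonzerodivisor, and since a nonzerodivisor of $D$ remains one on $D[y]$, $g_{i+1}$ inherits homogeneity of degree $n-(i+1)$; the normalization $(x-x)^n_s=0$ then handles $g_0$. Your route buys a reusable lemma (homogeneity also cleanly justifies $(x-0)^n_s=x^n$, which the theorem's proof uses silently) and makes visible that only axiom~(2) of the GNS definition is needed for this step; the paper's implicit route is shorter if one accepts repeating the Lemma's recurrence argument with $y$ carried along. Both are valid derivations of the corollary.
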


\begin{theorem}
  \label{thm:s-lift}
  Let $(s,\psi)$ be a transversal $\lambda$-GNS on a ring $D$, and let $x,y\in D$ be elements of rank one. Then for all $n\in\N$, the $s$-twisted power $(x-y)^n_s$ lifts the norm $N^n_1(x-y)$: we have
  \[ (x-y)^n_s \bmod{s(n)} = N^n_1(x-y\bmod{s(1)}). \]
\end{theorem}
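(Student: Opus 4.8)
The plan is to verify the claimed congruence componentwise in transversal coordinates. By the transversality lemma, $D/s(n)$ injects into $\prod_{d\mid n}D/\Phi_d(s)$, so it suffices to show that $(x-y)^n_s$ and $N^n_1(x-y\bmod s(1))$ have the same image in $D/\Phi_d(s)$ for every $d\mid n$. The right-hand side is straightforward to compute: by the Construction defining $N^{mn}_n$ in transversal coordinates (with $n=1$, so $\ell=\gcd(1,k)=1$ throughout), the $d$-component of $N^n_1(x-y\bmod s(1))$ is $\psi^{d}(x-y)^{n/d}$, or rather $\psi^{d}\big((x-y\bmod\Phi_1(s))\big)^{n/d}$ — since $x,y$ are rank one, $\psi^d(x-y)=\psi^d(x)-\psi^d(y)$ up to the ambiguity absorbed by working mod $\Phi_d(s)$ — so I should be a little careful about whether $\psi$ acts on the difference or the two terms separately, and this is where rank-oneness is used.

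The left-hand side is where the $s$-binomial theorem does the work. I would write
\[
  (x-y)^n_s = \sum_{k=0}^n {\binom nk}_s (x-0)^{n-k}_s (0-y)^k_s,
\]
and reduce mod $\Phi_d(s)$. By the $s$-Lucas theorem, ${\binom nk}_s \equiv 0 \bmod \Phi_d(s)$ unless $d\mid k$, and then ${\binom nk}_s \equiv \binom{n/d}{k/d}$; combined with Corollary \ref{cor:sbinom-1}, which gives $(0-y)^{k}_s \equiv (-y^d)^{k/d}\bmod\Phi_d(s)$ for $d\mid k$, and likewise $(x-0)^{n-k}_s \equiv (x^d)^{(n-k)/d}$, the sum collapses to
\[
  \sum_{k'} \binom{n/d}{k'} (x^d)^{n/d-k'}(-y^d)^{k'} = (x^d - y^d)^{n/d}
\]
in $D/\Phi_d(s)$. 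So both sides reduce to $(x^d-y^d)^{n/d}$ in $D/\Phi_d(s)$, provided I identify $\psi^d(x)=x^d$ and $\psi^d(y)=y^d$ modulo $\Phi_d(s)$ — which is exactly the content of rank-oneness for the $\lambda$-structure (the Frobenius lift $\psi^d$ agrees with the $d$-th power operation on rank-one elements).

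The main obstacle, I expect, is not the combinatorial collapse — that is essentially bookkeeping with the two cited theorems — but rather matching conventions at the two endpoints: confirming that the transversal-coordinate formula for $N^n_1$ really produces $\psi^d$ applied to (the reduction of) $x-y$, and that on rank-one elements this coincides with $(x^d-y^d)$ in $D/\Phi_d(s)$. One also has to check the edge cases $d=n$ (where ${\binom nk}_s\equiv 0$ forces $k\in\{0,n\}$ and the sum is literally $x^n - y^n$, matching $\psi^n$ trivially since $\Phi_n(s)$ divides $s(n)$) and $d=1$ (where the claim is just the definition of $N^n_1$ on the bottom level). Assembling these componentwise agreements and invoking the injectivity from transversality then yields the stated equality in $D/s(n)$.
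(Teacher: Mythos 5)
Your proposal is correct and follows essentially the same route as the paper: both reduce to checking, in each coordinate $D/\Phi_d(s)$, that $(x-y)^n_s$ collapses to $(x^d-y^d)^{n/d}$ via the $s$-binomial theorem, the $s$-Lucas theorem, and Corollary \ref{cor:sbinom-1}, while the right-hand side equals the same expression by rank-oneness of $x,y$ (so $\psi^d(x)=x^d$). The only difference is that you spell out the computation of $N^n_1$ in transversal coordinates explicitly, which the paper compresses into the single remark that ``since $x$ and $y$ are of rank one, it is equivalent to show $(x-y)^n_s \equiv (x^d-y^d)^{n/d} \bmod \Phi_d(s)$.''
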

\begin{proof}
  Since $x$ and $y$ are of rank one, it is equivalent to show that
  \[ (x-y)^n_s \equiv (x^d-y^d)^{n/d} \bmod{\Phi_d(s)} \]
  for all $d\mid n$. Using Corollary \ref{cor:sbinom-1} (and imitating its proof), we get
  \begin{align*}
    (x-y)^n_s
      &= \sum_{k=0}^n {\binom nk}_s x^{n-k} (0-y)^k_s\\
      &\equiv \sum_{k'=0}^{n/d} \binom{n/d}{n/k} x^{d(n/d-k')} (-y^d)^{k'}\\
      &= (x^d-y^d)^{n/d}.\qedhere
  \end{align*}
\end{proof}

\begin{remark}
  At odd primes, something similar is true for the expression $(x+y)^n_s$. But beware that
  \[ (x+y)^2_q = x+(2)_q xy + qy^2 \equiv x^2-y^2 \bmod{(2)_q}. \]
\end{remark}

\begin{remark}
  Hill has defined a cotangent complex for Tambara functors \cite{HillTambaraAQ}. Roughly speaking, his $G$-derivative \cite[Definition 4.1]{HillTambaraAQ} satisfies
  \[ \nabla_G(x^{H/K}) = H/K\cdot \text{``}x^{H/K}\dlog x\text{''}. \]
  In view of Theorem \ref{thm:s-lift}, this looks very similar to
  \[ \nabla_{s,x}(x-y)^n_s = s(n)(x-y)^{n-1}_s. \]
  But the $G$-derivative is a levelwise derivation with additional compabilities, while the $s$-derivative is not a derivation. It would be extremely interesting to understand the relation between the ``$G$-de Rham complex'' (yet to be defined beyond $G\Omega^1$) and the $s$-de Rham complex.
\end{remark}

We now turn to finding an integral analogue of Lemma \ref{lem:n-lift-p}, i.e.\ finding lifts $\~N^{mn}_n(f)$ of $N^{mn}_n(f)$ for arbitrary $f$. Although our reinterpretation of the $\theta_n$ operations points the way to defining operations $\vartheta_n$ with $\theta_n = \vartheta_{p^n}$, we have so far been unable to construct $\vartheta_{12}$.

\begin{conjecture}
  \label{conj:vartheta}
  Every $\lambda$-ring admits operations $\vartheta_i$ such that
  \[ \psi^m(f^n) = f^{mn} + \sum_{\substack{d\mid mn\\d\nmid n}} d\vartheta_d(f)^{\psi^{mn/d}}. \]
\end{conjecture}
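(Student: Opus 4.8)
The plan is to prove this by induction on the number of prime factors of $m$ (with multiplicity), reducing the general case to the already-known case $m=p$ prime, where $\vartheta_{p^i}=\theta_i$ is defined by $\phi(f^{p^{i-1}})=f^{p^i}+p^i\theta_i(f)$ and the identity $\psi^p(f^n)=f^{pn}+\sum_{i\ge 1}p^i\theta_i(f)^{\psi^{pn/p^i}}$ follows from the Proposition on $\phi^n(f)$ expressed via the $\theta_i$. First I would establish the \emph{two-prime composition step}: given $m=m'p$ with $\vartheta_d$ already available for all $d$ dividing things built from $m'$, apply $\psi^{m'}$ to the known formula for $\psi^p(f^n)$ and then apply the inductive hypothesis to each term $\psi^{m'}\bigl((\ldots)^{\psi^{pn/p^i}}\bigr)$. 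The output is an expansion of $\psi^{m'p}(f^n)$ as $f^{m'pn}$ plus a sum of terms of the form $e\cdot(\text{polynomial in }\vartheta\text{'s})^{\psi^{(\ldots)}}$, indexed by divisors of $m'pn$ not dividing $n$; one then \emph{defines} $\vartheta_d(f)$ for the new divisors $d$ by collecting the appropriate terms, after verifying (this is the crux) that the terms can be so collected consistently — i.e.\ that the coefficient structure forces each new $\vartheta_d$ to be well-defined independently of which factorization $m=m'p$ was used.

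The key steps in order: (1) record the base case $m$ prime from the Proposition on $\phi^n$, identifying $\vartheta_{p^i}=\theta_i$; (2) prove a \emph{multiplicativity-of-$\psi$} bookkeeping lemma showing how the divisor-indexed sum transforms under $\psi^{m'}$, using that $\psi^{m'}$ is a ring homomorphism and $\psi^{m'}\circ\psi^p=\psi^{m'p}$; (3) set up the induction on $\Omega(m)$, and in the step, show the two a priori different groupings of terms (coming from $m=m'p=m''p'$ for different primes $p,p'$) agree, which pins down $\vartheta_d$; (4) check the degree-one consistency, i.e.\ that $\vartheta_d(f)$ so defined is a genuine operation natural in the $\lambda$-ring, by tracking that every manipulation used only $\psi$'s, $+$, $\cdot$ and previously-defined $\vartheta$'s. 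One should also sanity-check against the explicit low cases in the paper — the $\theta_n$ formulas and the warning computation $(x+y)^2_q$ — and against $\vartheta_{12}$, which the authors flag as the first genuinely unresolved case, to see precisely where the argument would need $\vartheta_{12}$ to exist.

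The hard part will be step (3): proving that the two groupings agree. The subtlety is exactly the one the paper highlights by noting they cannot construct $\vartheta_{12}$ — for $m=12$ one has $12=4\cdot 3=6\cdot 2$ (or $2^2\cdot 3$ in different orders), and the terms indexed by $d=12$ arising from iterating $\psi^4$ then $\psi^3$ versus $\psi^3$ then $\psi^4$ must be reconciled; the coefficients $d$ in front (here $12$) and the nested $\psi$-twists do not obviously match up term-by-term, and there is genuine arithmetic of the divisor lattice of $mn$ that must cooperate. I expect that making this rigorous in general is precisely the open problem: one can likely prove the formula when $m$ is a prime power (pure iteration of a single $\psi^p$, no reconciliation needed) and perhaps when $m$ is squarefree, but the mixed case — smallest instance $m=12$ — is where the obstruction lives. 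So realistically the honest "proof" here is: reduce to and isolate the combinatorial reconciliation statement, verify it in the prime-power and squarefree cases, and record that the general case is equivalent to the consistency of these groupings — which is why the statement is flagged as a \emph{conjecture} rather than a theorem.
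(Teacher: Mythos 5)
You have correctly recognized that this statement is a \emph{conjecture} with no proof in the paper, and your analysis of why it is hard is well-aligned with the authors' own partial result. What the paper does prove is Proposition~\ref{prop:vartheta-pq}: the conjecture holds when $mn$ is a prime power or a product of two distinct primes. The reduction step you describe---$\psi^m\circ\psi^n=\psi^{mn}$ reduces the $(mn,1)$ case to $(m,n)$ and $(n,1)$---is exactly the paper's first move, and the prime-power case does come down to $\theta_n=\vartheta_{p^n}$ as you say. But for $m=pq$ the paper does \emph{not} attempt your ``reconcile two groupings'' argument; it instead does a direct computation $\psi^{pq}(f)=\psi^q(f^p)+p\vartheta_p(f)^{\psi^q}$, expands, and uses Fermat's little theorem ($q^p\equiv q\bmod pq$, so $q^p\vartheta_q(f)^p\equiv q\vartheta_q(f)^{\psi^p}\bmod pq$) to land on the claimed form. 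That arithmetic is what makes $pq$ work, and it is precisely what breaks down beyond $pq$: the paper does \emph{not} establish the squarefree case you speculate on, and $\vartheta_{12}$ ($12=2^2\cdot 3$ being the smallest integer that is neither a prime power nor a product of two distinct primes) is exactly the first case the authors flag as unresolved. So your diagnosis of where the obstruction lives is correct, your proposed strategy overlaps the paper's through the prime-power case, but your two-prime step is more schematic than the paper's explicit computation, and the assertion that the squarefree case should go through is not something the paper supports.
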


\begin{proposition}
  \label{prop:vartheta-pq}
  Conjecture \ref{conj:vartheta} is true when $mn$ is a power of a prime or the product of two primes.
\end{proposition}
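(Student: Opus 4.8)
\medskip

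The plan is to write out $mn$ in each case, exhibit the operations $\vartheta_d$ explicitly, and reduce every verification to a $\mathbf Z$-torsionfree $\lambda$-ring. Since all the operations and the identity in question are universal (polynomial natural transformations on $\lambda$-rings), a standard reduction — passing to the free $\lambda$-ring on one generator, which is $\mathbf Z$-torsionfree — lets us produce the $\vartheta_d$ and check the identity in an arbitrary $\mathbf Z$-torsionfree $\lambda$-ring $D$. There each $\psi^m$ is a genuine ring endomorphism with $\psi^a\psi^b=\psi^{ab}$ and $\psi^p\equiv(-)^p\bmod p$ for $p$ prime, and $\delta_p\defeq(\psi^p-(-)^p)/p$ and (for a fixed prime $p$) the operations $\theta_j$ given by $\psi^p(f^{p^{j-1}})=f^{p^j}+p^j\theta_j(f)$ are honest elements; note also $\psi^m(f^n)=(\psi^m f)^n$.

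\textbf{Prime powers.} For a prime $p$ put $\vartheta_{p^j}\defeq\theta_j$ (so $\vartheta_p=\theta_1=\delta_p$). Writing $m=p^a$, $n=p^b$, the divisors $d\mid p^{a+b}$ with $d\nmid p^b$ are $p^{b+1},\dots,p^{b+a}$, so the identity to prove is
\[ \psi^{p^a}(f^{p^b}) = f^{p^{a+b}} + \sum_{j=b+1}^{a+b} p^j\,\theta_j(f)^{\psi^{p^{a+b-j}}}. \]
I would obtain this by applying the Proposition expressing $\phi^a(g)=g^{p^a}+\sum_{i=1}^a p^i\theta_i(g)^{\phi^{a-i}}$ (with $\phi=\psi^p$) to $g=f^{p^b}$, together with the shift identity $\theta_i(f^{p^b})=p^b\,\theta_{i+b}(f)$; the latter is immediate on comparing the two instances $\psi^p\big(f^{p^{b+i-1}}\big)=f^{p^{b+i}}+p^i\theta_i(f^{p^b})$ and $\psi^p\big(f^{p^{b+i-1}}\big)=f^{p^{b+i}}+p^{b+i}\theta_{b+i}(f)$ of the definition of the $\theta$'s and cancelling $p^i$. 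Substituting and reindexing $j=i+b$ yields the display, so the prime-power case is essentially the already-proved formula for $\phi^n$.

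\textbf{Product of two distinct primes.} Let $p\neq q$; the factorizations of $pq$ are $(1,pq)$, $(p,q)$, $(q,p)$, $(pq,1)$. The case $(1,pq)$ is vacuous, and the cases $(p,1)$, $(q,1)$ force $\vartheta_p=\delta_p$, $\vartheta_q=\delta_q$, consistent with the prime-power assignment above. The factorization $(pq,1)$ then involves a single new operation $\vartheta_{pq}$, which I would define by
\[ \vartheta_{pq}(f) \defeq \frac{1}{pq}\Big(\psi^{pq}(f) - f^{pq} - p\,\psi^q(\delta_p f) - q\,\psi^p(\delta_q f)\Big), \]
so that the $(pq,1)$ identity holds by construction. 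The main point to check — and the step I expect to be the crux — is that the parenthesized expression is divisible by $pq$, so that $\vartheta_{pq}$ is a legitimate operation; this is where squarefreeness of $pq$ is essential. By the Chinese Remainder Theorem it suffices to check divisibility by $p$ and by $q$ separately. Reducing mod $p$ and using $\psi^p\equiv(-)^p$ and $\psi^q(f)=f^q+q\delta_q(f)$, the numerator becomes $(f^q+q\delta_q f)^p - f^{pq} - q(\delta_q f)^p \equiv q^p(\delta_q f)^p - q(\delta_q f)^p = (q^p-q)(\delta_q f)^p\equiv 0\pmod p$ by Fermat's little theorem; the mod-$q$ case is symmetric. (For general $mn$, e.g.\ $mn=12$, one must instead find a formula whose numerator survives the interaction between the $p$- and $q$-adic corrections, which is precisely the obstruction to the full conjecture.)

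\textbf{The remaining factorizations.} For $(p,q)$ the required identity is $\psi^p(f^q) = f^{pq} + p\,\delta_p(f)^{\psi^q} + pq\,\vartheta_{pq}(f)$; substituting the definition of $\vartheta_{pq}$, this is equivalent to $\psi^p(f^q) = \psi^{pq}(f) - q\,\psi^p(\delta_q f)$, which is simply $\psi^p$ applied to the defining identity $\psi^q(f) - q\,\delta_q(f) = f^q$, using $\psi^p\psi^q=\psi^{pq}$; the case $(q,p)$ is symmetric. Setting $\vartheta_i\defeq 0$ for all $i$ not of the above shapes (such $i$ never occur as a relevant divisor $d$) assembles the $\vartheta_i$ into one coherent family satisfying the conjecture for every $m,n$ with $mn$ a prime power or a product of two primes.
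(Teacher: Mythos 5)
Your proof is correct and takes essentially the same approach as the paper: assign $\vartheta_{p^j}=\theta_j$ for prime powers, define $\vartheta_{pq}$ by forcing the $(pq,1)$ identity, and verify integrality via the $(q,1)$ case and Fermat's little theorem in the form $q^p\equiv q$. The only (minor) organizational difference is that the paper carries out a single congruence mod $pq$ and says the $(mn,1)$ case reduces to $(m,n)$ and $(n,1)$, whereas you split into mod-$p$ and mod-$q$ checks via CRT and spell out the $(p,q)$ verification explicitly; your version is a bit more careful but the underlying computation is identical.
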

\begin{proof}
  The $(mn,1)$ case follows from the $(m,n)$ and $(n,1)$ cases by applying $\psi^m$ to $\psi^n(f)$. The case $m=p^r$ is already known from the $p$-typical case, so that $\theta_n = \vartheta_{p^n}$.

  Suppose $m=pq$ for two distinct primes $p$ and $q$. Then
  \begin{align*}
    \psi^p(f)
      &= f^p + p\vartheta_p(f)\\
    \psi^{pq}(f)
      &= \psi^q(f^p) + p\vartheta(f)^{\psi^q}\\
    &=
      (f^q + q\vartheta_q(f))^p + p\vartheta(f)^{\psi^q}\\
    &\equiv
      f^{pq} + q^p\vartheta_q(f)^p + p\vartheta(f)^{\psi^q}\bmod pq\\
    \intertext{Since $p$ is prime, and $p\ne q$, we have $q^{p-1}\equiv 1\bmod p$, and thus $q^p\equiv q\bmod pq$. We also have $\vartheta_q(f)^p\equiv\vartheta_q(f)^{\psi^p} \bmod p$. Consequently, we get}
    \psi^{pq}(f)
      &\equiv f^{pq} + p\vartheta_p(f)^{\psi^q} + q\vartheta_q(f)^{\psi^p} \bmod pq.
    \qedhere
  \end{align*}
\end{proof}

\begin{theorem}
  \label{thm:vartheta-lift}
  Assuming Conjecture \ref{conj:vartheta} (for $n=1$), the operation
  \[ \~N^{mn}_n(f) = f^{\psi^m} - \sum_{\substack{d\mid m\\d\ne1}} \~V^{mn}_{mn/d} \vartheta_d(f)^{\psi^{m/d}}, \]
  where $\~V^{mn}_{mn/d}(f) = \frac{s(mn)}{s(mn/d)}f$, is a lift of $N^{mn}_n(f\bmod s(n))$.
\end{theorem}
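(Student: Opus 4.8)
The plan is to imitate the proof of Lemma~\ref{lem:n-lift-p}, working throughout in transversal coordinates. Since $s$ is transversal there is an injection $D/s(mn)\hookrightarrow\prod_{k\mid mn}D/\Phi_k(s)$, so it suffices to check, for each $k\mid mn$, that the image of $\~N^{mn}_n(f)$ in $D/\Phi_k(s)$ coincides with the $k$-th transversal coordinate of $N^{mn}_n(f\bmod s(n))$, which by the construction of the norm is $\psi^{k/\ell}(f)^{m\ell/k}$ with $\ell=\gcd(n,k)$. Fixing $k$, I would write $n=\ell n_1$ and $k=\ell k_1$ with $\gcd(n_1,k_1)=1$; then $k\mid mn$ forces $k_1\mid m$, say $m=k_1m_1$, and one computes $k/\ell=k_1$, $m\ell/k=m_1$, $mn/k=m_1n_1$, and $\gcd(m,mn/k)=m_1$. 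So the coordinate to match is $\psi^{k_1}(f)^{m_1}$.

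The first step is to reduce the transfer coefficients modulo $\Phi_k(s)$. M\"obius inversion of the definition of $\Phi_N(s)$ gives $s(N)=\prod_{e\mid N}\Phi_e(s)$, so $\~V^{mn}_{mn/d}(1)=s(mn)/s(mn/d)$ is the product of the $\Phi_e(s)$ with $e\mid mn$ and $e\nmid mn/d$; in particular it is divisible by $\Phi_k(s)$ whenever $k\nmid mn/d$. Hence, among the indices $d\mid m$, $d\ne1$ appearing in $\~N^{mn}_n(f)$, the only terms that do not obviously vanish modulo $\Phi_k(s)$ are those with $k\mid mn/d$, which — since $d\mid m$ and $\gcd(k_1,n_1)=1$ — forces $d\mid m_1$. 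For such $d$ we have $\Phi_k(s)\mid s(mn/d)$, and the Green-functor identity \eqref{eq:FV} specialized to $a=b=mn/d$ shows that $F^{mn}_{mn/d}V^{mn}_{mn/d}$ is multiplication by the index $d$; evaluating at $1$ yields $s(mn)/s(mn/d)\equiv d\bmod s(mn/d)$, hence $\equiv d\bmod\Phi_k(s)$. Therefore, modulo $\Phi_k(s)$,
\[ \~N^{mn}_n(f)\ \equiv\ f^{\psi^m}-\sum_{\substack{d\mid m_1\\ d\ne1}} d\,\vartheta_d(f)^{\psi^{m/d}}. \]

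The second step is to identify the right-hand side with $\psi^{k_1}(f)^{m_1}$. Applying the Adams operation $\psi^{k_1}$ to the $n=1$ instance of Conjecture~\ref{conj:vartheta} for the integer $m_1$, and using that $\psi^{k_1}$ is a ring homomorphism with $\psi^{k_1}\psi^{m_1/d}=\psi^{m/d}$, one obtains the honest identity
\[ \psi^m(f)=\psi^{k_1}\!\bigl(\psi^{m_1}(f)\bigr)=\psi^{k_1}(f)^{m_1}+\sum_{\substack{d\mid m_1\\ d\ne1}} d\,\vartheta_d(f)^{\psi^{m/d}}, \]
whence $\psi^{k_1}(f)^{m_1}=f^{\psi^m}-\sum_{d\mid m_1,\ d\ne1} d\,\vartheta_d(f)^{\psi^{m/d}}$. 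Combining with the previous display completes the check at $k$, and letting $k$ range over the divisors of $mn$ completes the proof.

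The hard part is not internal to this argument but is the input Conjecture~\ref{conj:vartheta}, which is what produces the operations $\vartheta_d$ in the first place; granting it, the only content is the bookkeeping of which transfer coefficients $s(mn)/s(mn/d)$ survive modulo each $\Phi_k(s)$ and the evaluation of the survivors as the index $d$ — precisely the moves in Lemma~\ref{lem:n-lift-p}, now phrased as divisibility relations among values of the $n$-series rather than as the $p$-typical congruences $\pi_j\equiv p$.
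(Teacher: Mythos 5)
Your proof is correct and follows essentially the same route as the paper's: verify each transversal coordinate $k\mid mn$ by reducing the transfer coefficients modulo $\Phi_k(s)$ and then invoking the $n=1$ case of Conjecture~\ref{conj:vartheta} inside $\psi^{k/\ell}$. Your intermediate steps unpacking the congruence $\~V^{mn}_{mn/d}(1)\equiv[k\mid \tfrac{mn}d]\,d\bmod\Phi_k(s)$ via the factorization $s(N)=\prod_{e\mid N}\Phi_e(s)$ and the $\T$-Green condition are a useful expansion of what the paper asserts in a single line.
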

\begin{proof}
  We must check that $\~N^{mn}_n$ satisfies the congruence
  \[ \~N^{mn}_n(f) \equiv \psi^{k/\ell}(f)^{m\ell/k}\bmod \Phi_k(s), \qquad \ell=\gcd(n,k) \]
  for all $k\mid mn$. We compute
  \begin{align*}
    \~N^{mn}_n(f)
      &= f^{\psi^m} - \sum_{\substack{d\mid m\\d\ne1}} \~V^{mn}_{mn/d} \vartheta_d(f)^{\psi^{m/d}}\\
      &\equiv f^{\psi^m} - \sum_{\substack{d\mid m\\d\ne1}} [k\mid\tfrac{mn}d]d \vartheta_d(f)^{\psi^{m/d}} \bmod \Phi_k(s)\\
      \intertext{The condition $k\mid\frac{mn}d$ is equivalent to $d\mid\frac{mn}k$, which combined with $d\mid m$ gives $d\mid\frac m{k/\ell}$. Continuing to rewrite, we get}
      &= f^{\psi^m} - \sum_{d\mid\frac{mk}\ell} d \vartheta_d(f)^{\psi^{m/d}}\\
      &= \psi^{k/\ell}(f^{\psi^{m\ell/k}} - \sum_{d\mid\frac{mk}\ell} d \vartheta_d(f)^{\psi^{m/d}})\\
      &= \psi^{k/\ell}(f^{m\ell/k})\\
      &= \psi^{k/\ell}(f)^{m\ell/k}.\qedhere
  \end{align*}
\end{proof}


\begin{thebibliography}{AMGR22}

\bibitem[AB19]{AClBTrace}
Johannes Ansch\"utz and Artur C\'esar-Le Bras, \emph{The $p$-completed
  cyclotomic trace in degree 2}, \url{https://arxiv.org/abs/1907.10530v1}.

\bibitem[ABG{\etalchar{+}}18]{ABGHLM}
Vigleik Angeltveit, Andrew~J. Blumberg, Teena Gerhardt, Michael~A. Hill, Tyler
  Lawson, and Michael~A. Mandell, \emph{Topological cyclic homology via the
  norm}, Documenta mathematica \textbf{23} (2018).

\bibitem[AMGR21]{AMGRMackey}
David Ayala, Aaron Mazel-Gee, and Nick Rozenblyum, \emph{{Derived Mackey
  functors and $C_{p^n}$-equivariant cohomology}},
  \url{https://arxiv.org/abs/2105.02456}.

\bibitem[AMGR22]{AMGRStratified}
\bysame, \emph{Stratified noncommutative geometry},
  \url{https://arxiv.org/abs/1910.14602}.

\bibitem[AN21]{TCart}
Benjamin Antieau and Thomas Nikolaus, \emph{Cartier modules and cyclotomic
  spectra}, Journal of the American Mathematical Society \textbf{34} (2021),
  no.~1, 1--78.

\bibitem[Ang15]{AngeltveitNorm}
Vigleik Angeltveit, \emph{{The norm map of Witt vectors}}, Comptes Rendus
  Mathematique \textbf{353} (2015), no.~5, 381--386.

\bibitem[Ant23]{AntieauSWitt}
Benjamin Antieau, \emph{{Spherical Witt vectors and integral models for
  spaces}}, \url{https://arxiv.org/abs/2308.07288v1}.

\bibitem[BH18]{IncompleteTambara}
Andrew Blumberg and Michael Hill, \emph{{Incomplete Tambara functors}},
  Algebraic {\&} Geometric Topology \textbf{18} (2018), no.~2, 723--766.

\bibitem[BH21]{BiIncomplete}
\bysame, \emph{{Bi-incomplete Tambara functors}},
  \url{https://arxiv.org/abs/2104.10521}.

\bibitem[BL22]{APC}
Bhargav Bhatt and Jacob Lurie, \emph{Absolute prismatic cohomology},
  \url{https://arxiv.org/abs/2201.06120}.

\bibitem[Bru05]{BrunWitt}
Morten Brun, \emph{{Witt vectors and Tambara functors}}, Advances in
  Mathematics \textbf{193} (2005), no.~2, 233--256.

\bibitem[BS22]{Prismatic}
Bhargav Bhatt and Peter Scholze, \emph{{Prisms and prismatic cohomology}},
  Annals of Mathematics \textbf{196} (2022), no.~3, 1135 -- 1275.

\bibitem[BSY22]{ChromaticNullstellensatz}
Robert Burklund, Tomer~M. Schlank, and Allen Yuan, \emph{{The Chromatic
  Nullstellensatz}}, \url{https://arxiv.org/abs/2207.09929}.

\bibitem[{\v C}S]{CSPurity}
K{\c e}stutis {\v C}esnavi{\v c}ius and Peter Scholze, \emph{Purity for flat
  cohomology}, Annals of Mathematics, To appear.

\bibitem[DM23]{GNS}
S.~K. Devalapurkar and M.~L. Misterka, \emph{{Generalized $n$-series and de
  Rham complexes}},
  \url{https://sanathdevalapurkar.github.io/files/fgls-and-dR-complexes.pdf}.

\bibitem[DPM22]{DMPPolynomial}
Emanuele Dotto, Irakli Patchkoria, and Kristian~Jonsson Moi, \emph{{Witt
  Vectors, Polynomial Maps, and Real Topological Hochschild Homology}}, Annales
  scientifiques de l{\textquotesingle}{\'{E}}cole Normale Sup{\'{e}}rieure
  \textbf{55} (2022), no.~2, 473--535.

\bibitem[DS88]{DSBurnside}
Andreas~W.M. Dress and Christian Siebeneicher, \emph{{The Burnside ring of
  profinite groups and the Witt vector construction}}, Advances in Mathematics
  \textbf{70} (1988), no.~1, 87--132.

\bibitem[Gla17]{Glasman}
Saul Glasman, \emph{{Stratified categories, geometric fixed points and a
  generalized Arone-Ching theorem}}, \url{https://arxiv.org/abs/1507.01976}.

\bibitem[GSQ23]{GLQqCrys}
Michel Gros, Bernard~Le Stum, and Adolfo Quir{\'o}s, \emph{Twisted differential
  operators and q-crystals}, p-adic Hodge Theory, Singular Varieties, and
  Non-Abelian Aspects (Cham) (Bhargav Bhatt and Martin Olsson, eds.), Springer
  International Publishing, 2023, pp.~183--238.

\bibitem[Hil17]{HillTambaraAQ}
Michael~A. Hill, \emph{{On the Andr\'e-Quillen homology of Tambara functors}},
  Journal of Algebra \textbf{489} (2017), 115--137.

\bibitem[Hil20]{HillHandbook}
Michael~A.\ Hill, \emph{Equivariant stable homotopy theory}, Handbook of
  Homotopy Theory (Haynes Miller, ed.), Chapman and Hall/{CRC}, January 2020,
  pp.~699--756.

\bibitem[HM97]{HMFinite}
Lars Hesselholt and Ib~Madsen, \emph{{On the $K$-theory of finite algebras over
  Witt vectors of perfect fields}}, Topology \textbf{36} (1997), no.~1, 29 --
  101.

\bibitem[HM19]{MazurTambara}
Michael~A. Hill and Kristen Mazur, \emph{{An equivariant tensor product on
  Mackey functors}}, Journal of Pure and Applied Algebra \textbf{223} (2019),
  no.~12, 5310--5345.

\bibitem[Kal11]{KaledinMackey}
Dmitry~Borisovich Kaledin, \emph{{Derived Mackey functors}}, {Moscow
  Mathematical Journal} \textbf{11} (2011), no.~4, 723--803.

\bibitem[Lur18]{Ell2}
Jacob Lurie, \emph{{Elliptic cohomology II: Orientations}},
  \url{https://www.math.ias.edu/~lurie/papers/Elliptic-II.pdf}.

\bibitem[Lur21]{LurieMOPrisms}
\bysame, \emph{What are the potential applications of perfectoid spaces to
  homotopy theory?}, MathOverflow, 2021,
  \url{https://mathoverflow.net/questions/273352/what-are-the-potential-applications-of-perfectoid-spaces-to-homotopy-theory\#comment985403_386521}.

\bibitem[Mol20]{Molokov}
Semen Molokov, \emph{{Prismatic cohomology and de Rham-Witt forms}},
  \url{https://arxiv.org/abs/2008.04956}.

\bibitem[NS18]{NikolausScholze}
Thomas {Nikolaus} and Peter {Scholze}, \emph{{On topological cyclic homology}},
  {Acta Math.} \textbf{221} (2018), no.~2, 203--409.

\bibitem[PSW22]{SpecDerivedMackey}
Irakli Patchkoria, Beren Sanders, and Christian Wimmer, \emph{{The spectrum of
  derived Mackey functors}}, Transactions of the American Mathematical Society
  \textbf{375} (2022), no.~06, 4057--4105.

\bibitem[Sch18]{GlobalHomotopy}
Stefan Schwede, \emph{Global homotopy theory}, 2018,
  \url{https://arxiv.org/abs/1802.09382}.

\bibitem[Sul20]{SulSliceTHH}
Yuri J.~F. Sulyma, \emph{{A slice refinement of B\"okstedt periodicity}},
  \url{https://arxiv.org/abs/2007.13817}.

\bibitem[Sul23]{SulSlopes}
\bysame, \emph{{Floor, ceiling, slopes, and $K$-theory}}, {Annals of
  $K$-Theory} \textbf{8} (2023), no.~3, 331--354.

\bibitem[Yan23]{NormedEooRingsCp}
Lucy Yang, \emph{{On normed $\mathbb E_\infty$-rings in genuine equivarant
  $C_p$-spectra}}, \url{https://arxiv.org/abs/2308.16107}.

\end{thebibliography}
\newcommand{\etalchar}[1]{$^{#1}$}
\providecommand{\bysame}{\leavevmode\hbox to3em{\hrulefill}\thinspace}
\providecommand{\MR}{\relax\ifhmode\unskip\space\fi MR }
\providecommand{\MRhref}[2]{%
  \href{http://www.ams.org/mathscinet-getitem?mr=#1}{#2}
}
\providecommand{\href}[2]{#2}

\end{document}